\newcommand{\arxiv}[1]{\href{http://arxiv.org/abs/#1}{\texttt{arXiv:#1}}}
\theoremstyle{plain}
\newtheorem{theorem}{Theorem}
\newtheorem{lemma}[theorem]{Lemma}
\newtheorem{corollary}[theorem]{Corollary}
\newtheorem{proposition}[theorem]{Proposition}
\theoremstyle{definition}
\newtheorem{definition}[theorem]{Definition}
\newtheorem{example}[theorem]{Example}
\theoremstyle{remark}
\renewcommand{\emptyset}{\varnothing}
\title{Isotropic matroids I\@: Multimatroids and neighborhoods}
\author{Robert Brijder\thanks{R.B.\ is a postdoctoral fellow of the Research Foundation -- Flanders (FWO).}\\
\small Hasselt University\\[-0.8ex]
\small Belgium\\
\small\tt robert.brijder@uhasselt.be\\
\and
Lorenzo Traldi\\
\small Lafayette College\\[-0.8ex]
\small Easton, Pennsylvania, U.S.A.\\
\small\tt traldil@lafayette.edu
}
\date{}
\begin{document}

\maketitle

\begin{abstract}
Several properties of the isotropic matroid of a looped simple graph are
presented. Results include a characterization of the multimatroids that are
associated with isotropic matroids and several ways in which the isotropic
matroid of $G$ incorporates information about graphs locally equivalent to
$G$. Specific results of the latter type include a characterization of graphs
that are locally equivalent to bipartite graphs, a direct proof that two
forests are isomorphic if and only if their isotropic matroids are isomorphic,
and a way to express local equivalence indirectly, using only edge pivots.

\bigskip\noindent \textbf{Keywords:} delta-matroid, interlacement, isotropic system, local equivalence,
matroid, multimatroid, stable set

\end{abstract}

\section{Introduction}

Let $G$ be a looped simple graph, i.e., a graph in which no two edges are incident on precisely the same set of vertices. Although we allow loops, we reserve the terms \emph{adjacent} and \emph{neighbors} for pairs of distinct vertices. We do not count loops in vertex degrees, and we do not consider $v$ to be an element of the open neighborhood $N_{G}(v)=\{w \in V(G) \mid vw \in E(G)\}$, whether $v$ is looped or not. Also, in this paper the rows and columns of matrices are not ordered, but are instead indexed by some finite sets $X$ and $Y$, respectively; we refer to such a matrix as an $X \times Y$ matrix. A conventional matrix is then just a $\{1,\ldots,m\} \times \{1,\ldots,n\}$ matrix. We remark that we follow this more general convention because we will consider adjacency matrices of graphs which do not have a canonical linear ordering of their columns and rows (because graphs do not have a canonical linear ordering of their vertices). Using instead conventional matrices and fixing an arbitrary linear ordering is notationally more cumbersome due to the frequent need for permutation matrices to permute the rows and columns. In situations where there is an obvious natural bijection between the rows and columns of a matrix $A$, we may, e.g., refer to the ``diagonal'' of $A$.


Recently, the second author introduced the binary matroid represented over $GF(2)$ (the $2$-element field) by the matrix%
\[
IAS(G)=%
\begin{pmatrix}
I & A(G) & A(G)+I
\end{pmatrix}
\text{,}%
\]
where $I$ is the identity matrix and $A(G)$ is the adjacency matrix of $G$, i.e., the $V(G) \times V(G)$ binary matrix with diagonal entries equal to $1$ for looped vertices, and off-diagonal entries equal to $1$ for adjacent vertices~\cite{Tnewnew}. (For each $v \in V(G)$, the $v$ rows of $I$, $A(G)$ and $A(G)+I$ constitute the $v$ row of $IAS(G)$.) This matroid is called the \emph{isotropic matroid} of $G$, and denoted $M[IAS(G)]$. We refer to Oxley's book~\cite{O} for terminology regarding matroids; we do not repeat the definitions of basic notions like circuits, connectedness, independence, rank etc.\ except when these notions require special attention for isotropic matroids.

The purpose of the present paper is to present extensions of the discussion of isotropic matroids in \cite{Tnewnew}. Before discussing these extensions, we establish some notation and terminology. The columns of $IAS(G)$ are labeled as follows: the $v$ column of $I$ is designated $\phi_{G}(v)$, the $v$ column of $A(G)$ is designated $\chi_{G}(v)$, and the $v$ column of $I+A(G)$ is designated $\psi_{G}(v)$. The set $\{\phi_{G}(v)$, $\chi_{G}(v)$, $\psi_{G}(v)\mid v\in V(G)\}$ is denoted $W(G)$; it is the ground set of the isotropic matroid $M[IAS(G)]$. The matroid $M[IAS(G)]$ reflects algebraic interactions among the columns of $IAS(G)$: A subset of $W(G)$ is dependent (or independent, or a basis) if and only if the corresponding set of columns of $IAS(G)$ is linearly dependent (or independent, or a basis of the column space of $IAS(G)$); $M[IAS(G)]$ is the direct sum of two submatroids if and only if the two corresponding sets of columns partition $W(G)$ and their linear spans share only the zero vector; and so on. If $v\in V(G)$ then the subset $\tau_{G}(v)=\{\phi_{G}(v)$, $\chi_{G}(v)$, $\psi_{G}(v)\}$ of $W(G)$ is the \emph{vertex triple} corresponding to $v$. Note that the three columns of $IAS(G)$ corresponding to $\tau_{G}(v)$ sum to $0$, so every vertex triple is a dependent set of $M[IAS(G)]$. If $v$ is not isolated then each of the three corresponding columns of $IAS(G)$ has a nonzero entry, so $\tau_{G}(v)$ is a circuit of $M[IAS(G)]$. If $v$ is isolated, instead, then one of $\chi_{G}(v),\psi_{G}(v)$ is a loop of $M[IAS(G)]$, and the other is parallel to $\phi_{G}(v)$ in $M[IAS(G)]$.

A subset $S\subseteq W(G)$ is a \emph{subtransversal} if it contains no more than one element of each vertex triple; if $S$ contains precisely one element from each vertex triple, it is a \emph{transversal}. The families of subtransversals and transversals of $G$ are denoted $\mathcal{S}(G)$ and $\mathcal{T}(G)$, respectively. A \emph{transverse matroid} of $G$ is a submatroid obtained by restricting $M[IAS(G)]$ to a transversal; we use ``transverse matroid'' to avoid confusion with transversal matroids. A \emph{transverse circuit} of $G$ is a circuit of a transverse matroid, i.e., a subtransversal that is a circuit of $M[IAS(G)]$.

In Section~\ref{sec:char_isotr_mm} we provide natural abstract properties that characterize isotropic matroids and related structures. Indeed, given a binary matroid $M$ and a partition $\Omega$ of the ground set of $M$ into sets of cardinality $3$, one may wonder what the essential properties of $M$ are such that $(M,\Omega)$ is of the form $(M[IAS(G)],W(G))$ for some graph $G$. To elegantly expose these essential properties, we use the theory of multimatroids.

We explain the connection between isotropic matroids and multimatroids using the notion of a sheltering matroid, which was mentioned in passing by Bouchet~\cite{B3}. A sheltering matroid is a matroid such that its set of transverse matroids forms a multimatroid. In particular, an isotropic matroid is a 3-sheltering matroid with respect to the partition of the ground set into vertex triples. We define representability of sheltering matroids over a field $\mathbb{F}$ and we characterize the isotropic matroids among the $GF(2)$-representable 3-sheltering matroids (cf. Theorem~\ref{thm:3shelt_isotrm}). As an isotropic matroid is uniquely determined by its multimatroid, we also characterize the multimatroids corresponding to isotropic matroids in terms of natural properties of multimatroids (cf. Theorem~\ref{thm:char_bt3sm}). We moreover consider a stronger notion of representability for 2-matroids and 2-sheltering matroids inspired by the usual notion of representability for delta-matroids.

After discussing sheltering matroids in Section~\ref{sec:char_isotr_mm}, we devote the rest of the paper to the relationship between the matroidal structure of $M[IAS(G)]$ and the graphical structures of $G$ and locally equivalent graphs. We need some more notation and terminology to describe these results.

\begin{definition}
\label{localeq}
\begin{enumerate}
\item If $v\in V(G)$ then the graph obtained from $G$ by complementing the loop status of $v$ is denoted $G_{\ell}^{v}$.

\item If $v\in V(G)$ then the graph obtained from $G$ by complementing the adjacency status of every pair of neighbors of $v$ is denoted $G_{s}^{v}$, and it is called the \emph{simple local complement} of $G$ with respect to $v$.

\item If $v\in V(G)$ then the graph obtained from $G$ by complementing the adjacency status of every pair of neighbors of $v$ and the loop status of every neighbor of $v$ is denoted $G_{ns}^{v}$, and it is called the \emph{non-simple local complement} of $G$ with respect to $v$.

\item A graph that can be obtained from $G$ using loop complementations and local complementations is \emph{locally equivalent} to $G$. \label{item:def_loc_equiv}
\end{enumerate}
\end{definition}

We should mention that $G_{s}^{v}$ and $G_{ns}^{v}$ are both called ``local complements'' of $G$ in the literature. For precision we use the unmodified term ``local complement'' only in situations like item~\ref{item:def_loc_equiv} of Definition~\ref{localeq}, where both types of local complement are included.

Also, we note that according to Definition~\ref{localeq}, locally equivalent graphs have the same vertices. Consequently the equivalence relation on graphs generated by isomorphism and local equivalence is strictly coarser than isomorphism or local equivalence alone.

The principal result of~\cite{Tnewnew} is that two graphs are locally equivalent up to isomorphism if and only if their isotropic matroids are isomorphic. In fact, a sequence of local complementations and loop complementations that transforms $G$ into an isomorphic copy of $H$ will directly induce a corresponding isomorphism $M[IAS(G)]\to M[IAS(H)]$; see Section~\ref{sec:isom_isotropic_mat} for details.

\begin{definition}
\label{neighcirc}Let $v$ be a vertex of $G$, with open neighborhood $N_{G}(v)$.
Then the \emph{neighborhood circuit }of $v$, denoted $\zeta_{G}(v)$, is
$\{\chi_{G}(v)\}\cup\{\phi_{G}(w)\mid w\in N_{G}(v)\}$ if $v$ is unlooped, or
$\{\psi_{G}(v)\}\cup\{\phi_{G}(w)\mid w\in N_{G}(v)\}$ if $v$ is looped.
\end{definition}

Notice that whichever of $\chi_{G}(v),\psi_{G}(v)$ is included in $\zeta_{G}(v)$, the nonzero entries of the corresponding column of $IAS(G)$ appear in the rows corresponding to neighbors of $v$; hence $\zeta_{G}(v)$ is a transverse circuit of $M[IAS(G)]$. Also, if $\Phi(G)=\{\phi_{G}(v)\mid v\in V(G)\}$ then $G$ is determined up to isomorphism by the submatroid of $M[IAS(G)]$
whose ground set is
\[
\zeta(G)=\Phi(G) \cup \bigcup\limits_{v\in V(G)} \zeta_{G}(v)\text{.}
\]
For if $v\in V(G)$ then $v$ is looped if and only if $\psi_{G}(v)\in\zeta(G)$, and the open neighborhood $N_{G}(v)$ is determined by the fundamental circuit of $\chi_{G}(v)$ or $\psi_{G}(v)$ (whichever is included in $\zeta(G)$) with respect to the basis $\Phi(G)$.

Recall that a subset $X\subseteq V(G)$ is \emph{stable} if no two elements of $X$ are neighbors in $G$. (We consider all sets of cardinality $0$ or $1$ to be stable.) By the way, stable sets are also called ``independent'' but we do not use that term here, to avoid any possibility of confusion with matroid independence. 

\begin{definition}
\label{neighmat}If $X$ is a stable set of $G$ then
\[
T_{G}(X)=\{\phi_{G}(v)\mid v\notin X\} \cup \{\chi_{G}(x)\mid x\in X\text{ is unlooped}\}\cup \{\psi_{G}(x)\mid x\in X\text{ is looped}\}
\]
is a transversal of $W(G)$. We call the restriction $M[IAS(G)]\mid T_{G}(X)$ the \emph{neighborhood matroid} of $X$ in $G$, and denote it $M_{G}(X)$.
\end{definition}

Notice that $T_{G}(X)$ contains the neighborhood circuits of the elements of $X$.

A looped simple graph $G$ may certainly have transverse circuits that are not neighborhood circuits, and transverse matroids that are not neighborhood matroids. However, it turns out that all transverse circuits and transverse matroids correspond to neighborhood circuits and neighborhood matroids in graphs locally equivalent to $G$:

\begin{theorem}
\label{tranmat}Let $T\in\mathcal{T}(G)$. Then there is a graph $H$ locally
equivalent to $G$, with the property that an induced isomorphism
$M[IAS(G)]\rightarrow M[IAS(H)]$ maps the transverse matroid $M[IAS(G)]\mid T$
isomorphically to a neighborhood matroid of a stable set of $H$.
\end{theorem}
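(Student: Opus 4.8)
The plan is to recast everything in terms of \emph{markers}. A transversal $T\in\mathcal{T}(G)$ selects for each vertex $v$ exactly one of $\phi_{G}(v),\chi_{G}(v),\psi_{G}(v)$; write $m_v$ for this marker and $X=\{v:m_v\neq\phi_{G}(v)\}$. Comparing with Definition~\ref{neighmat}, $T$ is literally a neighborhood transversal $T_{G}(X)$ exactly when two conditions hold: (T1) for each $v\in X$ the marker $m_v$ is the \emph{neighborhood marker} $n_v$ (namely $\chi_{G}(v)$ if $v$ is unlooped and $\psi_{G}(v)$ if $v$ is looped, the marker appearing in $\zeta_G(v)$ of Definition~\ref{neighcirc}); and (T2) $X$ is stable. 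Writing the chosen columns as a $V\times V$ matrix $B_T$ and ordering $V=X\sqcup Z$ with $Z=V\setminus X$, one checks that in general $B_T=\left(\begin{smallmatrix}P&0\\ Q&I\end{smallmatrix}\right)$, where $P$ is the (symmetric, zero-diagonal) adjacency of $G[X]$ and $Q$ is the $X$--$Z$ adjacency, and that (T1)--(T2) say precisely $P=0$ and the diagonal repaired, i.e.\ $B_T=\left(\begin{smallmatrix}0&0\\ Q&I\end{smallmatrix}\right)$. Since $M_{H}(X)$ has rank $|V|-|X|$, the integer $|X|$ equals the nullity of the (invariant) transverse matroid $M[IAS(G)]\mid T$; thus $|X|$ is fixed in advance and the theorem reduces to: using local equivalence, reach a presentation in which the carried transversal satisfies (T1) and (T2).

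The mechanism is a dictionary for how the generating operations permute markers. A change-of-basis computation (for unlooped $v$ one verifies that $\Theta=I+n_v e_v^{\top}$ satisfies $\Theta\cdot IAS(G)=IAS(G_{ns}^{v})$ up to reordering the triple at $v$) yields: a loop complementation at $v$ transposes $\chi_{G}(v)\leftrightarrow\psi_{G}(v)$ and fixes every other triple; a local complementation at $v$ fixes the neighborhood marker $n_v$ and transposes the other two markers of $v$, the non-simple version affecting no other triple and the simple version additionally transposing $\chi_{G}(w)\leftrightarrow\psi_{G}(w)$ at each $w\in N_{G}(v)$. These are exactly the permutations realized by the induced isomorphisms of \cite{Tnewnew}, so tracking $T$ through a sequence of operations is just composing these transpositions while the graph changes accordingly.

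The normalization itself is the heart of the argument, and I expect the coupling of (T1) and (T2) to be the main obstacle. The subtlety is that the defects interact: loop complementation toggles both the loop of $v$ and the $\chi/\psi$ choice at $v$, hence \emph{preserves} the (T1)-matching status at $v$, so (T1) cannot be repaired by loop complementations alone; meanwhile a local complementation at a neighbor of $v$ flips that matching status but simultaneously toggles adjacencies among $N_G(v)$, so it can create or destroy internal edges of $X$. I would therefore run the reduction by extremal choice: over all graphs locally equivalent to $G$ and all transversals in the orbit of $T$, take one minimizing a combined potential (the number of internal edges of the current set $X$, broken by the number of (T1)-mismatched vertices). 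The crux is then the local lemma that, whenever the chosen representative is not yet a neighborhood transversal, some single loop complementation, local complementation, or edge pivot on an internal edge $xy$ of $X$ strictly decreases the potential; verifying this strict decrease, with the correct bookkeeping of the marker side effects supplied by the dictionary, is where the real work lies.

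Finally I would note that this normalization is exactly a normal-form statement for the associated isotropic system: the markers are the three nonzero vectors of each local copy of $GF(2)^2$, local and loop complementations are the transvections generating Bouchet's fundamental-graph action, and ``$T$ is a neighborhood transversal'' says the complete vector determined by $T$ is the reference vector on the cobasis $Z$ and the supplementary vector on the stable set $X$; the existence of such an adapted presentation is the natural home for the combinatorial lemma above. Once a locally equivalent $H$ realizing (T1)--(T2) is produced, the sequence of operations used is one witnessing the local equivalence $G\sim H$, so by \cite{Tnewnew} it induces an isomorphism $M[IAS(G)]\to M[IAS(H)]$ carrying $M[IAS(G)]\mid T$ to the neighborhood matroid $M_{H}(X)$, which is the desired conclusion.
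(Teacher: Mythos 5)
Your setup is sound: the marker dictionary you describe is exactly Proposition~\ref{iso3}, and reducing the theorem to finding a locally equivalent presentation in which the carried transversal satisfies (T1) and (T2) is the right target. But the proof has a genuine gap at precisely the point you flag yourself: the ``local lemma'' asserting that some single operation strictly decreases your potential is never proved, and it is far from clear that it holds for the potential you propose. The difficulty is concrete. A simple local complementation at a vertex $w\notin X$ adjacent to a (T1)-mismatched $v\in X$ does repair the mismatch at $v$ (it swaps $\chi$ and $\psi$ at every neighbor of $w$ without changing loop statuses), but it simultaneously moves $w$ into $X$ (its marker goes from $\phi$ to $\psi$ or $\chi$), which instantly makes $wv$ an internal edge of the new $X$, and it toggles adjacencies throughout $N_G(w)$; both coordinates of your potential can increase. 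Because $T$ is a full transversal carrying no independence hypothesis, you have no tool analogous to the circuit $\zeta_G(w)$ to guarantee that a good pivot vertex exists. So the heart of the argument is missing, and the extremal scheme as stated may simply not close.

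The paper avoids this coupling entirely by not normalizing $T$ directly. It first chooses a basis $B$ of the transverse matroid $M[IAS(G)]\mid T$ and proves (Proposition~\ref{allphi}) that an \emph{independent} subtransversal can always be pushed into $\Phi(H)$ by induction on the number of its non-$\phi$ elements; here independence is exactly what guarantees, via the neighborhood circuits, that a suitable vertex for local complementation always exists. Once $\beta(B)\subseteq\Phi(H)$, everything else is forced by rank considerations rather than by further normalization: $\beta(T-B)$ can contain no $\phi_H$ or $\psi_H$ element (such an element would yield an independent subtransversal larger than a basis of the transverse matroid), hence consists of $\chi_H$ elements of unlooped vertices, and the vertices carrying $\beta(T-B)$ must form a stable set, since an edge between two of them would place a $\chi_H$ column outside the span of $\beta(B)$. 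If you want to salvage your approach, replace the extremal argument on $T$ by this two-step scheme: normalize a basis of the transverse matroid first, then deduce (T1) and (T2) for the remaining markers as consequences, not as goals of the normalization.
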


\begin{theorem}
\label{trancirc}Let $S\in\mathcal{S}(G)$. Then $S$ is a transverse circuit of
$G$ if and only if there is a graph $H$ locally equivalent to $G$, with the
property that an induced isomorphism $M[IAS(G)]\rightarrow M[IAS(H)]$ maps $S$
to a neighborhood circuit of $H$.
\end{theorem}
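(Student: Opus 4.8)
The ``if'' direction is immediate from the remark following Definition~\ref{neighcirc}. An induced isomorphism $\beta\colon M[IAS(G)]\to M[IAS(H)]$ is a matroid isomorphism carrying each vertex triple $\tau_G(v)$ onto $\tau_H(v)$ (local equivalence preserves the vertex set); hence it maps subtransversals to subtransversals and circuits to circuits, and therefore transverse circuits to transverse circuits in both directions. Since every neighborhood circuit $\zeta_H(w)$ is a transverse circuit of $H$, if $\beta(S)=\zeta_H(w)$ then $S=\beta^{-1}(\zeta_H(w))$ is a transverse circuit of $G$.

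For the ``only if'' direction I would first extend $S$ to a transversal $T\in\mathcal{T}(G)$ and apply Theorem~\ref{tranmat}, obtaining a graph $H$ locally equivalent to $G$, a stable set $X$ of $H$, and an induced isomorphism $\alpha$ carrying $M[IAS(G)]\mid T$ onto the neighborhood matroid $M_H(X)$. As $\alpha$ is a matroid isomorphism and $S$ is a circuit of $M[IAS(G)]\mid T$, the set $C:=\alpha(S)$ is a circuit of $M_H(X)$ contained in $T_H(X)$. The key observation is then that, within $M_H(X)$, the neighborhood circuits are precisely the circuits containing a single non-$\phi$ element. Indeed, for $x\in X$ the element of $T_H(X)$ lying in $\tau_H(x)$ (that is, $\chi_H(x)$ or $\psi_H(x)$) equals $\sum_{u\in N_H(x)}\phi_H(u)$ in the column space, and since $X$ is stable each such $u$ lies outside $X$, so $\phi_H(u)\in T_H(X)$; thus its fundamental circuit with respect to the basis $\Phi(H)$ is exactly $\zeta_H(x)$. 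Conversely, because the $\phi$-part $\{\phi_H(u)\mid u\notin X\}$ of $T_H(X)$ is independent, any circuit of $M_H(X)$ with a unique non-$\phi$ element must consist of that element together with exactly the $\phi_H(u)$ in its support, hence is a neighborhood circuit.

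It therefore suffices to transform $H$ by further local-equivalence operations so that the image of $C$ contains only one non-$\phi$ element. Writing $P\subseteq X$ for the set of vertices contributing a non-$\phi$ element to $C$, I would argue by induction on $|P|$. If $|P|\ge 2$, then each $x\in P$ has $N_H(x)\neq\emptyset$ (otherwise the corresponding column is a loop and $C$ would be the singleton circuit it forms), and I would select an appropriate edge incident to $P$ and apply the corresponding edge pivot, a composition of local complementations and hence a local-equivalence operation inducing a matroid isomorphism. I would then track the effect of this pivot on the triple labels $\phi,\chi,\psi$ meeting $C$ and on the open neighborhoods of the affected vertices, exhibiting a stable set $X'$ of the pivoted graph for which the image of $C$ is again a circuit of the associated neighborhood matroid but with strictly fewer non-$\phi$ elements. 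Iterating and invoking the characterization above produces the graph and neighborhood circuit required.

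The main obstacle is precisely this reduction step, which is the genuine content beyond Theorem~\ref{tranmat} (a circuit of a neighborhood matroid need not itself be a neighborhood circuit). Carrying it out requires an explicit computation of how an edge pivot acts on the columns of $IAS$ in the triples that meet $C$ and on the relevant neighborhoods, together with two verifications: that the resulting configuration is still the neighborhood matroid of a stable set, and that the number of non-$\phi$ elements of the image of $C$ strictly decreases. Equivalently, one must show that the minimal dependency recorded by the transverse circuit $S$ can be concentrated, through pivots, onto a single special column, which is the point where the combinatorics of the circuit and the algebra of the principal pivot transform must be reconciled.
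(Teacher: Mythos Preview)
Your ``if'' direction is fine. The ``only if'' direction has a genuine gap: the inductive reduction step via edge pivots is not carried out, and you yourself flag it as the main obstacle. You would have to show that a pivot can always be chosen so that (a) the image of $C$ still sits inside the neighborhood matroid of some stable set and (b) its count of non-$\phi$ elements strictly drops; neither is verified, and tracking the effect of a pivot on \emph{all} triples meeting $C$ simultaneously is delicate.

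More to the point, this obstacle is self-inflicted. The paper deduces the theorem in one line from Corollary~\ref{nullity} with $\nu=1$, and the reason this works is that Theorem~\ref{tranmat} is proved via the sharper Theorem~\ref{stable}, which lets you \emph{choose} the basis $B$ of the transverse matroid. If you take $J\subset S$ to be a maximal independent subset (so $|J|=|S|-1$, since $S$ is a circuit) and extend $J$ to a basis $B$ of $M[IAS(G)]\mid T$, then Proposition~\ref{allphi}/Theorem~\ref{stable} produces an $H$ in which every element of $J$ is sent to a $\phi_H$ element. Thus $\beta(S)$ contains exactly one non-$\phi$ element, and by your own observation that forces $\beta(S)$ to be a neighborhood circuit $\zeta_H(x)$. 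No pivot-by-pivot reduction is needed; the freedom in choosing $B$ already concentrates the dependency on a single column. Your detour through an arbitrary application of Theorem~\ref{tranmat} throws this freedom away and then tries to recover it the hard way.
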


Here are two direct consequences of Theorems \ref{tranmat} and
\ref{trancirc}.

\begin{corollary}
\label{nullnu}
Let $G$ be a looped simple graph, and $\nu$ a positive integer. Then $G$ has a transverse matroid of nullity $\nu$ if and only if some graph locally
equivalent to $G$ has a stable set of size $\nu$.
\end{corollary}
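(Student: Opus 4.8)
The plan is to reduce the whole statement to a single nullity computation and then read off both implications from Theorem~\ref{tranmat} and the invariance of nullity under matroid isomorphism. The computation to establish is: for a stable set $X$ of any looped simple graph $H$, the neighborhood matroid $M_H(X)$ has nullity exactly $|X|$.

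First I would verify this nullity claim. The ground set $T_H(X)$ is a transversal, so it has exactly $|V(H)|$ elements. Among these, the columns $\phi_H(v)$ with $v\notin X$ are distinct columns of the identity block, hence linearly independent, and there are $|V(H)|-|X|$ of them. For each $x\in X$ the remaining chosen column is $\chi_H(x)$ or $\psi_H(x)$ (according to whether $x$ is unlooped or looped), and in either case its nonzero entries are confined to the rows indexed by $N_H(x)$. Since $X$ is stable, $N_H(x)$ is disjoint from $X$, so this column lies in the span of $\{\phi_H(v)\mid v\notin X\}$. Thus the $\phi$-columns form a basis of $M_H(X)$, the rank is $|V(H)|-|X|$, and the nullity is $|X|$. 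This is precisely where stability is essential: without it, a $\chi$- or $\psi$-column could have a nonzero entry in a row indexed by $X$ and so fail to lie in the span of the chosen identity columns, spoiling the count.

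For the forward direction, suppose $M[IAS(G)]\mid T$ is a transverse matroid of nullity $\nu$, with $T\in\mathcal{T}(G)$. By Theorem~\ref{tranmat} there is a graph $H$ locally equivalent to $G$ and an induced isomorphism $M[IAS(G)]\to M[IAS(H)]$ carrying $M[IAS(G)]\mid T$ isomorphically onto a neighborhood matroid $M_H(X)$ of a stable set $X$ of $H$. Isomorphic matroids have equal nullity, so $M_H(X)$ has nullity $\nu$, and by the computation above $|X|=\nu$. Hence $H$ is locally equivalent to $G$ and has a stable set of size $\nu$.

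For the converse, suppose some graph $H$ locally equivalent to $G$ has a stable set $X$ with $|X|=\nu$. Then $M_H(X)$ is a transverse matroid of $H$ of nullity $\nu$ by the first step. Since local equivalence is symmetric, the principal result of~\cite{Tnewnew} supplies an induced isomorphism $M[IAS(H)]\to M[IAS(G)]$, which maps vertex triples to vertex triples and hence transversals to transversals; it therefore carries $M_H(X)$ to an isomorphic transverse matroid of $G$, again of nullity $\nu$. The only genuine obstacle is the nullity computation of the first step; once stability is used correctly there, both implications are immediate.
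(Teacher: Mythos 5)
Your proposal is correct and follows essentially the same route as the paper: the paper derives this corollary from Theorem~\ref{tranmat}, and the nullity computation you carry out for a neighborhood matroid of a stable set (the $\phi$-columns off $X$ form a basis, each $\chi$- or $\psi$-column of an element of $X$ lies in their span because $X$ is stable) is exactly the computation appearing in the paper's proof of the more general Corollary~\ref{nullity}. No gaps.
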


\begin{corollary}
\label{circk}Suppose $G$ is a looped simple graph, and $k\in\mathbb{N}$. Then $G$ has a transverse circuit of size $k$ if and only if some graph locally equivalent to $G$ has a vertex of degree $k-1$.
\end{corollary}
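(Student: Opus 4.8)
The plan is to read off the statement from Theorem~\ref{trancirc}, the only extra ingredient being the elementary observation that the size of a neighborhood circuit records a vertex degree. For any vertex $v$ of any graph $H$, the neighborhood circuit $\zeta_{H}(v)$ consists of the single element $\chi_{H}(v)$ or $\psi_{H}(v)$ (according to whether $v$ is unlooped or looped), together with the $|N_{H}(v)|$ elements $\phi_{H}(w)$ for $w\in N_{H}(v)$. Since loops are not counted in vertex degrees, $|N_{H}(v)|=\deg_{H}(v)$, so $|\zeta_{H}(v)|=\deg_{H}(v)+1$ in every case (including isolated vertices, where $\zeta_{H}(v)$ is a single loop of the matroid of size $1$). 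Thus ``$H$ has a vertex of degree $k-1$'' and ``$H$ has a neighborhood circuit of size $k$'' are interchangeable, and the corollary reduces to the claim that $G$ has a transverse circuit of size $k$ if and only if some graph locally equivalent to $G$ has a neighborhood circuit of size $k$.

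For the forward direction I would suppose $G$ has a transverse circuit $S$ of size $k$; in particular $S\in\mathcal{S}(G)$. By Theorem~\ref{trancirc} there is a graph $H$ locally equivalent to $G$ and an induced isomorphism $M[IAS(G)]\rightarrow M[IAS(H)]$ carrying $S$ to a neighborhood circuit $\zeta_{H}(v)$. Since an isomorphism preserves cardinality, $|\zeta_{H}(v)|=|S|=k$, so by the count above $\deg_{H}(v)=k-1$, and $H$ is the required locally equivalent graph.

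For the converse I would suppose some graph $H$ locally equivalent to $G$ has a vertex $v$ with $\deg_{H}(v)=k-1$, so that $\zeta_{H}(v)$ is a neighborhood circuit of size $k$. Because $H$ is locally equivalent to $G$, the principal result of~\cite{Tnewnew} recalled before Definition~\ref{neighcirc} supplies an induced isomorphism $M[IAS(G)]\rightarrow M[IAS(H)]$. Such isomorphisms respect the partition into vertex triples, so the preimage $S$ of $\zeta_{H}(v)$ is a subtransversal of $G$, that is $S\in\mathcal{S}(G)$, and this isomorphism maps $S$ to the neighborhood circuit $\zeta_{H}(v)$ by construction. The ``if'' direction of Theorem~\ref{trancirc} then shows that $S$ is a transverse circuit of $G$, with $|S|=|\zeta_{H}(v)|=k$, completing the proof.

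Since the whole argument is just an application of Theorem~\ref{trancirc} together with a cardinality count, there is no serious obstacle. The only points requiring attention are verifying that the size formula $|\zeta_{H}(v)|=\deg_{H}(v)+1$ holds uniformly (so that the equivalence between degrees and neighborhood-circuit sizes is exact), and invoking the induced isomorphism in the correct direction in the converse, so that a neighborhood circuit of $H$ genuinely pulls back to a transverse circuit of $G$.
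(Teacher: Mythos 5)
Your proof is correct and is exactly the argument the paper intends: the corollary is stated there as a direct consequence of Theorem~\ref{trancirc}, with the only added ingredient being the observation that $|\zeta_{H}(v)|=\deg_{H}(v)+1$, which you verify carefully (including the isolated-vertex case). Nothing is missing.
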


These results indicate the close relationship between $M[IAS(G)]$ and the structures of graphs locally equivalent to $G$. A special case of Theorem \ref{tranmat} also provides a simple explanation of the fact that $M[IAS(G)]$ determines $G$ up to isomorphism and local equivalence~\cite{Tnewnew}: in fact, all of the graphs included in the local equivalence class of $G$ are determined up to isomorphism by $M[IAS(G)]$. As detailed in Corollary \ref{allphibasis}, if $H$ is locally equivalent to $G$ and a local equivalence induces a matroid isomorphism $\beta:M[IAS(G)]\rightarrow M[IAS(H)]$, then vertex neighborhoods in $H$ correspond directly to fundamental circuits in $M[IAS(G)]$ with respect to the basis $\beta^{-1}(\Phi(H))$.

In Sections~\ref{sec:bipartite} and \ref{wheel} we discuss two more ways to use the results above: one is a characterization of graphs that are locally equivalent to bipartite graphs, and the other is a characterization of the local equivalence class of the wheel graph $W_5$.

In Section~\ref{sec:minors} we discuss minors of isotropic matroids. Section~\ref{sec:par_red_hereditary} is focused on a special type of minor: a parallel reduction. It turns out that parallel reductions of isotropic matroids correspond precisely to pendant-twin reductions of graphs. In particular, the graphs whose isotropic matroids can be resolved using parallel reductions are the same as the graphs that can be resolved using pendant-twin reductions. These are the graphs whose connected components are all distance hereditary \cite{BM}.

As a special case, in Section~\ref{sec:forests} we prove the following striking result, which underscores the fundamental difference between isotropic matroids of graphs and the more familiar cycle matroids.

\begin{theorem}
\label{forests}Two forests are isomorphic if and only if their isotropic matroids are isomorphic.
\end{theorem}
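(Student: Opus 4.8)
The forward direction is immediate, since an isomorphism $F_1 \to F_2$ induces a bijection $W(F_1) \to W(F_2)$ that carries the columns of $IAS(F_1)$ to those of $IAS(F_2)$ and preserves linear dependence, so it identifies the two isotropic matroids. For the converse I would argue by induction on $n = |V(F_1)| = |V(F_2)|$, showing that the isomorphism type of a forest is recoverable from the abstract matroid $M[IAS(F_1)]$.

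The first structural handle is isolated vertices. In a forest, $\chi_G(v)$ is a loop of $M[IAS(G)]$ exactly when $v$ is isolated, these are the only loops, and then $\psi_G(v)$ is parallel to $\phi_G(v)$; so each isolated vertex contributes a direct summand of $M[IAS(G)]$ consisting of a loop together with a two-element rank-one component, a configuration no non-isolated vertex produces (for a non-isolated $v$ the coordinate carrying $\phi_G(v)$ is shared with the $\chi$-columns of its neighbors). Since the number of loops and the decomposition into connected components are matroid invariants, $F_1$ and $F_2$ have the same number of isolated vertices; deleting one from each removes matched (hence isomorphic) direct summands and, by uniqueness of the component decomposition, leaves isomorphic matroids, to which the inductive hypothesis applies. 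Thus I may assume neither forest has an isolated vertex, so every component is a tree with at least one leaf.

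For this remaining case I would use the correspondence of Section~\ref{sec:par_red_hereditary} between parallel reductions of isotropic matroids and pendant-twin reductions of graphs. A leaf $v$ with neighbor $u$ satisfies $\chi_G(v) = \phi_G(u)$, so leaves are detected by nontrivial parallel classes: the leaves attached to a fixed vertex $u$ are precisely the $\chi$-elements parallel to $\phi_G(u)$. Because a forest is distance hereditary, $M[IAS(G)]$ is resolved by parallel reductions, and removing a leaf $v$ carries $M[IAS(G)]$ to $M[IAS(G-v)]$ by such a reduction. The plan is to perform a parallel reduction determined by the matroid on both $M[IAS(F_1)]$ and $M[IAS(F_2)]$, obtain isomorphic isotropic matroids of the smaller forests $F_1 - v_1$ and $F_2 - v_2$, apply the inductive hypothesis to conclude $F_1 - v_1 \cong F_2 - v_2$, and then re-attach the deleted leaves to recover $F_1 \cong F_2$.

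The crux, and the step I expect to be hardest, is making the reduction and re-attachment compatible with an abstract matroid isomorphism, which need not respect vertex triples, so that a leaf of $F_1$ cannot simply be declared to correspond to a leaf of $F_2$. To pin down where a removed leaf hangs I would invoke the principal result of~\cite{Tnewnew} to replace $F_2$ by an isomorphic copy locally equivalent to $F_1$, and use Theorems~\ref{tranmat} and~\ref{trancirc}: a leaf corresponds to a transverse circuit of size $2$ that is realized as a neighborhood circuit, and Corollary~\ref{circk} guarantees that the reduced matroids continue to present their pendant neighborhoods as neighborhood circuits. The delicate point is to show that the attachment data surviving the reduction, namely which vertex of the reduced forest each removed leaf hung from, is determined by the matroid up to the automorphisms realized by isomorphisms of the reduced forest, so that the two reconstructions necessarily agree. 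This bookkeeping, rather than any single computation, is where the real work lies.
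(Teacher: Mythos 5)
Your outline follows the same overall route as the paper — induction on $|V|$, disposing of isolated vertices via loops, and then removing a pendant vertex using the parallel pair $\{\chi_G(v),\phi_G(u)\}$ — but it stops exactly at the step that constitutes the actual proof. You correctly identify the crux (an abstract matroid isomorphism need not respect vertex triples, and even after fixing that, the re-attachment of the deleted leaf must be forced), and then you declare that ``this bookkeeping \dots is where the real work lies'' without doing it. That is a genuine gap, not a routine verification. Moreover, the tools you propose for closing it do not obviously suffice: Theorems~\ref{tranmat} and~\ref{trancirc} and Corollary~\ref{circk} tell you that a size-$2$ transverse circuit is realized as a neighborhood circuit of \emph{some} locally equivalent graph, but they do not tell you \emph{which} vertex of the reduced forest the removed leaf was attached to, which is precisely the datum you need to reconstruct $F_1\cong F_2$. (Also note that once you ``replace $F_2$ by an isomorphic copy locally equivalent to $F_1$'' you are implicitly assuming locally equivalent forests are isomorphic — Bouchet's theorem — which is essentially the statement being reproved by this argument.)

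The paper closes the gap with two specific devices you are missing. First, it strengthens the inductive statement: not merely $G-v\cong H-\beta(v)$, but that the graph isomorphism can be chosen to agree with the vertex bijection of $\beta$ at every vertex $x$ where $\beta(\phi_G(x))=\phi_H(\beta(x))$ (the ``strong form''). Second, before reducing, it normalizes the compatible isomorphism $\beta$ (obtained from Theorem~\ref{theory}) using the two explicit compatible automorphisms of Corollary~\ref{parallelcor}: case 2(a) lets you assume $\beta(\chi_G(v))=\chi_H(\beta(v))$ and $\beta(\phi_G(w))=\phi_H(\beta(w))$ — so $\beta(v)$ really is pendant on $\beta(w)$ in $H$ — and case 2(b) lets you additionally assume $\beta(\phi_G(v))=\phi_H(\beta(v))$. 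After this normalization, contracting $\phi_G(v)$ gives an isomorphism $M[IAS(G-v)]\cong M[IAS(H-\beta(v))]$ for which the strong inductive hypothesis matches $w$ to $\beta(w)$, and the leaf re-attaches unambiguously. Without the strong form and the normalization step, your induction does not close, because nothing prevents the reduced isomorphism from matching the attachment vertex of $v_1$ to a vertex of $F_2-v_2$ other than the attachment vertex of $v_2$.
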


\subsection{Remarks about delta-matroids and isotropic systems}

Before providing details of our results, we briefly describe the connections tying the two kinds of matroid structures we discuss in detail (isotropic matroids and multimatroids) to two other kinds of structures (delta-matroids and isotropic systems), which were introduced earlier. Three remarks about these structures will provide some context.


(i) Beginning in the 1980s, Bouchet~\cite{bouchet1987} and other authors developed a general theory of delta-matroids, which includes delta-matroids associated with graphs and delta-matroids not associated with graphs. The delta-matroids associated with graphs are \emph{binary}, i.e., they can be represented (in an appropriate sense) over $GF(2)$. Bouchet introduced isotropic systems at about the same time~\cite{Bi1, Bi2}. In contrast with the theory of delta-matroids, a general theory of isotropic systems that would include instances not represented over $GF(2)$ has not been fully developed, though it has been introduced by other authors~\cite{BB}. This contrast is reflected in terminology: the term ``delta-matroid'' does not include an assumption that the structure is tied to $GF(2)$, but the term ``isotropic system'' does include such an assumption. Isotropic matroids are essentially equivalent to isotropic systems~\cite{Tnewnew}, and are named for them.

(ii) In the 1990s Bouchet introduced multimatroids~\cite{B1, B2, B3, B4}, as a way of providing a common generalization of the theories of delta-matroids and isotropic systems. Delta-matroids are equivalent to multimatroids of a particular type, the 2-matroids, and isotropic systems are equivalent to multimatroids of a different particular type, a subclass of the 3-matroids. A looped simple graph has a corresponding 2-matroid and also a corresponding 3-matroid; the 2-matroid is equivalent to the graph's delta-matroid, and the 3-matroid is equivalent to the graph's isotropic matroid and isotropic system. Consequently when we explicitly discuss the 2-matroids and isotropic matroids of graphs, we are also implicitly discussing the delta-matroids, 3-matroids and isotropic systems of graphs.

(iii) More recently, Brijder and Hoogeboom have observed that some delta-matroids admit a loop complementation operation. They call these delta-matroids ``vf-safe.'' The class of vf-safe delta-matroids properly contains the class of binary delta-matroids; for instance all quaternary matroids are vf-safe~\cite{BH4}. In~\cite{BH3} loop complementation is used to show that the 2-matroid corresponding to a vf-safe delta-matroid extends to a special type of 3-matroid in a canonical way. For the binary delta-matroid associated to a graph $G$, the delta-matroid loop complementation operation is compatible with graph-theoretic loop complementation. Moreover, if the construction of~\cite{BH3} is applied to the binary delta-matroid associated with a graph $G$, the result is the 3-matroid associated with $G$. Consequently the 2-matroid, the delta-matroid, the isotropic system, the 3-matroid and the isotropic matroid of a graph are all essentially equivalent to each other.

It might seem strange to try to explain the connections tying together four types of objects --- graphs, binary delta-matroids, isotropic systems, and multimatroids --- by introducing isotropic matroids into an already complicated situation. But there are three natural reasons to expect isotropic matroids to yield useful insights. One reason is that the relationship between a graph and its isotropic matroid is fairly transparent, as $M[IAS(G)]$ is defined directly from the adjacency matrix of $G$. The second reason is that unlike delta-matroids, isotropic systems and multimatroids, which are specialized types of structures, isotropic matroids are ordinary binary matroids. The theory of binary matroids has been developed thoroughly since Whitney introduced matroids more than 80 years ago, and this theory can be applied directly to isotropic matroids. The third reason is that $M[IAS(G)]$ contains the binary delta-matroid, isotropic system and multimatroid associated with $G$, so we can see the interactions among these structures within the isotropic matroid.

In summary, we see that although the connections among delta-matroids, isotropic systems and multimatroids are quite complicated in general, the theories are very closely related when restricted to instances representable over $GF(2)$. The following compilation of results from various references indicates that this close relationship also includes isotropic matroids, and that all these structures detect local equivalence.

\begin{theorem}
\label{iso1} If $G$ and $H$ are looped simple graphs then any one of the following implies the rest.
\begin{enumerate}
\item $G$ and $H$ are locally equivalent, up to isomorphism.
\item Up to isomorphism, the binary delta-matroid associated to $H$ may be obtained from the binary delta-matroid associated to $G$ by applying some twists and loop complementations.
\item The isotropic systems associated to $G$ and $H$ are strongly isomorphic.
\item The 3-matroids associated to $G$ and $H$ are isomorphic.
\item The isotropic matroids associated to $G$ and $H$ are isomorphic.
\end{enumerate}
\end{theorem}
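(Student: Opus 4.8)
The plan is to prove the theorem as a single cycle of implications, say $1 \Rightarrow 2 \Rightarrow 3 \Rightarrow 4 \Rightarrow 5 \Rightarrow 1$, with each link supplied by the known correspondence between the relevant pair of structures. Since the statement asserts only that any one condition implies the rest, one cycle through all five items suffices. Almost all of the work is citation and the careful alignment of definitions across frameworks rather than new argument, which matches the description of this result as a compilation.

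For $5 \Rightarrow 1$ I would simply invoke the principal result of~\cite{Tnewnew}, that isomorphism of isotropic matroids is equivalent to local equivalence up to isomorphism; this is the deepest single equivalence, but it is already available. For $1 \Rightarrow 2$ I would use the dictionary between graph operations and delta-matroid operations: a simple or non-simple local complement of $G$ corresponds to a twist (pivot) of the associated binary delta-matroid, and a graph-theoretic loop complementation corresponds to the delta-matroid loop complementation operation of~\cite{BH3, BH4}. Composing these along a sequence realizing the local equivalence, and absorbing the final isomorphism, yields statement~2. The remaining links $2 \Rightarrow 3$ and $3 \Rightarrow 4$ are the classical equivalences among binary delta-matroids, isotropic systems, and the distinguished subclass of 3-matroids: binary delta-matroids equipped with twist and loop complementation are equivalent to isotropic systems under strong isomorphism (Bouchet,~\cite{bouchet1987, Bi1, Bi2}), and isotropic systems are equivalent to the appropriate 3-matroids in the multimatroid framework (Bouchet,~\cite{B1, B2, B3, B4}), so isomorphism transfers faithfully across these identifications.

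For $4 \Rightarrow 5$ I would use the material of Section~\ref{sec:char_isotr_mm}: the 3-matroid of $G$ is exactly the multimatroid of transverse matroids of the sheltering matroid $M[IAS(G)]$, and by the characterization established there (cf.\ Theorem~\ref{thm:char_bt3sm}) the isotropic matroid is recovered uniquely from this multimatroid. Hence an isomorphism of 3-matroids lifts to an isomorphism of the isotropic matroids, closing the cycle.

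The main obstacle I expect is not any single deep step but the precise matching of operations and of isomorphism notions across the four frameworks. In particular one must verify that the delta-matroid loop complementation operation of~\cite{BH3, BH4} --- the newest of the correspondences --- agrees with graph-theoretic loop complementation and interacts correctly with the twist, so that the pair ``local complementation, loop complementation'' on graphs translates faithfully into ``twist, loop complementation'' on delta-matroids and into the operations generating strong isomorphism of isotropic systems. Keeping the ``up to isomorphism'' qualifier consistent at each translation, and confirming that the distinguished subclass of 3-matroids is precisely the image of the isotropic systems, are the remaining points requiring care.
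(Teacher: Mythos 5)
Your proposal is correct and matches the paper's intent: Theorem~\ref{iso1} is presented there without a written proof, explicitly as a ``compilation of results from various references,'' and your cycle $1\Rightarrow2\Rightarrow3\Rightarrow4\Rightarrow5\Rightarrow1$ assembles exactly those citations --- Bouchet's delta-matroid/isotropic-system/multimatroid correspondences, the compatibility of graph and delta-matroid loop complementation from~\cite{BH3}, the main result of~\cite{Tnewnew} for $5\Rightarrow1$, and the recovery of the isotropic matroid from its 3-matroid (Section~\ref{sec:char_isotr_mm}) for $4\Rightarrow5$. The points you flag as needing care (aligning twist/loop complementation with the graph operations, and the ``up to isomorphism'' qualifiers) are precisely the definitional bookkeeping the paper delegates to the references, so no genuine gap remains.
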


\section{Sheltering matroids and their representability}

\label{sec:char_isotr_mm}

In this section we define the notion of sheltering matroid and show its
relationship with the notion of multimatroid from the literature.

\subsection{Multimatroids}

\label{ssec:mmatroids}

We now recall the notion of multimatroid and related notions from \cite{B1}.
Let $\Omega$ be a partition of a finite set $U$. A $T\subseteq U$ is called a
\emph{transversal} (\emph{subtransversal}, respectively) of $\Omega$ if $|T\cap
\omega|=1$ ($|T\cap\omega|\leq1$, respectively) for all $\omega\in\Omega$. We denote
the set of transversals of $\Omega$ by $\mathcal{T}(\Omega)$ and the set of
subtransversals of $\Omega$ by $\mathcal{S}(\Omega)$. A $p\subseteq U$ is
called a \emph{skew pair} of $\omega\in\Omega$ if $|p|=2$ and $p\subseteq
\omega$. We say that $\Omega$ is a \emph{$q$-partition} if $q=|\omega|$ for
all $\omega\in\Omega$. A \emph{transversal $q$-tuple} of a $q$-partition
$\Omega$ is a sequence $\tau=(T_{1},\ldots,T_{q})$ of $q$ mutually disjoint
transversals of $Q$. Note that the elements of $\tau$ are ordered.

Multimatroids form a generalization of matroids. Like matroids, multimatroids
can be defined in terms of rank, circuits, independent sets, etc. Here they
are defined in terms of independent sets.

\begin{definition}
[\cite{B1}]\label{def:multimatroid} Let $\Omega$ be a partition of a finite
set $U$. A \emph{multimatroid} $Z$ over $(U,\Omega)$, described by its
independent sets, is a triple $(U,\Omega,\mathcal{I})$, where $\mathcal{I}%
\subseteq\mathcal{S}(\Omega)$ is such that:

\begin{enumerate}
\item for each $T \in\mathcal{T}(\Omega)$, $(T,\mathcal{I}\cap2^{T})$ is a
matroid (described by its independent sets) and

\item for any $I \in\mathcal{I}$ and any skew pair $p = \{x,y\}$ of some
$\omega\in\Omega$ with $\omega\cap I = \emptyset$, $I \cup\{x\} \in
\mathcal{I}$ or $I \cup\{y\} \in\mathcal{I}$.
\end{enumerate}
\end{definition}

A multimatroid $Z$ is said to be \emph{nondegenerate} if $|\omega|>1$ for all
$\omega\in\Omega$. If $\Omega$ is a $q$-partition, then we say that $Z$ is a
\emph{$q$-matroid}. If $Z$ is a $1$-matroid, then we also view $Z$ simply as a
matroid. A \emph{basis} of a multimatroid $Z$ is a set in $\mathcal{I}$
maximal with respect to inclusion. It is shown in \cite{B1} that the bases of
a nondegenerate multimatroid are of cardinality $|\Omega|$. We say that $C
\in\mathcal{S}(\Omega)$ is a \emph{circuit} if $C$ is not an independent set
and $C$ is minimal with this property (with respect to inclusion). For
$X\subseteq U$, we define $Z[X]=(X,\Omega^{\prime},\mathcal{I}^{\prime})$ with
$\Omega^{\prime}=\{\omega\cap X\mid\omega\in\Omega,\omega\cap X\neq
\emptyset\}$ and $\mathcal{I}^{\prime}=\{I\in\mathcal{I}\mid I\subseteq X\}$.
We also define $Z-X=Z[U-X]$. Moreover, $Z$ is called \emph{tight} if both
$Z$ is nondegenerate and for every $S\in\mathcal{S}(\Omega)$
with $|S|=|\Omega|-1$, there is an $x\in\omega$ such that the rank of the
matroid $Q[S]$ (recall that we associate a 1-matroid with a matroid) is equal
to the rank of the matroid $Q[S\cup\{x\}]$, where $\omega$ is the unique set
in $\Omega$ such that $S\cap\omega=\emptyset$.

\subsection{Sheltering matroids}

Recall the notion of sheltering matroid, which was mentioned in the introduction.

\begin{definition}
\label{shelter}A \emph{sheltering matroid} is a tuple $Q=(M,\Omega)$ where $M$
is a matroid over some ground set $U$ and $\Omega$ is a partition of $U$, such
that for any independent set $I\in\mathcal{S}(\Omega)$ of $M$ and for any skew
pair $p=\{x,y\}$ of $\omega\in\Omega$ with $\omega\cap I=\emptyset$,
$I\cup\{x\}$ or $I\cup\{y\}$ is an independent set of $M$.
\end{definition}

Many matroid notions carry over straightforwardly to sheltering matroids. For
example, for $X\subseteq U$, we define the \emph{deletion} of $X$ from $Q $ by
$Q-X=(M-X,\Omega^{\prime})$ with $\Omega^{\prime}=\{\omega\setminus
X\mid\omega\in\Omega, \omega\setminus X \neq\emptyset\}$.

Note that if $Q=(M,\Omega)$ is a sheltering matroid, then $\mathcal{Z}%
({Q})=(U,\Omega,\mathcal{I})$ with $U$ the ground set of $M$ and
$\mathcal{I}=\{I\in\mathcal{S}(\Omega)\mid
I\mbox{ is an independent set of }M\}$ is a multimatroid. We say that
$\mathcal{Z}({Q})$ is the \emph{multimatroid corresponding} to $Q$. Also, we
say that $Q$ (or $M$) \emph{shelters} the multimatroid $\mathcal{Z}({Q})$. Not
every multimatroid is sheltered by a matroid \cite{B1}. Note that for
$X\subseteq U$, $\mathcal{Z}(Q-X)=\mathcal{Z}(Q)-X$. If $\mathcal{Z}({Q})$ is
a $q$-matroid, then $Q$ is called a \emph{$q$-sheltering matroid}, and $Q$ is
called \emph{tight} if $\mathcal{Z}(Q)$ is tight. It follows from \cite[Proposition~41]%
{Tnewnew} that $M[IAS(G)]$ is a tight 3-sheltering matroid, with $\Omega$ the
partition of $W(G)$ into vertex triples.

Let $Q_{1}=(M_{1},\Omega_{1})$ and $Q_{2}=(M_{2},\Omega_{2})$ be sheltering
matroids. An \emph{isomorphism} $\varphi$ from $Q_{1}$ to $Q_{2}$ is an
isomorphism from $M_{1}$ to $M_{2}$ that respects the skew classes, i.e., if
$x$ and $y$ are elements of the ground set of $M_{1}$, then $x$ and $y$ are in
a common skew class of $\Omega_{1}$ if and only if $\varphi(x)$ and
$\varphi(y)$ are in a common skew class of $\Omega_{2}$. If $Q_{1}$ and $Q_{2}
$ are isomorphic then $\mathcal{Z}({Q}_{1})$ and $\mathcal{Z}({Q}_{2})$ are
isomorphic too; but the converse is far from true:

\begin{example}
\label{example}Let $U=\{\phi_{1},\phi_{2},\chi_{1},\chi_{2}\}$ and
$\Omega=\{\{\phi_{1},\chi_{1}\},\{\phi_{2},\chi_{2}\}\}$. Let $Z$ be the
multimatroid in which every element of $\mathcal{S}(\Omega)$ is independent.
Then $Z$ has several nonisomorphic sheltering matroids, including the uniform
matroids $U_{4,4}$, $U_{3,4}$, $U_{2,4}$ and the matroid with bases
$\{\phi_{1},\phi_{2}\}$, $\{\phi_{1},\chi_{2}\}$, $\{\chi_{1},\phi_{2}\}$ and
$\{\chi_{1},\chi_{2}\}$.
\end{example}

Note that in Example~\ref{example} there are sheltering matroids of ranks $2$,
$3$ and $4$. In general, if $Q=(M,\Omega)$ is a sheltering matroid with
$\mathcal{Z}(Q)$ nondegenerate, then $M$ is of rank $r(M)\geq\left\vert
\Omega\right\vert $, as all bases of $\mathcal{Z}(Q)$ are independent in $M$.
Moreover, if $Q$ is a sheltering matroid with $r(M)>|\Omega|$, then a
sheltering matroid $Q^{tr}=(M^{tr},\Omega)$ is obtained from $Q$ by
truncation: $M^{tr}$ is the matroid whose independent sets are the independent sets of $M$ of cardinality $<r(M)$. By truncating repeatedly, we conclude that a nondegenerate multimatroid $Z$ can be sheltered by a matroid if and only if $Z$ can be sheltered by a matroid of rank $|\Omega|$.

\begin{definition}
We say that a sheltering matroid $Q = (M,\Omega)$ is \emph{strict} if $r(M)\leq|\Omega|$.
\end{definition}

If $Q$ is nondegenerate, the condition $r(M) \leq|\Omega|$ is equivalent to saying that the family of bases of $M$ that are (sub)transversals is equal to the family of bases of $\mathcal{Z}(Q)$. In particular, $r(M) \leq|\Omega|$ is equivalent to $r(M) = |\Omega|$.

\subsection{Representable multimatroids and sheltering matroids}

We say that a sheltering matroid $Q=(M,\Omega)$ is \emph{representable} over
the field $\mathbb{F}$ if the matroid $M$ is representable over $\mathbb{F}$.
We say that a multimatroid $Z$ is \emph{representable} over $\mathbb{F}$ if
there is a sheltering matroid $Q$ representable over $\mathbb{F}$ that
shelters $Z$. Note that this notion of representability for 1-matroids
corresponds to the usual notion of representability for matroids.

One might define a weaker version of representability for sheltering matroids
$Q=(M,\Omega)$ (and multimatroids $Z$) by requiring only that $Z$ defines
$\mathbb{F}$-representable matroids on the transversals of $\Omega$; Bouchet
and Duchamp presented a similar definition in \cite{Bouchet_1991_67}. We do
not explore this weaker version of representability in this paper.

We say that a multimatroid $Z$ is \emph{strictly representable} over
$\mathbb{F}$ if there is a strict sheltering matroid $Q$ representable over
$\mathbb{F}$ that shelters $Z$.

We say that a sheltering matroid and multimatroid are \emph{binary} when they
are representable over $GF(2)$. Similarly, we say that a multimatroid is
\emph{strictly binary} if it is strictly representable over $GF(2)$. In this
subsection we consider mainly 2-sheltering matroids and 2-matroids, and in
particular binary 2-sheltering matroids and binary 2-matroids.

Let $A$ be a $V\times V$ matrix (i.e., $A$ is a $|V| \times |V|$ matrix where the rows and columns are not ordered, but instead indexed by $V$). The principal pivot transform
\cite{Tsatsomeros2000151} of $A$ with respect to $X\subseteq V$ with $A[X]$
nonsingular is a $V\times V$ matrix denoted by $A\ast X$. We do not detail the
definition of principal pivot transform here, but we recall three useful
properties. The first of these properties is that if
\[
E=\bordermatrix{
& B & T \cr
& I & A
}
\]
is a standard representation of some matroid $M$ with respect to a basis $B$,
and $B^{\prime}$ is another basis of $M$, then
\[
E^{\prime}=\bordermatrix{
& B' & T \Delta B' \Delta B \cr
& I & A*(B' \cap T)
}
\]
is a standard representation of $M$ with respect to $B^{\prime}$. To state the
second property, recall that a matrix $A$ is skew-symmetric if $A^{T}=-A$.
Thus, skew-symmetric matrices over fields of characteristic $2$ may have
nonzero diagonal entries. The second useful property of the principal pivot
transform is that if $A$ is skew-symmetric, so is $A\ast X$. The third useful
property is that if $A$ is skew-symmetric and zero-diagonal, so is $A\ast X$.

The following lemma is from \cite[Theorem~4.1]{B1}.

\begin{lemma}
[\cite{B1}]\label{lem:char_bases_2matroid} Let $\Omega$ be a $2$-partition of
$U$, and $\mathcal{B}$ a nonempty subset of $\mathcal{T}(\Omega)$. Then $\mathcal{B}$ is the
set of bases of a 2-matroid over $(U,\Omega)$ if and only if for all
$B,B^{\prime}\in\mathcal{B}$ and $p \subseteq B \Delta B^{\prime}$ a skew
pair, there is a skew pair $q \subseteq B \Delta B^{\prime}$ such that $B
\Delta(p \cup q) \in\mathcal{B}$ (we allow $p=q$).
\end{lemma}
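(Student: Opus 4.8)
The plan is to prove both implications after unwinding the condition combinatorially. Since $|\omega|=2$ for every $\omega\in\Omega$, the skew pairs contained in $B\Delta B'$ are exactly the parts $\omega\in\Omega$ on which the transversals $B$ and $B'$ disagree, and for such a part $p=\omega$ the set $B\Delta p$ is the transversal obtained from $B$ by switching its choice on $\omega$. Hence $B\Delta(p\cup q)$ is $B$ with its choices switched on the two parts carrying $p$ and $q$ (on a single part if $p=q$), and the stated property is a symmetric basis-exchange statement confined to the parts where $B$ and $B'$ differ. For the forward direction I would argue inside a single matroid $(T,\mathcal{I}\cap 2^{T})$ of the given $2$-matroid; for the converse I would reconstruct a candidate multimatroid from $\mathcal{B}$ and verify the multimatroid axioms, the crux being the matroid property on each transversal.

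For the forward direction, take bases $B,B'\in\mathcal{B}$ and a skew pair $p=\omega$, with $x=B\cap\omega$ and $x'=B'\cap\omega$, and work in the matroid on the transversal $T=B\Delta p$. The set $I_{0}=B\setminus\{x\}=T\setminus\{x'\}$ is independent, being a subset of the basis $B$. If $T$ itself is independent then $T=B\Delta p\in\mathcal{B}$ and we may take $q=p$. Otherwise $T$ has a unique circuit $C$, the fundamental circuit of $x'$ with respect to $I_{0}$, and $x'\in C$. The independent set $T\cap B'=\{x'\}\cup(B\cap B')$ cannot contain the dependent set $C$, so there is $z\in C$ with $z\notin T\cap B'$; then $z\in C\setminus\{x'\}\subseteq I_{0}\subseteq B$ and $z\notin B'$, so $B$ and $B'$ disagree on the part $\omega_{z}$ of $z$, and we set $q=\omega_{z}\subseteq B\Delta B'$. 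Removing $z$ destroys the unique circuit, so $T\setminus\{z\}$ is independent and meets every part except $\omega_{z}$; applying the multimatroid exchange axiom to $T\setminus\{z\}$ and the skew pair $\omega_{z}$, and using that $(T\setminus\{z\})\cup\{z\}=T$ is dependent, forces $(T\setminus\{z\})\cup(B'\cap\omega_{z})=B\Delta(p\cup q)$ to be an independent transversal, hence a member of $\mathcal{B}$.

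For the converse, set $\mathcal{I}=\{I\in\mathcal{S}(\Omega)\mid I\subseteq B\text{ for some }B\in\mathcal{B}\}$ and aim to show $Z=(U,\Omega,\mathcal{I})$ is a $2$-matroid with base family $\mathcal{B}$. Two facts are immediate from the definition: $\mathcal{I}$ is hereditary, and the multimatroid exchange axiom holds because if $I\subseteq B\in\mathcal{B}$ and $\omega\cap I=\emptyset$ then $I\cup(B\cap\omega)\subseteq B$ is again in $\mathcal{I}$. Granting that each transversal carries a matroid, $Z$ is a nondegenerate multimatroid (as $|\omega|=2>1$), so its bases have cardinality $|\Omega|$; a size-$|\Omega|$ member of $\mathcal{I}$ is a transversal contained in some $B\in\mathcal{B}$ and therefore equal to it, while each $B\in\mathcal{B}$ is a size-$|\Omega|$ independent set and hence maximal, so the bases of $Z$ are exactly $\mathcal{B}$. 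The one remaining point is that for every transversal $T$ the hereditary family $\mathcal{I}\cap 2^{T}=\{I\subseteq T\mid I\subseteq B\text{ for some }B\in\mathcal{B}\}$ satisfies the matroid augmentation axiom, and this is where the exchange hypothesis on $\mathcal{B}$ must be used in a non-formal way; I expect it to be the main obstacle. My plan is a maximal-counterexample argument: given $I,I'\in\mathcal{I}\cap 2^{T}$ with $|I|<|I'|$ and witnesses $B_{1}\supseteq I$, $B_{2}\supseteq I'$ in $\mathcal{B}$, choose $B_{1}$ (among members of $\mathcal{B}$ containing $I$) to agree with $T$ on as many parts met by $I'$ as possible. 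If this agreement reaches a part met by $I'$ but not by $I$ we are done; otherwise there is a part $\omega^{\ast}$ met by $I'$ with $B_{1}\neq T=B_{2}$ on $\omega^{\ast}$, so $\omega^{\ast}\subseteq B_{1}\Delta B_{2}$, and the exchange condition applied to $B_{1},B_{2}$ at $p=\omega^{\ast}$ yields a member $B_{1}\Delta(\omega^{\ast}\cup q)$ of $\mathcal{B}$ that I want to show still contains $I$ while agreeing with $T$ on one more part met by $I'$, contradicting the maximal choice. The delicate point is that the partner part $q$ could coincide with a part met by $I$ and spoil the inclusion $I\subseteq B_{1}$; controlling this is the real work.

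Conceptually, this obstacle is clarified by the equivalence with delta-matroids. Fixing a reference transversal $T_{0}$ and sending each $T\in\mathcal{T}(\Omega)$ to $\{\omega\in\Omega\mid T\cap\omega\neq T_{0}\cap\omega\}$ identifies $\mathcal{T}(\Omega)$ with $2^{\Omega}$, switching the choice on a part becoming the toggling of the corresponding element; under this identification $\mathcal{B}$ becomes a set system $\mathcal{F}\subseteq 2^{\Omega}$ and the stated condition becomes exactly the symmetric-exchange axiom for $\mathcal{F}$. The lemma then says that $\mathcal{F}$ is a delta-matroid precisely when $\mathcal{B}$ is the base family of a $2$-matroid, and the augmentation obstacle above is precisely the standard consequence of symmetric exchange that the inclusion-maximal (equivalently maximum-size) feasible sets of a delta-matroid form the bases of a matroid. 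Transporting this fact back through the reference transversal $T$ in place of $T_{0}$ gives the desired matroid on each transversal, which is the single place where symmetric exchange is leveraged beyond formal manipulation.
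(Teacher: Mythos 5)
The paper does not prove this lemma at all --- it is imported verbatim from Bouchet \cite[Theorem~4.1]{B1} --- so there is no in-paper argument to compare yours against; I can only assess your proposal on its own terms. Your forward direction is correct and complete: the reduction to the unique circuit $C$ of $T=B\Delta p$ in the matroid on $T$, the choice of $z\in C\setminus(T\cap B')$ using the independence of $T\cap B'\subseteq B'$, and the final application of the multimatroid exchange axiom to $T\setminus\{z\}$ (which must produce $T\Delta\omega_z=B\Delta(p\cup q)$ because $T$ itself is dependent) all check out, including the degenerate case $q=p$ when $T$ is independent.

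The converse, however, has a genuine gap, and you have located it yourself: everything reduces to showing that $\mathcal{I}\cap 2^{T}$ satisfies the matroid augmentation axiom for every transversal $T$, and this is the only place where the symmetric-exchange hypothesis on $\mathcal{B}$ carries real weight. Your direct attempt is abandoned at exactly the critical point --- the exchange partner $q$ returned by the hypothesis may lie in a skew class met by $I$, in which case $B_{1}\Delta(\omega^{\ast}\cup q)$ no longer contains $I$ and the extremal choice of $B_{1}$ is not obviously improved. Your fallback is to observe that, after identifying $\mathcal{T}(\Omega)$ with $2^{\Omega}$ via the reference transversal $T$, the needed statement is precisely Bouchet's theorem that the inclusion-maximal feasible sets of a delta-matroid form the bases of a matroid (the ``upper matroid''). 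That identification is accurate, but it does not discharge the obligation: the upper-matroid theorem \emph{is} the nontrivial content of the ``if'' direction of this lemma, restated in delta-matroid language, so invoking it without proof leaves the hard half of the equivalence unproved rather than proved by a different route. To close the gap you would either need to prove the upper-matroid theorem (typically by an induction on $|B_{1}\Delta B_{2}|$ over \emph{all} skew classes, choosing among the witnesses $B_{1}\supseteq I$ one minimizing $|B_{1}\Delta B_{2}|$ rather than merely maximizing agreement on the classes met by $I'$, so that the troublesome case $q\subseteq B_{1}\Delta B_{2}$ with $\omega_{q}$ met by $I$ is excluded because $I\subseteq B_{1}\cap T$ forces $B_{1}$ and $B_{2}$ to agree there whenever $I'$ also meets $\omega_q$, and otherwise strictly decreases the measure) or cite it explicitly as an external result, in which case your argument is a reduction rather than a proof. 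The remaining bookkeeping in your converse --- heredity of $\mathcal{I}$, the skew-pair axiom, and the identification of the maximal members of $\mathcal{I}$ with $\mathcal{B}$ --- is all correct.
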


The following lemma is essentially from \cite{bouchet1987} from the context of delta-matroids. Recall the definition of transversal $q$-tuple from Subsection~\ref{ssec:mmatroids}.

\begin{lemma}
[\cite{bouchet1987}]\label{lem:repr_implies_2smatroid} Let $\tau=(T_{1}%
,T_{2})$ be a transversal 2-tuple of $\Omega$, let
\[
E=\bordermatrix{
& T_1 & T_2 \cr
& I & A
}
\]
be a matrix with $A$ a skew-symmetric matrix over some field $\mathbb{F}$, and
let $M$ be the column matroid of $E$. Then $Q=(M,\Omega)$ is a $2$-sheltering
matroid.
\end{lemma}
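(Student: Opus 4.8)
The plan is to verify directly the single axiom of Definition~\ref{shelter}, since once $Q=(M,\Omega)$ is known to be a sheltering matroid it is automatically a $2$-sheltering matroid (as $\Omega$ is a $2$-partition, $\mathcal{Z}(Q)$ is then a $2$-matroid). Because each skew class has exactly two elements, the only skew pair of $\omega\in\Omega$ is $\omega$ itself, so what must be shown is: for every subtransversal $I$ that is independent in $M$ and every $\omega_{0}=\{x,y\}$ with $\omega_{0}\cap I=\emptyset$, at least one of $I\cup\{x\}$, $I\cup\{y\}$ is independent in $M$.

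First I would fix coordinates. Using the skew classes to identify $T_{1}$, $T_{2}$, and the index set of $A$ all with $\Omega$, the column of $E$ attached to the $T_{1}$-element of $\omega$ is the standard vector $e_{\omega}$, and the column attached to the $T_{2}$-element is the $\omega$-th column $A_{\omega}$ of the skew-symmetric matrix $A$; write $\omega_{0}=\{x,y\}$ with $x$ giving column $e_{\omega_{0}}$ and $y$ giving column $A_{\omega_{0}}$. A subtransversal $I$ then amounts to a choice of disjoint $S_{1},S_{2}\subseteq\Omega$ (take $e_{\omega}$ for $\omega\in S_{1}$ and $A_{\omega}$ for $\omega\in S_{2}$), and projecting onto the rows outside $S_{1}$ annihilates the identity columns, so $I$ is independent in $M$ exactly when $A[\Omega\setminus S_{1},S_{2}]$ has full column rank $|S_{2}|$. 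Applying the same projection to the two augmented sets converts the two dependences into rank statements. From ``$I\cup\{x\}$ dependent'' (project off $S_{1}\cup\{\omega_{0}\}$) one extracts $c\in\mathbb{F}^{S_{2}}\setminus\{0\}$ with $w:=\sum_{\omega\in S_{2}}c_{\omega}A_{\omega}$ supported on $S_{1}\cup\{\omega_{0}\}$; here independence of $I$ forces $\lambda:=w_{\omega_{0}}\neq0$, and disjointness gives $w_{\omega''}=0$ for all $\omega''\in S_{2}$. From ``$I\cup\{y\}$ dependent'' the column $A_{\omega_{0}}$ restricted to rows $\Omega\setminus S_{1}$ lies in the span of $\{A_{\omega}:\omega\in S_{2}\}$, so there is $d\in\mathbb{F}^{S_{2}}$ with $A_{\omega',\omega_{0}}=\sum_{\omega\in S_{2}}d_{\omega}A_{\omega',\omega}$ for every $\omega'\in\Omega\setminus S_{1}$.

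The contradiction then comes from skew-symmetry. Starting from $\lambda=\sum_{\omega\in S_{2}}c_{\omega}A_{\omega_{0},\omega}$, I substitute $A_{\omega_{0},\omega}=-A_{\omega,\omega_{0}}$, then the relation from $I\cup\{y\}$ (valid since $S_{2}\subseteq\Omega\setminus S_{1}$), then skew-symmetry once more together with $w|_{S_{2}}=0$:
\[
\lambda=-\sum_{\omega\in S_{2}}c_{\omega}A_{\omega,\omega_{0}}=-\sum_{\omega,\omega''\in S_{2}}c_{\omega}d_{\omega''}A_{\omega,\omega''}=\sum_{\omega''\in S_{2}}d_{\omega''}\,w_{\omega''}=0\text{,}
\]
contradicting $\lambda\neq0$. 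Hence $I\cup\{x\}$ and $I\cup\{y\}$ cannot both be dependent, which is the required axiom.

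I expect the main obstacle to be the bookkeeping of the middle step — correctly reading off $c$ and $d$ from the two rank drops and tracking the supports, in particular using that $S_{1},S_{2},\{\omega_{0}\}$ are pairwise disjoint so that $w$ vanishes on $S_{2}$ — rather than the closing identity, which is short. It is worth stressing that skew-symmetry is indispensable here: it is precisely the passage $A_{\omega_{0},\omega}=-A_{\omega,\omega_{0}}$ that couples the two hypotheses, and for a general matrix $A$ the statement fails.
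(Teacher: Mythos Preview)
Your proof is correct and takes a genuinely different route from the paper's. The paper does not verify Definition~\ref{shelter} directly; instead it invokes Lemma~\ref{lem:char_bases_2matroid} (the symmetric-exchange characterization of the bases of a $2$-matroid) together with the principal pivot transform. Given two transversal bases $B_{1},B_{2}$ of $M$ and a skew pair $p\subseteq B_{1}\Delta B_{2}$, the paper pivots to a standard representation $(I\mid A')$ with respect to $B_{1}$---using that $A'$ remains skew-symmetric under principal pivot---and then locates the required skew pair $q$ via a nonsingular $2\times2$ principal submatrix of $A'$. You instead work with an arbitrary independent subtransversal and prove the covering axiom by a bare-hands linear-algebra computation: the job that the principal pivot transform does for the paper (transporting skew-symmetry to the new basis) is replaced in your argument by the bilinear identity $\lambda=\sum_{\omega''}d_{\omega''}w_{\omega''}$, obtained from two applications of $A^{T}=-A$. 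Your approach is more elementary and entirely self-contained for this lemma---no Lemma~\ref{lem:char_bases_2matroid}, no pivot machinery---while the paper's approach foregrounds the connection to delta-matroid representability and reuses tools (the principal pivot transform) that the surrounding section needs anyway.
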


\begin{proof}
To show that $\mathcal{Z}(Q)$ is a 2-matroid, we invoke
Lemma~\ref{lem:char_bases_2matroid}. Let $B_{1}$ and $B_{2}$ be bases of $M$,
which are transversals of $\Omega$, and let $p\subseteq B_{1}\Delta B_{2}$ be
a skew pair. By applying principal pivot transform, we have that $M$ is
represented by
\[
E^{\prime}=\bordermatrix{
& B_1 & T \cr
& I & A'
}
\]
for some skew-symmetric matrix $A^{\prime}$ and some $T\in\mathcal{T}(\Omega
)$. Let $p=\{p_{1},p_{2}\}$ with $p_{1}\in B_{1}$. If $B_{1}\Delta
p\notin\mathcal{B}$, then the diagonal entry $A^{\prime}[\{p_{2}\}]$ is zero.
Since $B_{2}$ is a basis, the column of $p_{2}$ in $E$ is nonzero. Thus there
is a $q_{2}\in T$ such that
\[
A^{\prime}[\{p_{2},q_{2}\}]=\bordermatrix{
& p_2 & q_2 \cr
p_2 & 0 & x \cr
q_2 & -x & y
}
\]
for some $x\in\mathbb{F}\setminus\{0\}$ and $y\in\mathbb{F}$. Since
$A^{\prime}[\{p_{2},q_{2}\}]$ is nonsingular, we have $B_{1}\Delta(p\cup
q)\in\mathcal{B}$, where $q$ is the skew pair containing $q_{2}$.
\end{proof}

We denote $Q$ of Lemma~\ref{lem:repr_implies_2smatroid} by $\mathcal{Q}%
(A,\tau,2)$.

\begin{lemma}
\label{lem:symmetric_iff_2sm} Let
\[
E=\bordermatrix{
& B & T \cr
& I & A
}
\]
be a matrix over $GF(2)$, where $A$ is zero-diagonal. Let $\Omega$ be the
natural 2-partition such that $B$ and $T$ are transversals of $\Omega$. Then
$E$ represents a 2-sheltering matroid if and only if $A$ is symmetric.
\end{lemma}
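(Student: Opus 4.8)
The plan is to prove the two implications separately, using Lemma~\ref{lem:repr_implies_2smatroid} for one direction and an explicit construction for the other. Throughout, I identify $B$ with $\{1,\dots,n\}$ via the identity block, write $\omega_k=\{b_k,t_k\}$ for the skew class with $b_k\in B$ and $t_k\in T$ matched by the natural bijection, and note that the column of $E$ indexed by $b_k$ is the standard basis vector $e_k$ while the column indexed by $t_k$ is $A[\cdot,t_k]\in GF(2)^B$. Since we work over $GF(2)$, a matrix is symmetric if and only if it is skew-symmetric.

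For the ``if'' direction, suppose $A$ is symmetric. Then $A$ is skew-symmetric over $GF(2)$, so Lemma~\ref{lem:repr_implies_2smatroid} applies with $\tau=(B,T)$ and shows immediately that $(M,\Omega)$ is a $2$-sheltering matroid; thus $E$ represents a $2$-sheltering matroid. (The zero-diagonal hypothesis is not needed here.)

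For the ``only if'' direction I would argue by contraposition: assuming $A$ is not symmetric, I exhibit a subtransversal witnessing a failure of the sheltering condition of Definition~\ref{shelter}. Since $A$ is zero-diagonal, non-symmetry forces a pair $i\neq j$ with, after possibly swapping $i$ and $j$, $A[b_i,t_j]=1$ and $A[b_j,t_i]=0$. Consider the subtransversal $I=\{t_j\}\cup\{b_k\mid k\neq i,j\}$, which is disjoint from $\omega_i$. The crux is the following three claims, all proved by reducing columns modulo $\{e_k\mid k\neq i,j\}$ and using the zero diagonal: (a) $I$ is independent, because $A[\cdot,t_j]$ reduces to $A[b_i,t_j]e_i+A[b_j,t_j]e_j=e_i\neq 0$; (b) $\operatorname{span}(I)=\{v\mid v_j=0\}$, since every generator of $\operatorname{span}(I)$ has vanishing $j$-coordinate (the vectors $e_k$ trivially, and $A[\cdot,t_j]$ because $A[b_j,t_j]=0$) while, by (a), $e_i\in\operatorname{span}(I)$, giving all of $e_1,\dots,e_n$ except $e_j$; and (c) both extensions of $I$ across $\omega_i$ are dependent: $b_i=e_i\in\operatorname{span}(I)$ by (b), while $t_i=A[\cdot,t_i]\in\operatorname{span}(I)$ precisely because its $j$-coordinate $A[b_j,t_i]$ equals $0$. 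Claim (c) contradicts the sheltering condition at $\omega_i$, so $(M,\Omega)$ is not a sheltering matroid.

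The routine verifications are the linear-algebra reductions in (a)--(c); the one genuinely delicate point, and the step I would be most careful about, is that both uses of the zero-diagonal hypothesis are essential: it is what makes $\operatorname{span}(I)$ exactly the coordinate hyperplane $\{v\mid v_j=0\}$ rather than a generic hyperplane, and this in turn is what simultaneously traps both $b_i$ and $t_i$ once the asymmetry $A[b_j,t_i]=0\neq A[b_i,t_j]$ is in force. I would also double-check the degenerate case $n=1$ (where $A=[0]$ is vacuously symmetric and $(M,\Omega)$ is trivially sheltering), so that the contrapositive construction, which needs two distinct indices, is only invoked when non-symmetry genuinely supplies them.
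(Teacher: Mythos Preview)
Your proof is correct and follows essentially the same approach as the paper's: the ``if'' direction via Lemma~\ref{lem:repr_implies_2smatroid}, and the ``only if'' direction by choosing an asymmetric pair $i,j$ and showing that the subtransversal $I=\{t_j\}\cup\{b_k\mid k\neq i,j\}$ is independent but cannot be extended across $\omega_i$. The paper's $I=(B\setminus(\omega_a\cup\omega_b))\cup\{a\}$ is exactly your $I$ under the relabelling $a\leftrightarrow j$, $b\leftrightarrow i$; your presentation simply adds the explicit identification of $\operatorname{span}(I)$ as the coordinate hyperplane, which the paper leaves implicit.
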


\begin{proof}
The if direction follows from Lemma~\ref{lem:repr_implies_2smatroid}. For the
only-if direction, assume to the contrary that $A$ is not symmetric. Then
there are $a,b\in T$ such that $A[\{a,b\}]$ is of the form
\[
\bordermatrix{
& a & b \cr
a & 0 & 0 \cr
b & 1 & 0
}.
\]
Consider $I=(B\setminus(\omega_{a}\cup\omega_{b}))\cup\{a\}$, where
$\omega_{x}$ is the skew class of $\Omega$ containing $x\in U$. Let $M$ be the
matroid represented by $E$. Note that $I$ is an independent set of $M$.
However, there is no $x\in\omega_{b}$ such that $I\cup\{x\}$ is an independent
set of $M$. Thus $(M,\Omega)$ is not a 2-sheltering matroid --- a contradiction.
\end{proof}

\begin{proposition}
\label{prop:off_diag_symmetric_repr} If a tight 2-sheltering matroid
$Q=(M,\Omega)$ is representable over some field $\mathbb{F}$, then
$r(M)=|\Omega|$. Consequently, $Q$ is strictly representable over $\mathbb{F}$.

Moreover, if the tight 2-sheltering matroid $Q$ is (strictly) representable
over $\mathbb{F}$ and $B\in\mathcal{T}(\Omega)$ is a basis of $M$, then for
every $\mathbb{F}$-standard representation
\[
E=\bordermatrix{
& B & T \cr
& I & A
}
\]
of $M$ with respect to $B$, we have that $A$ is a zero-diagonal $T\times T$
matrix with $T=E-B\in\mathcal{T}(\Omega)$.

In particular, if $\mathbb{F}=GF(2)$ then $A$ is symmetric and zero-diagonal.
\end{proposition}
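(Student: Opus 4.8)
The plan is to reduce everything to a single, elementary fact about the tightness of $\mathcal{Z}(Q)$, namely its effect on the rank function of $M$. Write $r$ for the rank function of $M$. For a subtransversal $S$ the restriction $Z[S]=\mathcal{Z}(Q)[S]$ is a $1$-matroid, in fact just $M\mid S$, so its rank equals $r(S)$; hence tightness says precisely that for every $S\in\mathcal{S}(\Omega)$ with $|S|=|\Omega|-1$ and unique disjoint skew class $\omega$, there is an $x\in\omega$ with $r(S\cup\{x\})=r(S)$. I would first prove $r(M)=|\Omega|$. The inequality $r(M)\geq|\Omega|$ is already recorded, since any basis $B$ of the nondegenerate multimatroid $\mathcal{Z}(Q)$ is a transversal of $\Omega$ that is independent in $M$ and has cardinality $|\Omega|$. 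For the reverse inequality I argue by contradiction: if $r(M)>|\Omega|=r(B)$, then $B$ is not spanning, so there is $y\in U$ with $r(B\cup\{y\})=|\Omega|+1$. Because $B$ is a transversal such a $y$ cannot lie in $B$, so $y$ is the element $t'$ of its skew class $\omega'=\{b',t'\}$ outside $B$, with $b'\in B$. Taking $S=B\setminus\{b'\}$, a subtransversal of size $|\Omega|-1$ with unique disjoint skew class $\omega'$, we get $r(S)=|\Omega|-1$, while $r(S\cup\{b'\})=r(B)=|\Omega|$ and, since $B\cup\{t'\}$ is independent (it has $|\Omega|+1$ elements of rank $|\Omega|+1$), its subset $S\cup\{t'\}$ is independent of size $|\Omega|$, so $r(S\cup\{t'\})=|\Omega|$. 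Thus neither element of $\omega'$ restores the rank of $S$, contradicting tightness; hence $r(M)=|\Omega|$, and since $Q$ is representable over $\mathbb{F}$ it is strictly representable.

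For the second assertion, let $B\in\mathcal{T}(\Omega)$ be a basis of $M$ and $E=(I\mid A)$ a standard representation of $M$ with respect to $B$. Then $T=E-B=U\setminus B$ is a transversal because $B$ is one, and pairing each $b\in B$ with its skew partner $t\in T$ lets me regard $A$ as a $T\times T$ matrix. To see that the diagonal entry indexed by a skew pair $\{b,t\}$ vanishes, I apply tightness to $S=B\setminus\{b\}$, whose unique disjoint skew class is exactly $\{b,t\}$. Since $r(S\cup\{b\})=r(B)=|\Omega|>|\Omega|-1=r(S)$, the element of $\{b,t\}$ restoring the rank must be $t$, so $r(S\cup\{t\})=r(S)$, i.e.\ the column of $t$ in $E$ lies in the span of the columns of $S=B\setminus\{b\}$. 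In the standard representation this forces the coefficient on $b$, which is precisely the diagonal entry of $A$ at the pair $\{b,t\}$, to be zero. Running this over all skew pairs shows that $A$ is zero-diagonal, establishing the middle claim.

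Finally, over $GF(2)$ the symmetry of $A$ follows at once from Lemma~\ref{lem:symmetric_iff_2sm}: the matrix $E=(I\mid A)$ represents the $2$-sheltering matroid $Q$ and $A$ is zero-diagonal, so that lemma yields $A$ symmetric. The calculations here are all routine; the only genuine content is the bookkeeping that turns Bouchet's multimatroid tightness into the matroid rank statement $r(S\cup\{x\})=r(S)$ and the choice of the subtransversals $S=B\setminus\{b\}$. The step I would treat most carefully is the identification $Z[S]=M\mid S$ for subtransversals $S$, since the whole argument rests on it; once that is in place, both the rank bound $r(M)=|\Omega|$ and the zero-diagonal property fall out of the very same application of tightness.
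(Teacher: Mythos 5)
Your proof is correct and follows essentially the same route as the paper: both arguments apply tightness to the subtransversals $B\setminus\omega$ for a transversal basis $B$, forcing the non-basis element of each skew class to lie in the span of $B\setminus\omega$, which yields the zero diagonal, and both then invoke Lemma~\ref{lem:symmetric_iff_2sm} for symmetry over $GF(2)$. The only cosmetic difference is that you establish $r(M)=|\Omega|$ by a separate contradiction argument, whereas the paper reads it off directly by showing that the extra block $C$ in an extended representation vanishes column by column from the very same application of tightness.
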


\begin{proof}
Let $Q=(M,\Omega)$ be a tight 2-sheltering matroid representable over
$\mathbb{F}$. Let $B$ be a basis of $Q$. Hence $B$ is an independent set of
$M$. Thus, $M$ has a $GF(2)$-representation
\[
E=\bordermatrix{
& B & T \cr
& I & A \cr
& 0 & C
}
\]
for some matrices $A$ and $C$ and where $I$ and $0$ are the identity matrix
and zero matrix of suitable size. Let $a\in T$. Let $\omega_{a}$ be the skew
class of $\Omega$ containing $a$. Then the rank of $M[B\setminus\omega_{a}]$
is smaller than the rank of $M[B]$. Since $Q$ is tight, the rank of
$M[(B\setminus\omega_{a})\cup\{a\}]$ is equal to that of $M[B\setminus
\omega_{a}]$. Hence both (1) the nonzero diagonal entry of $A$ at index $a\in
T$ is zero and (2) the column of $C$ corresponding to $a$ is zero.
Consequently, $C$ is the zero matrix and $A$ is zero-diagonal. Since $C$ is
the zero matrix, $r(M)=|\Omega|$.

It follows from Lemma~\ref{lem:symmetric_iff_2sm} that if $\mathbb{F}=GF(2)$,
then $A$ is symmetric.
\end{proof}

In Subsection~\ref{ssec:strongly_binary} we explain that
Proposition~\ref{prop:off_diag_symmetric_repr} for the case $\mathbb{F} =
GF(2)$ is closely related to Property~5.2 of Bouchet and Duchamp
\cite{Bouchet_1991_67}.

We remark that Proposition~\ref{prop:off_diag_symmetric_repr} is also closely related to the following result shown in \cite{bouchet1987} in the context of even delta-matroids (even delta-matroids correspond to tight $2$-matroids by \cite[Theorem~5.3]{B3}). For convenience we also provide a short proof without using delta-matroids.
\begin{proposition}[\cite{bouchet1987}]
Let $Q$ be a 2-sheltering matroid having $\mathbb{F}$-representation
\[
E=\bordermatrix{
& T_1 & T_2 \cr
& I & A
}
\]
with $A$ skew-symmetric. Then $A$ is zero-diagonal if and only if $Q$ is tight.
\end{proposition}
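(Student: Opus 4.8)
The plan is to prove the two implications separately, since the forward one is essentially already available. Throughout, write $n=|\Omega|$ and $\Omega=\{\omega_1,\dots,\omega_n\}$ with $\omega_i=\{a_i,b_i\}$, $a_i\in T_1$ and $b_i\in T_2$, so that the column of $a_i$ in $E$ is the standard basis vector $e_i$ and the column of $b_i$ is the $i$-th column $A_{\cdot i}$ of $A$. By Lemma~\ref{lem:repr_implies_2smatroid}, $Q=(M,\Omega)$ is a $2$-sheltering matroid; moreover $T_1$ is a transversal that is a basis of $M$ (its columns form the identity block), and $E$ is a standard representation of $M$ with respect to $T_1$. For the ``$Q$ tight $\Rightarrow$ zero-diagonal'' direction I would simply invoke Proposition~\ref{prop:off_diag_symmetric_repr} with $B=T_1$: since $Q$ is a tight $2$-sheltering matroid representable over $\mathbb{F}$ and $E$ is an $\mathbb{F}$-standard representation with respect to the transversal basis $T_1$, that proposition yields at once that $A$ is zero-diagonal.

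The substance is the converse: assuming $A$ is skew-symmetric and zero-diagonal, show $Q$ is tight. Nondegeneracy is automatic because $\Omega$ is a $2$-partition, so only the rank condition must be checked. Fix $S\in\mathcal{S}(\Omega)$ with $|S|=n-1$, and let $\omega_i=\{a_i,b_i\}$ be the unique skew class disjoint from $S$. Writing $Z=\{j\ne i:a_j\in S\}$ and $X=\{j\ne i:b_j\in S\}$, the columns indexed by $S$ are $\{e_j:j\in Z\}\cup\{A_{\cdot j}:j\in X\}$. Since $r(Q[S])=r(Q[S\cup\{x\}])$ precisely when the column of $x$ lies in $\operatorname{span}(S)$, tightness at $(S,\omega_i)$ reduces to showing that $e_i\in\operatorname{span}(S)$ or $A_{\cdot i}\in\operatorname{span}(S)$. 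The natural move is to quotient out $\operatorname{span}\{e_j:j\in Z\}\subseteq\operatorname{span}(S)$, i.e.\ to project onto the coordinates $Y=X\cup\{i\}$; since the kernel of this projection is contained in $\operatorname{span}(S)$, membership in $\operatorname{span}(S)$ is equivalent to membership in the column space of $P:=A[Y,X]$, the first $|X|$ columns of the principal submatrix $A[Y]$.

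With the block decomposition of the alternating matrix
\[
A[Y]=\bordermatrix{ & X & i \cr X & A[X] & -r^{\mathsf T} \cr i & r & 0 },
\]
where $r=A[\{i\},X]$ and the lower-right entry is $0$ because $A$ is zero-diagonal, the projections of $e_i$ and $A_{\cdot i}$ are respectively the last coordinate vector $\binom{0}{1}$ and the last column $\binom{-r^{\mathsf T}}{0}$ of $A[Y]$. The crux is then a purely linear-algebraic dichotomy for $P=\binom{A[X]}{r}$. If $r\notin\operatorname{rowspace}(A[X])=(\ker A[X])^{\perp}$, then some $w\in\ker A[X]$ has $rw=1$, whence $Pw=\binom{0}{1}$ and $e_i\in\operatorname{span}(S)$. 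If instead $r=u^{\mathsf T}A[X]\in\operatorname{rowspace}(A[X])$, then skew-symmetry gives $A[X]u=-r^{\mathsf T}$, while the alternating identity $u^{\mathsf T}A[X]u=0$ gives $ru=0$; hence $Pu=\binom{-r^{\mathsf T}}{0}$ and $A_{\cdot i}\in\operatorname{span}(S)$. In either case one of $a_i,b_i$ lies in $\operatorname{span}(S)$, establishing the rank condition and hence tightness.

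I expect this final dichotomy to be the main point, and the place where the hypothesis does its work: the zero-diagonal assumption enters exactly twice --- once to force the $(i,i)$ entry to vanish, so that the projection of $A_{\cdot i}$ has a zero in its last coordinate, and once through the identity $u^{\mathsf T}A[X]u=0$ for the alternating principal submatrix $A[X]$ --- and it is precisely these two facts that make the two cases exhaustive. Without zero diagonal the second computation would produce $Pu=\binom{-r^{\mathsf T}}{u^{\mathsf T}A[X]u}$ with a possibly nonzero last entry, breaking the argument and (consistently with the statement) allowing tightness to fail.
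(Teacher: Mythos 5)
Your proof is correct. The direction ``$Q$ tight $\Rightarrow$ $A$ zero-diagonal'' coincides with the paper's: both simply defer to Proposition~\ref{prop:off_diag_symmetric_repr} applied to the transversal basis $T_1$. For the substantive converse you take a genuinely different route. The paper invokes Bouchet's basis-exchange characterization of tight $2$-matroids from \cite[Theorem~4.2]{B3} ($Z$ is tight iff $B\Delta\omega$ is never a basis), applies a principal pivot transform to obtain a zero-diagonal skew-symmetric standard representation with respect to an arbitrary transversal basis $B$, and reads a circuit of $B\Delta\omega$ off the vanishing diagonal entry. You instead verify the rank condition in the definition of tightness directly for every subtransversal $S$ of size $|\Omega|-1$: after quotienting by the coordinates of the $e_j$'s in $S$, membership in $\operatorname{span}(S)$ reduces to the column space of $P=A[Y,X]$, and the dichotomy on whether $r=A[\{i\},X]$ lies in the row space of the alternating matrix $A[X]$ --- using $(\ker A[X])^{\perp}=\operatorname{rowspace}(A[X])$ in one case and the alternating identity $u^{\mathsf T}A[X]u=0$ in the other --- shows that one of the two columns of the missing skew class always lies in $\operatorname{span}(S)$; both halves of the dichotomy check out, and together they are exhaustive. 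Your argument is self-contained, needing neither the imported tightness criterion nor the invariance of zero-diagonal skew-symmetry under principal pivot transform, at the price of a somewhat longer linear-algebra computation; the paper's version is shorter but leans on those two external facts. Both proofs correctly isolate the zero-diagonal hypothesis (entering twice in yours, once per case) as exactly the point where the argument would otherwise break.
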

\begin{proof}
The if direction follows from Proposition~\ref{prop:off_diag_symmetric_repr}. Note that for the if direction skew-symmetry is not needed.

For the only-if direction we use the fact that a 2-matroid $Z$ is tight if and only if for any basis $B$ and skew class $\omega$ of $Z$, $B \Delta \omega$ is not a basis (see \cite[Theorem~4.2]{B3}). Let $Z = \mathcal{Z}(Q)$, let $B$ be a basis of $Z$, and $\omega$ be a skew class of $Z$. Assume that skew-symmetric matrix $A$ is zero-diagonal. By applying principal pivot transform to $E$, we have that $Q$ is
represented by
\[
E^{\prime}=\bordermatrix{
& B & T \cr
& I & A'
}
\]
for some zero-diagonal skew-symmetric matrix $A^{\prime}$ and some $T\in\mathcal{T}(\Omega
)$. Let $\omega=\{x,y\}$ with $x\in B$. Since the diagonal entry of $A'$ corresponding to $y$ is zero, there is a circuit $C \subseteq B \Delta \omega$ containing $y$. Hence, $B \Delta \omega$ is not a basis. We conclude that $Z$ is tight, and therefore $Q$ is tight.
\end{proof}


The next example illustrates that not every binary 2-matroid is strictly
binary. Therefore, the condition of tightness in
Proposition~\ref{prop:off_diag_symmetric_repr} is essential.

\begin{example}
Let $Z$ be the $2$-matroid over $(U,\Omega)$, where $U=\{a^{\prime},b^{\prime
},c^{\prime},a,b,c\}$, $\Omega=\{\{a^{\prime},a\},\{b^{\prime},b\},\{c^{\prime
},c\}\}$ and the family of circuits $\mathcal{C}$ of $Z$ is $\{\{a^{\prime
},b^{\prime},c^{\prime}\},\{a,b,c\}\}$. Clearly, $Z$ is sheltered by the
binary matroid $M$ with ground set $U$ and $\mathcal{C}$ as the family of
circuits. The rank of $M$ is $4$. We argue that $M$ is the unique binary
matroid that shelters $Z$. Indeed, since $|U|=6$, a binary matroid $M^{\prime
}$ that shelters $Z$ cannot have the Fano matroid (or its dual), the cocycle
matroid of $K_{3,3}$, or the cocycle matroid of $K_{5}$ (which have ground set
sizes $7$, $9$, and $10$, respectively) as a minor. Hence $M^{\prime}$ is
graphic. It is easy to see that any graphic matroid of ground set size $6$
with two disjoint triangles is isomorphic to $M$; as the ground sets of $M$
and $M^{\prime}$ coincide and the elements of $\mathcal{C}$ are circuits in
both $M$ and $M^{\prime}$, it follows that $M=M^{\prime}$. Since $M$ is the
unique binary matroid that shelters $Z$, there cannot be
a binary matroid of rank $3$ that shelters $Z$. Thus, $Z$ is binary but not
strictly binary.
\end{example}

\subsection{Binary tight 3-matroids and isotropic matroids}

The main results of this subsection are Theorems~\ref{thm:char_bt3sm} and
\ref{thm:3shelt_isotrm} which characterize binary tight $3$-matroids and
isotropic matroids, respectively.

First we need the following result of \cite{BH3}.

\begin{lemma}
[Theorem~13 of \cite{BH3}]\label{lem:unique_3m} Let $\Omega$ be a partition of
some finite set $U$ with for each $\omega\in\Omega$, $|\omega| \geq3$. Let
$T\in\mathcal{T}(\Omega)$. If $Z$ is a multimatroid over $(U\setminus
T,\Omega^{\prime})$ with $\Omega^{\prime}=\{\omega\setminus T \mid\omega
\in\Omega\}$, then there is at most one tight multimatroid $Z^{\prime}$ over
$(U,\Omega)$ with $Z^{\prime}-T=Z$.
\end{lemma}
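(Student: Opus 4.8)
The plan is to prove uniqueness by a minimal-counterexample argument on the number of elements of $T$ that a subtransversal meets. Suppose $Z'_1,Z'_2$ are tight multimatroids over $(U,\Omega)$ with $Z'_1-T=Z=Z'_2-T$. By the definition of deletion they agree, as to independence, on every subtransversal contained in $U\setminus T$. So if $Z'_1\neq Z'_2$ I may choose $k\ge 1$ minimal such that $Z'_1$ and $Z'_2$ disagree on some subtransversal meeting $T$ in exactly $k$ elements; they then agree on all subtransversals meeting $T$ in fewer elements. Replacing a witnessing disagreement by a circuit contained in it, I may assume the witness is a set $C$ that is a circuit of, say, $Z'_2$, is independent in $Z'_1$, satisfies $|C\cap T|=k$, and all of whose proper subsets are independent in both (so $|C\cap T|=k$ by minimality). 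Fix $t\in C\cap T$, lying in a class $\omega$; since $T$ is a transversal, $t$ is the only element of $T$ in $\omega$, so all other elements of $\omega$ lie outside $T$. Write $I=C\setminus\{t\}$.

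The two tools will be the exchange axiom of Definition~\ref{def:multimatroid} and tightness. Applied to the independent set $I$ (which misses $\omega$) and the skew pairs $\{t,w\}$, the exchange axiom shows that \emph{at most one} $w\in\omega$ can make $I\cup\{w\}$ dependent; as $C=I\cup\{t\}$ is dependent in $Z'_2$, every $I\cup\{w\}$ with $w\in\omega\setminus\{t\}$ is independent in $Z'_2$. Each such set meets $T$ in only $k-1$ elements, so by minimality it is independent in $Z'_1$ as well. Since $C$ is independent in $Z'_1$ by assumption, relative to $I$ \emph{no} element of $\omega$ is special in $Z'_1$: all $I\cup\{w\}$, $w\in\omega$, are independent there. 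If $C$ were a full transversal, then $I$ would be a spanning subtransversal of size $|\Omega|-1$ missing $\omega$, and tightness of $Z'_1$ would force some $w\in\omega$ with $I\cup\{w\}$ dependent -- an immediate contradiction.

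The main work, then, is to reduce to this transversal case. I would extend $I$ to a spanning subtransversal $\hat I$ of size $|\Omega|-1$ missing $\omega$, choosing the new elements in $U\setminus T$; this is possible because $|\omega'|\ge 3$ for each class guarantees at least two non-$T$ elements, while the exchange axiom forbids more than one of them from breaking independence, so one extends greedily keeping $\hat I\cap T=I\cap T$. As $\hat I$ meets $T$ in $k-1$ elements it is independent in both $Z'_1,Z'_2$; and since $t\in\mathrm{cl}_{Z'_2}(I)\subseteq\mathrm{cl}_{Z'_2}(\hat I)$, tightness makes $t$ the unique special element of $\hat I$ in $Z'_2$, whence every $\hat I\cup\{w\}$ with $w\neq t$ is independent in $Z'_2$, has $T$-count $k-1$, and so is independent in $Z'_1$ too. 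Tightness of $Z'_1$ then forces $t$ to be the unique special element of $\hat I$ in $Z'_1$ as well, so $\hat I\cup\{t\}$ is dependent in both. Thus on the transversal $\hat I\cup\{t\}$ the two multimatroids restrict to single-element extensions of the free matroid on $\hat I$ by $t$, each placing $t$ in the closure of $\hat I$ but with different attachment flats: in $Z'_2$ the minimal such flat is $I$, whereas in $Z'_1$ it is not $I$, since $C$ is independent there.

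Closing this gap -- showing the two extensions coincide -- is what I expect to be the technical heart of the argument. The idea is to recover the attachment flat of $t$ one class at a time: for a class $\omega_a\neq\omega$ and the element $a\in\hat I\cap\omega_a$, apply tightness to the spanning subtransversal $(\hat I\cup\{t\})\setminus\{a\}$ (which misses $\omega_a$) and compare the special element in $Z'_1$ and $Z'_2$. When $a\in T$ this set meets $T$ in only $k-1$ elements, so minimality yields agreement and pins down whether $a$ belongs to the attachment flat (forcing $\hat I\cap T\subseteq I$ to lie in it in both); the elements $a\notin T$ must be handled by routing the relevant spanning sets through $U\setminus T$ to keep their $T$-count below $k$, again using $|\omega|\ge 3$ for the room. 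Carried through, this propagation should force the two attachment flats -- equivalently, the whole modular cut of $t$ -- to coincide, contradicting the choice of $C$ and completing the proof. Turning the single forced agreement at $\hat I\cup\{t\}$ into agreement across the entire modular cut is the delicate step, and the one I would expect to demand the most care.
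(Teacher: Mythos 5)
The paper itself does not prove this lemma --- it is imported as Theorem~13 of \cite{BH3} --- so the only question is whether your argument stands on its own. Its first two thirds do: reducing a minimal disagreement at level $k=|C\cap T|$ to a circuit $C=I\cup\{t\}$ of $Z_2'$ that is independent in $Z_1'$, greedily extending $I$ through $U\setminus T$ to an independent near-transversal $\hat I$ missing $\omega$, and playing the exchange axiom against tightness to conclude that $\hat I\cup\{t\}$ is dependent in both $Z_1'$ and $Z_2'$, are all correct. But the step you defer --- showing that the fundamental circuits $C$ and $C_1$ of $t$ with respect to $\hat I$ in $Z_2'$ and $Z_1'$ coincide --- is a genuine gap, and the mechanism you sketch cannot close it. Deciding whether $a\in C_1$ for $a\in\hat I\setminus T$ amounts to deciding whether $(\hat I\cup\{t\})\setminus\{a\}$ is independent in $Z_1'$; every such set still contains $t$ together with all of $I\cap T$, hence meets $T$ in exactly $k$ elements, and no rerouting through $U\setminus T$ can lower that count while $t$ remains present. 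So the minimality of $k$ says nothing about these sets, and tightness applied at the near-transversal $(\hat I\cup\{t\})\setminus\{a\}$ only constrains its extensions into $\omega_a$, not the set itself. (Your comparison does work for $a\in\hat I\cap T$, but those elements already lie in $C$, so nothing new is learned.)

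The repair is to avoid localizing the fundamental circuit altogether and instead determine the status of $I\cup\{t\}$ by an upward recursion on set size, using exchange at every intermediate stage rather than only at $I$. For any subtransversal $S\ni t$ and any skew class $\omega'$ disjoint from $S$, choose $y,z\in\omega'\setminus T$ (possible since $|\omega'|\geq 3$); the exchange axiom applied to $S$ and the skew pair $\{y,z\}$ gives that $S$ is dependent if and only if both $S\cup\{y\}$ and $S\cup\{z\}$ are dependent, and these extensions have the same $T$-count as $S$. Iterating from $S=I\cup\{t\}$, the status of $C$ is expressed purely in terms of the statuses of transversals $\hat S\cup\{t\}$ with $\hat S\supseteq I$ and $\hat S\cap T=I\cap T$; for each of these, either $\hat S$ is dependent (a $T$-count $k-1$ question, settled by minimality) or $\hat S$ is independent and your tightness-plus-exchange argument for $\hat I\cup\{t\}$ applies verbatim. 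Hence $Z_1'$ and $Z_2'$ agree on $C$ after all, contradicting the choice of $C$ and completing the proof. With that substitution your argument becomes a complete, self-contained proof of the cited result.
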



\begin{theorem}
\label{thm:char_bt3sm} Let $Z=(U,\Omega,\mathcal{I})$ be a $3$-matroid. The
following statements are equivalent.

\begin{enumerate}
\item \label{item:tight_bin} $Z$ is tight and binary.

\item \label{item:tight_strict_bin} $Z$ is tight and strictly binary.

\item \label{item:isotropic} $Z=\mathcal{Z}(Q)$ for some $Q=(M,\Omega)$ where
$M$ can be represented by the matrix
\[
\bordermatrix{
& T_1 & T_2 & T_3\cr
& I & A & A+I
}\text{,}
\]
for some $V\times V$-symmetric matrix $A$ over $GF(2)$ and some transversal
3-tuple $\tau=(T_{1},T_{2},T_{3})$ of $(U,\Omega)$.
\end{enumerate}
\end{theorem}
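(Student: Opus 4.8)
The plan is to prove the cycle of implications $(\ref{item:isotropic}) \Rightarrow (\ref{item:tight_strict_bin}) \Rightarrow (\ref{item:tight_bin}) \Rightarrow (\ref{item:isotropic})$. The first two implications are short, and essentially all of the work lies in recovering the concrete matrix form of item~(\ref{item:isotropic}) from the abstract hypothesis that $Z$ is tight and binary.

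For $(\ref{item:isotropic}) \Rightarrow (\ref{item:tight_strict_bin})$, I would read the symmetric matrix $A$ over $GF(2)$ as the adjacency matrix of a looped simple graph $G$ (off-diagonal entries recording edges, diagonal entries recording loops), so that the displayed matrix is exactly $IAS(G)$ and $M = M[IAS(G)]$. Then $Q = (M,\Omega)$ is a tight $3$-sheltering matroid by \cite[Proposition~41]{Tnewnew}, so $Z = \mathcal{Z}(Q)$ is tight; and since the displayed matrix has $|V| = |\Omega|$ rows and an identity block, $r(M) = |\Omega|$, so $Q$ is strict and $GF(2)$-representable, witnessing that $Z$ is strictly binary. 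The implication $(\ref{item:tight_strict_bin}) \Rightarrow (\ref{item:tight_bin})$ is immediate, since a strict sheltering matroid is in particular a sheltering matroid, so strictly binary implies binary.

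The heart of the argument is $(\ref{item:tight_bin}) \Rightarrow (\ref{item:isotropic})$, and I would organize it around the uniqueness statement of Lemma~\ref{lem:unique_3m}. Fix any transversal $T_3 \in \mathcal{T}(\Omega)$ and form the $2$-matroid $Z_2 = Z - T_3$ over $(U \setminus T_3, \Omega')$, where $\Omega' = \{\omega \setminus T_3 \mid \omega \in \Omega\}$. Since $Z$ is binary it is sheltered by some $Q_0 = (M_0,\Omega)$ with $M_0$ representable over $GF(2)$; then $M_0 - T_3$ is $GF(2)$-representable and shelters $\mathcal{Z}(Q_0 - T_3) = \mathcal{Z}(Q_0) - T_3 = Z - T_3 = Z_2$, so $Z_2$ is a binary $2$-matroid. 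The key step is to produce a \emph{symmetric} representation of $Z_2$: there is a transversal $T_1 \in \mathcal{T}(\Omega')$ and a symmetric $V \times V$ matrix $A$ over $GF(2)$ (with $V$ indexing the classes of $\Omega$) so that $(I \mid A)$, with identity columns $T_1$ and matrix columns $T_2 := (U \setminus T_3) \setminus T_1$, represents a sheltering matroid of $Z_2$. With such an $A$ in hand I would form the column matroid $M'$ of $(I \mid A \mid A+I)$, with the three blocks indexed by $T_1$, $T_2$ and $T_3$, and set $Q' = (M',\Omega)$. By the already-proved implication $(\ref{item:isotropic}) \Rightarrow (\ref{item:tight_strict_bin})$, $Z' := \mathcal{Z}(Q')$ is a tight $3$-matroid; moreover $Z' - T_3$ is represented by $(I \mid A)$ and hence equals $Z_2$. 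Now both $Z$ and $Z'$ are tight $3$-matroids over $(U,\Omega)$ whose deletion of $T_3$ is $Z_2$, so Lemma~\ref{lem:unique_3m} forces $Z = Z'$, which exhibits $Z$ in the required form with transversal $3$-tuple $\tau = (T_1,T_2,T_3)$.

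The main obstacle is precisely the key step above: showing that the binary $2$-matroid $Z_2$ admits a symmetric $GF(2)$-representation $(I \mid A)$, where $A$ may have nonzero diagonal, corresponding to looped vertices. One direction of this correspondence is already in hand, since Lemma~\ref{lem:repr_implies_2smatroid} shows that every symmetric $A$ does yield a $2$-sheltering matroid; but the direction needed here, that \emph{every} binary $2$-matroid arises this way, is Bouchet's representability theorem for binary delta-matroids \cite{bouchet1987}, transported through the equivalence between binary delta-matroids and binary $2$-matroids. One cannot shortcut this by taking an arbitrary standard representation $(I \mid R)$ of a sheltering matroid of $Z_2$ and hoping that $R$ is symmetric: Example~\ref{example} already shows that a single $2$-matroid has many nonisomorphic sheltering matroids, and without tightness of the slice---which genuinely fails at looped vertices---the zero-diagonal symmetry criterion of Lemma~\ref{lem:symmetric_iff_2sm} does not apply. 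Once the symmetric representation is secured, the remainder is bookkeeping about deletions combined with the uniqueness lemma.
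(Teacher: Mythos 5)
Your overall architecture (cycle of implications, with Lemma~\ref{lem:unique_3m} closing the argument) is the right one, and the first two implications plus the final uniqueness step match the paper. But the step you yourself identify as the heart of the matter --- that the binary $2$-matroid $Z_2 = Z - T_3$ admits a symmetric $GF(2)$-representation $(I \mid A)$ --- is a genuine gap, and the theorem you invoke to fill it does not exist in the generality you need. ``Binary'' in Bouchet's delta-matroid sense \emph{means} representable by a symmetric matrix; there is no theorem upgrading representability in the sheltering sense (a $GF(2)$-representable matroid whose subtransversal independent sets give $Z_2$) to symmetric representability. Indeed the paper's own example built from Bouchet and Duchamp's $S_2$ (Subsection~\ref{ssec:strongly_binary}) exhibits a \emph{strictly} binary $2$-matroid that is not strongly binary, i.e., has no symmetric representation at all. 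So the claim ``every binary $2$-matroid arises this way'' is false, and your argument has discarded the tightness of $Z$ at exactly the point where it is indispensable: for an arbitrary choice of $T_3$ the slice $Z - T_3$ need not be tight, and nothing you retain about it distinguishes it from a general binary $2$-matroid.

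The paper's proof avoids this by not choosing $T_3$ first. It takes $T_1$ to be a basis of $Z$, and then uses tightness of $Z$ to \emph{define} $T_2$ as the set of elements $u \in \omega$ for which $(T_1 \setminus \omega) \cup \{u\}$ is not a basis; tightness guarantees this $T_2$ is a transversal, and $T_3$ is whatever remains. With respect to the basis $T_1$ the standard representation then automatically has zero diagonal on the $T_2$ block and a vanishing lower block (the same rank computation as in Proposition~\ref{prop:off_diag_symmetric_repr}), so $r(M) = |\Omega|$, and only now does Lemma~\ref{lem:symmetric_iff_2sm} --- which requires the zero diagonal --- force $A$ to be symmetric. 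In other words, the symmetric representation of the $2$-matroid slice is not imported from a representability theorem; it is manufactured by letting the tightness of the $3$-matroid pick out the one slice (delete $T_3$, keep $T_1 \cup T_2$) that is tight, hence even, hence covered by the zero-diagonal criterion. If you want to salvage your version, you would have to prove that for a tight binary $3$-matroid there exists \emph{some} transversal whose deletion leaves a tight (equivalently, zero-diagonal representable) binary $2$-matroid --- and the natural way to do that is precisely the paper's construction of $T_2$ from a basis $T_1$. After that point your use of $(I \mid A \mid A+I)$ and Lemma~\ref{lem:unique_3m} is correct and coincides with the paper.
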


\begin{proof}
Trivially, Statement~\ref{item:tight_strict_bin} implies
Statement~\ref{item:tight_bin}.

Assume that Statement~\ref{item:isotropic} holds, and let $G$ be the looped
simple graph whose adjacency matrix is $A$. We recall from~\cite{Tnewnew}
that $M[IAS(G)]$ is a tight 3-sheltering matroid with $\Omega$ the partition
of $W(G)$ into vertex triples. Thus $Q=(M,\Omega)$ is a tight $3$-sheltering
matroid. Note that $Q$ is strictly binary since $M$ is binary and $r(M) =
|\Omega|$. Hence Statement~\ref{item:tight_strict_bin} holds.

Assume now that the Statement~\ref{item:tight_bin} holds. Then $Z=\mathcal{Z}%
(Q)$ for some $Q=(M,\Omega)$ such that $M$ is binary, and $Z$ is tight. Let
$T_{1}$ be a basis of $Z$. Let $T_{2}=\{u\in U\mid(T_{1}\setminus\omega
)\cup\{u\}\mbox{ with }u\in\omega\in\Omega\mbox{ is not a basis of }Z\}$.
Since $Z$ is tight, $T_{2}$ is a transversal. Since $T_{1}$ is a basis of $Z$,
$T_{1}$ is an independent set of $Q$. Let
\[
\bordermatrix{
& T_1 & T_2 & T_3 \cr
& I & A & C \cr
& 0 & B & D
}
\]
be a representation of $M$ with respect to $T_{1}$ such that $\tau
=(T_{1},T_{2},T_{3})$ is a transversal 3-tuple of $\Omega$. By the definition
of $T_{2}$, all diagonal entries of $A$ are zero and $B$ is a zero matrix (the
argument is identical to the one given in the proof of
Proposition~\ref{prop:off_diag_symmetric_repr}). Since $Q-T_{3}$ is a
2-sheltering matroid, we have by Lemma~\ref{lem:symmetric_iff_2sm} that $A$ is
symmetric. By applying Lemma~\ref{lem:unique_3m} to 2-matroid $\mathcal{Z}(Q)-T_{3}$, we see there is at most one tight 3-matroid
$Z^{\prime}$ over $(U,\Omega)$ with $Z^{\prime}-T_{3}=\mathcal{Z}(Q)-T_{3}$. The proof that Statement~\ref{item:isotropic} implies Statement~\ref{item:tight_strict_bin} shows that if we take $D$ to be the zero matrix and $C$ to be $A+I$, then this matrix represents a $3$-sheltering matroid $Q'$ with $\mathcal{Z}(Q')$ tight. Moreover, $\mathcal{Z}(Q')-T_{3} = \mathcal{Z}(Q)-T_{3}$. Therefore, $Z = Z^{\prime} = \mathcal{Z}(Q')$, and we notice that $Q'$ is of the form of Statement~\ref{item:isotropic} (the zero rows of the matrix do not influence the matroid $M$). Hence Statement~\ref{item:isotropic} holds.
\end{proof}

While Theorem~\ref{thm:char_bt3sm} shows that every binary tight $3$-matroid $Z$ is equal to $\mathcal{Z}(Q)$ with $Q$ the strictly binary tight $3$-sheltering matroid of the form given by Statement 3, this does not exclude the possible existence of some other
strictly binary tight $3$-sheltering matroid $Q^{\prime}=(M^{\prime},\Omega)$
with $\mathcal{Z}(Q)=\mathcal{Z}(Q^{\prime})$. Indeed, for the distinct strictly binary tight $3$-sheltering matroids $Q_1 = (M_1,\Omega)$ and $Q_2 = (M_2,\Omega)$, where $\Omega = \{ \{a_1,a_2,a_3\}, \{b_1,b_2,b_3\} \}$ and the matroids $M_1$ and $M_2$ are represented by
$$
\bordermatrix{
& a_1 & b_1 & a_2 & b_2 & a_3 & b_3 \cr
& 1   & 0   & 0   & 0   & 1   & 0 \cr
& 0   & 1   & 0   & 0   & 0   & 1
}
\mbox{ and }
\bordermatrix{
& a_1 & b_1 & a_2 & b_2 & a_3 & b_3 \cr
& 1   & 0   & 0   & 0   & 1   & 0 \cr
& 0   & 1   & 0   & 0   & 1   & 1
},
$$ respectively, we have $\mathcal{Z}(Q_1)=\mathcal{Z}(Q_2)$. The next result shows that this
cannot happen if each $\omega\in\Omega$ is an element of the cycle space of $M^{\prime}$. This
result characterizes isotropic matroids.

\begin{theorem}
\label{thm:3shelt_isotrm} Let $Q=(M,\Omega)$ be a $3$-sheltering matroid. The
following statements are equivalent.

\begin{enumerate}
\item $Q$ is strictly binary and each $\omega\in\Omega$ is an element of the cycle space of $M$.

\item $M$ is isomorphic to some isotropic matroid where $\Omega$ is the set of
vertex triples.
\end{enumerate}
\end{theorem}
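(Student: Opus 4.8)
The plan is to prove the two implications separately, disposing of (2)$\Rightarrow$(1) quickly and then focusing on (1)$\Rightarrow$(2), whose crux is the symmetry of an off-diagonal block. For (2)$\Rightarrow$(1) note that both properties in the first statement are invariant under isomorphism of sheltering matroids, so I may assume outright that $M=M[IAS(G)]$ with $\Omega$ the partition into vertex triples. The matrix $IAS(G)=[\,I\;\;A(G)\;\;A(G)+I\,]$ has an identity block, so $r(M)=|V(G)|=|\Omega|$, and it is a $GF(2)$-matrix; hence $Q$ is binary with $r(M)=|\Omega|$, i.e.\ strictly binary. Moreover the three columns of each vertex triple sum to $0$, as observed in the introduction, and this is precisely the statement that each $\omega\in\Omega$ lies in the cycle space of $M$.

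For (1)$\Rightarrow$(2), assume $M$ is binary with $r(M)=|\Omega|$ and every $\omega$ lies in the cycle space. Since $Z=\mathcal{Z}(Q)$ is a nondegenerate $3$-matroid its bases have cardinality $|\Omega|=r(M)$, so any basis $T_{1}$ of $Z$ is a transversal of $\Omega$ that is also a basis of $M$; fix such a $T_{1}$ and take the standard representation $[\,I\;\;B\,]$ of $M$ with respect to $T_{1}$. For a triple $\omega=\{a,p,q\}$ with $a\in T_{1}$, the cycle-space hypothesis says the columns of $a,p,q$ sum to $0$; as the column of $a$ is the standard basis vector $e_{a}$, the columns of $p$ and $q$ must sum to $e_{a}$. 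In particular their entries in row $a$ sum to $1$, so exactly one of $p,q$ has a zero in row $a$.

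The key step is to split the non-$T_{1}$ elements into transversals $T_{2}$ and $T_{3}$ by placing into $T_{2}$, from each triple, the unique element with a zero entry in the corresponding diagonal row. Writing the resulting representation as $[\,I\;\;A\;\;A'\,]$, with $A,A'$ the $T_{2}$- and $T_{3}$-blocks and rows indexed by $T_{1}$, the relation above gives $A+A'=I$, so $A'=A+I$, while the choice of split makes $A$ zero-diagonal. Now $Q-T_{3}$ shelters the $2$-matroid $\mathcal{Z}(Q)-T_{3}$ and is represented by $[\,I\;\;A\,]$; since $A$ is zero-diagonal, Lemma~\ref{lem:symmetric_iff_2sm} shows $A$ is symmetric. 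Thus $A$ is the adjacency matrix of a loopless simple graph $G$, the representation $[\,I\;\;A\;\;A+I\,]$ is exactly $IAS(G)$, and sending each triple to $\tau_{G}(v)$ gives the desired isomorphism carrying $\Omega$ to the vertex triples.

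I expect the symmetry of $A$ to be the main obstacle. The subtlety is that for an arbitrary splitting of the non-$T_{1}$ elements the block $A$ need not be zero-diagonal, so that Lemma~\ref{lem:symmetric_iff_2sm} does not apply directly; the cycle-space hypothesis is exactly what guarantees that in each triple one of the two non-$T_{1}$ columns is zero on the diagonal, allowing a split that normalizes $A$ to be zero-diagonal and thereby converts the $2$-sheltering structure into symmetry. This argument incidentally realizes $M$ as the isotropic matroid of a loopless graph, with looped representatives corresponding to other admissible splits. Finally, tightness of $\mathcal{Z}(Q)$ is not assumed; it follows a posteriori, since isotropic matroids shelter tight $3$-matroids.
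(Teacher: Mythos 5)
Your proposal is correct and follows essentially the same route as the paper: pick a transversal basis $T_1$ (which exists by strictness and nondegeneracy), use the cycle-space hypothesis to force the two non-basis columns of each triple to sum to the corresponding standard basis vector so that the $T_2/T_3$ split can be normalized to make $A$ zero-diagonal with third block $A+I$, and then invoke Lemma~\ref{lem:symmetric_iff_2sm} on the $2$-sheltering matroid $Q-T_3$ to get symmetry of $A$. The only difference is cosmetic: you spell out explicitly why the zero-diagonal normalization is possible, where the paper simply says ``by swapping elements from $T_2$ and $T_3$.''
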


\begin{proof}
Assume the second statement holds. Recall that for isotropic matroids each
vertex triple is an element of the cycle space. Also, if $M$ is isomorphic to some isotropic
matroid, then $Q$ is obviously strictly binary.

Conversely, assume the first statement holds. Since $Q$ is strictly binary and
$\mathcal{Z}(Q)$ is nondegenerate, $M$ is of rank $|\Omega|$ and contains a
basis $T_{1}$ that is a subtransversal. Let
\[
\bordermatrix{
& T_1 & T_2 & T_3 \cr
& I & A & B
}
\]
be a standard representation of $M$ with respect to $T_{1}$ such that $\tau=
(T_{1},T_{2},T_{3})$ is a transversal 3-tuple of $\Omega$. Since each
$\omega\in\Omega$ is an element of the cycle space of $M$, the columns belonging to each $\omega
\in\Omega$ sum to $0$ and so we have $B = A+I$. By swapping elements from
$T_{2}$ and $T_{3}$, we may assume, without loss of generality, that each
diagonal entry of $A$ is zero. By Lemma~\ref{lem:symmetric_iff_2sm}, $A$ is
symmetric since $Q-T_{3}$ is a 2-sheltering matroid. Hence $M$ is isomorphic
to some isotropic matroid.
\end{proof}

In other words, if $Q=(M,\Omega)$ is a $3$-sheltering matroid where $M$ is
binary and of rank $|\Omega|$, and each $\omega\in\Omega$ is an element of the cycle space of $M$,
then $M$ is isomorphic to some isotropic matroid (where $\Omega$ is the set of
vertex triples).

Note that if $Q$ is isomorphic to some isotropic matroid, then $Q$ is tight.
Hence, by Theorem~\ref{thm:3shelt_isotrm}, if $Q$ is strictly binary and each
$\omega\in\Omega$ is an element of the cycle space of $M$, then $Q$ is tight.

\subsection{Strongly representable 2-matroids}

\label{ssec:strongly_binary} In this subsection we consider a version of
representability for 2-matroids that is stronger than representability. This
stronger version corresponds to the definition of representability of
delta-matroids from Bouchet \cite{bouchet1987}. However, this definition does
not seem to extend naturally to multimatroids other than 2-matroids.

We say that a $2$-sheltering matroid $Q$ is \emph{strongly representable} over
some field $\mathbb{F}$ if $Q=\mathcal{Q}(A,\tau,2)$ for some skew-symmetric
matrix $A$ over $\mathbb{F}$ and some transversal $2$-tuple $\tau$. We say
that a 2-matroid $Z$ is \emph{strongly representable} over $\mathbb{F}$ if
there is a $2$-sheltering matroid $Q$ strongly representable over $\mathbb{F}$
such that $\mathcal{Z}({Q})=Z$. We say that $Q$ ($Z$, respectively) is \emph{strongly
binary} if $Q$ ($Z$, respectively) is strongly representable over $GF(2)$. If
$Q=(M,\Omega)$ is strongly representable over $\mathbb{F}$, then $Q$ is
certainly strictly representable over $\mathbb{F}$. By
Proposition~\ref{prop:off_diag_symmetric_repr}, the converse holds in case $Q$
is strictly binary and tight. Consequently, a tight 2-sheltering matroid is
strictly binary if and only if it is strongly binary.

We should mention that Proposition~\ref{prop:off_diag_symmetric_repr} is
closely related to Property~5.2 of Bouchet and Duchamp \cite{Bouchet_1991_67}:
if an even delta-matroid is weakly binary, then it is binary. The relationship
between the results arises from two facts: if a 2-matroid is strictly binary
by our definition, then the associated delta-matroid is weakly binary by their
definition; and a binary delta-matroid is even if and only if the associated
2-matroid is tight. Like the property of Bouchet and Duchamp,
Proposition~\ref{prop:off_diag_symmetric_repr} does not hold for strictly
binary 2-sheltering matroids in general. In fact, their example $S_{2}$ gives
us the following example of a strictly binary 2-sheltering matroid that
necessarily requires that $A$ be asymmetric.

\begin{example}
Let
\[
E=\bordermatrix{
& a_1 & b_1 & c_1 & a_2 & b_2 & c_2 \cr
a & 1 & 0 & 0 & 1 & 0 & 1 \cr
b & 0 & 1 & 0 & 1 & 1 & 0 \cr
c & 0 & 0 & 1 & 0 & 1 & 1
},
\]
and $B=\{a_{1},b_{1},c_{1}\}$. Then one may verify that the binary matroid $M$
represented by $E$, with the partition $\Omega=\{\{a_{1},a_{2}\}$,
$\{b_{1},b_{2}\}$, $\{c_{1},c_{2}\}\}$, forms a strictly binary 2-sheltering
matroid $Q=(M,\Omega)$. However, Bouchet and Duchamp \cite{Bouchet_1991_67}
observe, in the context of delta-matroids, that $Q$ is not strongly binary.
\end{example}

The interested reader can verify the observation of Bouchet and Duchamp that
$Q$ is not strongly binary in three steps, as follows. First, find all the
transversals of $W(G)$ that are bases of $M$; there are seven, including $B$ and (for instance)
$B^{\prime}=\{a_{2},b_{2},c_{1}\}$. Second, for each of the six bases other than $B$, find the fundamental circuits of the remaining elements.
For instance, the fundamental circuits with respect to $B^{\prime}$ are
$C(a_{1},B^{\prime})=\{a_{1},a_{2},b_{2},c_{1}\}$, $C(b_{1},B^{\prime
})=\{b_{1},b_{2},c_{1}\}$ and $C(c_{2},B^{\prime})=\{c_{2},a_{2},b_{2}\}$. The
representation of $M$ corresponding to a basis $B\in\mathcal{T}(G)$ is a $GF(2)$-matrix
of the form $%
\begin{pmatrix}
I & A
\end{pmatrix}
$, where the columns of $A$ are the incidence vectors of the fundamental
circuits. The third step is to verify that none of these $A$ matrices is
symmetric. For instance, the $A$ matrix corresponding to $B^{\prime}$ is not
symmetric because $b_{2}\in C(a_{1},B^{\prime})$ and $a_{2}\notin
C(b_{1},B^{\prime})$.

We now show that every strongly binary 2-matroid can be sheltered by exactly
one strongly binary sheltering matroid.

\begin{proposition}
\label{lem:unique_bin_2sh_dm} For every strongly binary $2$-matroid $Z$, there
is a unique strongly binary $2$-sheltering matroid $Q$ such that
$\mathcal{Z}({Q})=Z$.
\end{proposition}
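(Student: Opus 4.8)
The existence of $Q$ is immediate from the definition of strong binarity: if $Z$ is strongly binary then, by definition, there is at least one strongly binary $2$-sheltering matroid $Q$ with $\mathcal{Z}(Q)=Z$. The content of the proposition is therefore uniqueness, and the plan is to show that a strongly binary $2$-sheltering matroid $Q=(M,\Omega)$ is completely determined by the family of bases of $Z=\mathcal{Z}(Q)$.

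First I would reduce to a convenient common representation. Fix a basis $B\in\mathcal{T}(\Omega)$ of $Z$; since a strongly representable sheltering matroid is strictly representable, $r(M)=|\Omega|$ and $B$ is a basis of $M$. Starting from any strongly binary representation $\mathcal{Q}(A,\tau,2)$ of $Q$, with $\tau=(T_1,T_2)$, I would apply the principal pivot transform to pivot to $B$. By the first useful property of the principal pivot transform, $M$ is then represented by $(I_B\mid A\ast(B\cap T_2))$ with columns indexed by $T_2\,\Delta\,B\,\Delta\,T_1=U\,\Delta\,B=\bar B$, the transversal complementary to $B$; by the second useful property, the matrix $A':=A\ast(B\cap T_2)$ is again skew-symmetric, hence symmetric over $GF(2)$. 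Thus, after pivoting to $B$, every strongly binary representation of $Q$ yields a symmetric $\Omega\times\Omega$ matrix $A'$ whose row and column for a skew class $\omega_j=\{x_j,y_j\}$ ($x_j\in B$, $y_j\in\bar B$) are indexed by $x_j$ and $y_j$ respectively. It therefore suffices to show that the entries of $A'$ are forced by the bases of $Z$.

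Next I would read the entries of $A'$ off from which transversals near $B$ are bases of $Z$ (equivalently of $M$). For one skew class, $B\,\Delta\,\omega_j$ is independent in $M$ exactly when the column $y_j=\sum_m A'_{mj}e_m$ has a nonzero $e_j$-component, i.e. $A'_{jj}=1$; so the diagonal of $A'$ is recovered from which sets $B\,\Delta\,\omega_j$ are bases of $Z$. For two skew classes, $B\,\Delta\,(\omega_j\cup\omega_k)$ is a basis iff, after quotienting by $\operatorname{span}\{e_m:m\neq j,k\}$, the images of $y_j,y_k$ are independent, i.e. the principal submatrix $A'[\{j,k\}]$ is nonsingular. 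Over $GF(2)$ its determinant is $A'_{jj}A'_{kk}+A'_{jk}A'_{kj}$, and symmetry gives $A'_{jk}A'_{kj}=A'^2_{jk}=A'_{jk}$; since the diagonal is already known, this single equation determines $A'_{jk}$ from the (known) fact of whether $B\,\Delta\,(\omega_j\cup\omega_k)$ is a basis of $Z$. Hence $A'$, and therefore $M$, is determined by $Z$. Applying this to two strongly binary sheltering matroids $Q_1,Q_2$ with $\mathcal{Z}(Q_1)=\mathcal{Z}(Q_2)=Z$, after pivoting both to the same $B$, gives $A'_1=A'_2$, so $Q_1=Q_2$.

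The step I expect to be the crux is the off-diagonal computation, because this is precisely where strong binarity is used: without symmetry of $A'$, the basis family of $Z$ would pin down only the product $A'_{jk}A'_{kj}$ and not the individual entries, and indeed the example preceding this proposition exhibits a strictly (non-strongly) binary $2$-matroid whose sheltering matroid is not unique. Symmetry collapses this product to a single entry and removes the ambiguity. A secondary point to verify carefully is that pivoting to the common basis $B$ really does return the column-index set to the complementary transversal $\bar B$ and preserves skew-symmetry, so that $A'_1$ and $A'_2$ are supported on identical index sets and can be compared entrywise.
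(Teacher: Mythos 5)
Your proposal is correct and follows essentially the same route as the paper: pivot both strongly binary representations to a common transversal basis via the principal pivot transform (which preserves skew-symmetry), then recover the diagonal entries of the symmetric matrix from which single skew-pair exchanges of the basis remain bases, and the off-diagonal entries from which double exchanges remain bases, with symmetry over $GF(2)$ collapsing $A'_{jk}A'_{kj}$ to $A'_{jk}$. The paper phrases this contrapositively (a disagreement in an entry would produce a transversal that is a basis of one sheltering matroid but not the other), but the computation and the role of symmetry are identical to yours.
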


\begin{proof}
Let $\mathcal{Z}(\mathcal{Q}(A,\tau,2))=\mathcal{Z}(\mathcal{Q}(A^{\prime
},\tau^{\prime},2))$ for some symmetric matrices $A$ and $A^{\prime}$ over
$GF(2)$. By applying principal pivot transform, we have $\mathcal{Q}%
(A^{\prime},\tau^{\prime},2)=\mathcal{Q}(A^{\prime\prime},\tau,2)$ for some
symmetric matrix $A^{\prime\prime}$.

Let $\tau=(T_{1},T_{2})$. Assume $A\neq A^{\prime\prime}$. If there is an
$x\in T_{2}$ such that $A[\{x\}]\neq A^{\prime\prime}[\{x\}]$, then
$T_{1}\Delta p$ with $p$ the skew pair containing $x$ is a basis of exactly
one of $\mathcal{Q}(A,\tau,2)$ and $\mathcal{Q}(A^{\prime\prime},\tau,2)$ ---
a contradiction since $\mathcal{Z}(\mathcal{Q}(A,\tau,2))=\mathcal{Z}%
(\mathcal{Q}(A^{\prime\prime},\tau^{\prime},2))$. Consequently, $A$ and
$A^{\prime\prime}$ coincide on the diagonal entries, and so $A$ and
$A^{\prime\prime}$ must differ on some off-diagonal entry. Thus there are
$x,y\in T_{2}$ such that
\[
A[\{x,y\}]=\bordermatrix{
& x & y \cr
x & a & 1 \cr
y & 1 & b
}\text{ and }A^{\prime}[\{x,y\}]=\bordermatrix{
& x & y \cr
x & a & 0 \cr
y & 0 & b
}
\]
for some $a,b\in\{0,1\}$ (or the roles of $A$ and $A^{\prime}$ are reversed).
Now, $A[\{x,y\}]$ is singular if and only if $A^{\prime}[\{x,y\}]$ is not
singular. Hence $T_{1}\Delta(p\cup q)$, with $p$ and $q$ the skew pairs
containing $x$ and $y$, is a basis of exactly one of $\mathcal{Q}(A,\tau,2)$
and $\mathcal{Q}(A^{\prime\prime},\tau,2)$ --- a contradiction. Therefore,
$A=A^{\prime\prime}$.
\end{proof}


\section{Isomorphisms of isotropic matroids} \label{sec:isom_isotropic_mat}
In this section we discuss the connection between local equivalence and isomorphisms of the isotropic matroids $M[IAS(G)]$, and the connection between pivot equivalence and isomorphisms of the restricted isotropic matroids $M[IA(G)]$. In the third subsection we mention the surprising observation that every looped simple graph $G$ has an associated bipartite simple graph $B(G)$, such that $G$ and $H$ are locally equivalent if and only if $B(G)$ and $B(H)$ are pivot equivalent.

\subsection{Compatible and non-compatible isomorphisms}
We begin by summarizing the way local complementations induce isomorphisms of iso\-tropic matroids. A full account is given in~\cite{Tnewnew}.

\begin{proposition}
\label{iso3}(\cite{Tnewnew}) If $G$ is a looped simple graph with a vertex $v$
then there are induced isomorphisms $\beta_{\ell}^{v}:M[IAS(G)]\rightarrow
M[IAS(G_{\ell}^{v})]$, $\beta_{ns}^{v}:M[IAS(G)]\rightarrow M[IAS(G_{ns}%
^{v})]$ and $\beta_{s}^{v}:M[IAS(G)]\rightarrow M[IAS(G_{s}^{v})] $. These
isomorphisms have $\beta_{\ast}^{v}(\alpha_{G}(x))=\alpha_{G_{\ast}^{v}}(x)$
for all $\alpha\in\{\phi,\chi,\psi\} $ and $x\in V(G)$, except as follows:

\begin{enumerate}
\item $\beta_{\ell}^{v}(\chi_{G}(v))=\psi_{G_{\ell}^{v}}(v)$ and $\beta_{\ell
}^{v}(\psi_{G}(v))=\chi_{G_{\ell}^{v}}(v)$.

\item If $v$ is not looped then $\beta_{ns}^{v}(\phi_{G}(v))=\psi_{G_{ns}^{v}%
}(v)$ and $\beta_{ns}^{v}(\psi_{G}(v))=\phi_{G_{ns}^{v}}(v)$.

\item If $v$ is looped then $\beta_{ns}^{v}(\phi_{G}(v))=\chi_{G_{ns}^{v}}(v)
$ and $\beta_{ns}^{v}(\chi_{G}(v))=\phi_{G_{ns}^{v}}(v)$.

\item If $v$ is not looped then $\beta_{s}^{v}(\phi_{G}(v))=\psi_{G_{s}^{v}%
}(v)$ and $\beta_{s}^{v}(\psi_{G}(v))=\phi_{G_{s}^{v}}(v)$; also if $w\in
N(v)$ then $\beta_{s}^{v}(\chi_{G}(w))=\psi_{G_{s}^{v}}(w)$ and $\beta_{s}%
^{v}(\psi_{G}(w))=\chi_{G_{s}^{v}}(w)$.

\item If $v$ is looped then $\beta_{s}^{v}(\phi_{G}(v))=\chi_{G_{s}^{v}}(v)$
and $\beta_{s}^{v}(\chi_{G}(v))=\phi_{G_{s}^{v}}(v)$; also if $w\in N(v)$ then
$\beta_{s}^{v}(\chi_{G}(w))=\psi_{G_{s}^{v}}(w)$ and $\beta_{s}^{v}(\psi
_{G}(w))=\chi_{G_{s}^{v}}(w)$.
\end{enumerate}
\end{proposition}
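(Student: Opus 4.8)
The plan is to realize each of the three maps as an invertible $GF(2)$-linear change of coordinates on the row space $GF(2)^{V(G)}$, combined with the prescribed relabelling of columns inside vertex triples. The governing principle is this: if one matrix has columns indexed by $W(G)$ and another by $W(G_\ast^v)$, and there is an invertible $P$ such that applying $P$ to the column labelled $c$ of the first matrix yields the column labelled $\beta_\ast^v(c)$ of the second, then $\beta_\ast^v$ carries linear dependences to linear dependences and is a matroid isomorphism $M[IAS(G)]\to M[IAS(G_\ast^v)]$. So for each operation I would define $\beta_\ast^v$ by the displayed formulas and then produce the matrix $P$ that realizes it column by column. First I would record the effect on $A=A(G)$: writing $N=N_G(v)$ and $\mathbf{1}_N$ for its indicator column, loop complementation changes only the $(v,v)$ entry, while $A(G_{ns}^v)=A+\mathbf{1}_N\mathbf{1}_N^{T}$ and $A(G_s^v)=A+\mathbf{1}_N\mathbf{1}_N^{T}+\mathrm{diag}(\mathbf{1}_N)$, the extra diagonal term cancelling exactly the loop changes on $N$. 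Throughout, $\phi_G(x)=e_x$, $\chi_G(x)=\mathrm{col}_x(A)$ and $\psi_G(x)=e_x+\mathrm{col}_x(A)$.

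For loop complementation I would take $P=I$. Since only the $(v,v)$ entry of $A$ changes, every column of $IAS(G)$ is literally equal to the correspondingly labelled column of $IAS(G_\ell^v)$, with the single exception that the vectors $\chi_{G_\ell^v}(v)$ and $\psi_{G_\ell^v}(v)$ are $\psi_G(v)$ and $\chi_G(v)$; this is precisely the swap in item~1, and no further check is needed.

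For the non-simple local complement I would use the single transvection $P=I+\mathbf{1}_N\,e_v^{T}$; because $v\notin N$ we have $e_v^{T}\mathbf{1}_N=0$, so $P^2=I$ and $P$ is invertible. The heart of the argument is to verify, for this one $P$, that $P\phi_G(x)$, $P\chi_G(x)$ and $P\psi_G(x)$ are the columns prescribed by $\beta_{ns}^v$, organised into the three position cases $x=v$, $x\in N$, and $x\notin N\cup\{v\}$; every case reduces to the single scalar $e_v^{T}\chi_G(x)=A_{vx}$ together with the value of $\mathbf{1}_N$ at $x$. The point handling both loop subcases at once is that $\mathbf{1}_N=\chi_G(v)+A_{vv}\,\phi_G(v)$, so $Pe_v=\phi_G(v)+\mathbf{1}_N$ equals $\psi_G(v)$ when $v$ is unlooped and $\chi_G(v)$ when $v$ is looped; thus the same $P$ produces the $\phi\leftrightarrow\psi$ swap of item~2 and the $\phi\leftrightarrow\chi$ swap of item~3, according to $A_{vv}$. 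This uniform case analysis is the main obstacle and the only place requiring genuine computation.

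Finally, for the simple local complement I would avoid a fresh computation and instead observe that $G_s^v$ is obtained from $G_{ns}^v$ by complementing the loop at every $w\in N$ (this is exactly the $\mathrm{diag}(\mathbf{1}_N)$ term), and that these loop complementations leave $N_{G_{ns}^v}(v)=N$ unchanged since $v\notin N$. Composing $\beta_{ns}^v$ with the loop-complementation isomorphisms $\beta_\ell^w$, $w\in N$, yields an isomorphism $M[IAS(G)]\to M[IAS(G_s^v)]$; tracing images shows that the $\chi\leftrightarrow\psi$ swaps at the neighbours contributed by the $\beta_\ell^w$, together with the swap at $v$ coming from $\beta_{ns}^v$, reproduce items~4 and~5 exactly. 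Since each constituent map is realized by an invertible $P$, so is the composite, which settles all five cases.
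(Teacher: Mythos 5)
Your proof is correct and follows essentially the same route as the paper: both realize the maps by an invertible row operation (your transvection $P=I+\mathbf{1}_N e_v^{T}$ is exactly the paper's ``add the $v$ row to every row of a neighbor of $v$'') together with the prescribed column relabelling, and both obtain the simple local complement case by composing with loop complementations. The only difference is cosmetic: you handle the looped case of $G_{ns}^{v}$ directly with the same $P$ via the identity $\mathbf{1}_N=\chi_G(v)+A_{vv}\phi_G(v)$, whereas the paper treats only the unlooped case explicitly and derives the rest by composition.
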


\begin{proof}
For $G_{\ell}^{v}$ the assertion is obvious, as the only difference
between $IAS(G)$ and $IAS(G_{\ell}^{v})$ is that the $\chi(v)$ and $\psi(v)$ columns are
transposed. 

For $G_{ns}^{v}$ the situation is a little more complicated. If $v$ is not looped, $IAS(G_{ns}^{v})$ can be obtained from $IAS(G)$ by interchanging the $\phi_{G}(v)$ and $\psi_{G}(v)$ columns, and then adding the $v$ row to every other row corresponding to a neighbor of $v$. Elementary row operations do not affect the matroid represented by a matrix, of course, so there is an isomorphism $\beta:M[IAS(G)]\rightarrow M[IAS(G_{ns}^{v})]$ that is given by $\beta(\phi_{G}(v))=\psi_{G_{ns}^{v}}(v)$, $\beta(\psi_{G}(v))=\phi_{G_{ns}^{v}}(v)$, and otherwise $\beta(\alpha_{G}(w))=\alpha_{G_{ns}^{v}}(w)$ $\forall\alpha\in\{\phi,\chi,\psi\}$ $\forall w\in V(G)=V(G_{ns}^{v})$.

The remaining assertions follow, using compositions of loop complementations and non-simple local complementations at unlooped vertices.
\end{proof}

\begin{corollary}
\label{induced}
If two looped simple graphs $G$ and $H$ are locally equivalent (up to isomorphism), then there is an isomorphism $\beta:M[IAS(G)]\to M[IAS(H)]$, which maps vertex triples to vertex triples.
\end{corollary}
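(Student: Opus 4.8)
The plan is to reduce everything to Proposition~\ref{iso3} together with the trivial case of a graph isomorphism, and then compose. By Definition~\ref{localeq}, the hypothesis that $G$ and $H$ are locally equivalent up to isomorphism means that there is a finite sequence of loop complementations and (simple or non-simple) local complementations transforming $G$ into a graph $G'$, followed by a graph isomorphism $\sigma\colon G'\to H$. Each elementary step should induce an isomorphism of the corresponding isotropic matroids that carries vertex triples to vertex triples, and the class of such isomorphisms is closed under composition, so the composite will give the desired $\beta$.

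First I would observe that each of the induced isomorphisms $\beta_{\ell}^{v}$, $\beta_{ns}^{v}$, $\beta_{s}^{v}$ of Proposition~\ref{iso3} maps every vertex triple to a vertex triple. Indeed, the proposition records that $\beta_{\ast}^{v}(\alpha_{G}(x))=\alpha_{G_{\ast}^{v}}(x)$ for all $\alpha\in\{\phi,\chi,\psi\}$ and all $x$, except for a short list of exceptions. Inspecting that list, each exception merely permutes the three labels $\phi,\chi,\psi$ attached to a single vertex (the vertex $v$, or a neighbor $w$ of $v$); it never carries an element out of its vertex triple. Hence $\beta_{\ast}^{v}(\tau_{G}(x))=\tau_{G_{\ast}^{v}}(x)$ for every vertex $x$, so vertex triples are preserved by each of the three basic isomorphisms.

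Next I would handle the graph isomorphism. A graph isomorphism $\sigma\colon G'\to H$ identifies $A(G')$ with $A(H)$ after relabeling vertices by $\sigma$, hence identifies $IAS(G')$ with $IAS(H)$ up to the corresponding relabeling of columns. This yields an isomorphism $M[IAS(G')]\to M[IAS(H)]$ sending $\alpha_{G'}(x)$ to $\alpha_{H}(\sigma(x))$ for all $\alpha\in\{\phi,\chi,\psi\}$, which sends $\tau_{G'}(x)$ to $\tau_{H}(\sigma(x))$ and so maps vertex triples to vertex triples.

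Finally I would compose. Writing the transformation from $G$ to $G'$ as a sequence of elementary operations $G=G_{0}\to G_{1}\to\cdots\to G_{n}=G'$, each step supplies an isomorphism of isotropic matroids preserving vertex triples, by the first step; composing these with the isomorphism induced by $\sigma$ produces $\beta\colon M[IAS(G)]\to M[IAS(H)]$, which maps vertex triples to vertex triples because each factor does. There is essentially no hard part: the only point to verify is the bookkeeping in the first step, namely that every listed exception in Proposition~\ref{iso3} stays within a single vertex triple, and this is immediate from the statement.
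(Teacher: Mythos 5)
Your proof is correct and follows essentially the same route as the paper: decompose the local equivalence into elementary loop/local complementations plus a final graph isomorphism, note that each induced isomorphism from Proposition~\ref{iso3} preserves vertex triples, and compose. The paper's own proof is just a terser statement of exactly this argument.
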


\begin{proof}
Up to isomorphism, $H$ can be obtained from $G$ through a sequence of individual local complementations and loop complementations. A matroid isomorphism of the type mentioned in the statement is the composition of the isomorphisms induced by the individual local complementations and loop complementations.
\end{proof}

We say that an isomorphism of this type is \emph{compatible} with the partitions of $W(G)$ and $W(H)$ into vertex triples, or that it is \emph{induced} by a sequence of loop and local complementations used to obtain an isomorph of $H$ from $G$. Notice that there is an associated bijection between $V(G)$ and $V(H)$ such that for each $v\in V(G)$, $\beta$ maps the vertex triple $\tau_{G}(v)$ to the vertex triple $\tau_{H}(\beta(v))$. In the special cases mentioned in Proposition \ref{iso3}, this vertex bijection does not appear explicitly because it is the identity map of $V(G)$. In general, we use $\beta$ to denote both a compatible isomorphism of isotropic matroids and the associated vertex bijection; there is little danger of confusing the two, because of the difference between their domains.

It turns out that the converse of Corollary~\ref{induced} is also valid; details are discussed in~\cite{Tnewnew}. The discussion of~\cite{Tnewnew} also yields a characterization of local equivalence without allowing for graph isomorphisms: 

\begin{proposition}
$G$ and $H$ are locally equivalent if and only if there is a compatible isomorphism $\beta:M[IAS(G)]\to M[IAS(H)]$ whose associated bijection $V(G) \to V(H)$ is the identity map of $V(G)=V(H)$.
\end{proposition}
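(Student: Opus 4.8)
The plan is to prove the two implications separately, using Proposition~\ref{iso3} for the forward direction and the converse of Corollary~\ref{induced} (as established in~\cite{Tnewnew}) for the reverse direction, in both cases carefully tracking the associated vertex bijections.

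For the forward direction, suppose $G$ and $H$ are locally equivalent. By Definition~\ref{localeq} this means $H$ is obtained from $G$ by a sequence of loop complementations and local complementations; in particular no vertices are relabeled, so $V(G)=V(H)$. I would observe that each individual operation in the sequence induces one of the isomorphisms $\beta_{\ell}^{v}$, $\beta_{ns}^{v}$, $\beta_{s}^{v}$ of Proposition~\ref{iso3}, and that each of these has identity associated vertex bijection: every exception listed in Proposition~\ref{iso3} merely permutes $\phi,\chi,\psi$ within the triple of a fixed vertex, leaving the vertex itself unchanged. Composing these isomorphisms along the sequence produces a compatible isomorphism $\beta:M[IAS(G)]\to M[IAS(H)]$ whose associated vertex bijection is a composition of identity maps, hence the identity of $V(G)=V(H)$.

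For the reverse direction, suppose $\beta:M[IAS(G)]\to M[IAS(H)]$ is a compatible isomorphism with identity associated vertex bijection (so again $V(G)=V(H)=V$). The key step is to invoke the explicit form of the converse of Corollary~\ref{induced} from~\cite{Tnewnew}: the compatible isomorphism $\beta$ is induced by a sequence of loop and local complementations transforming $G$ into some graph $\tilde{H}$, followed by a graph isomorphism $\rho:\tilde{H}\to H$. By the analysis of the forward direction, the complementation part of this decomposition contributes only the identity to the associated vertex bijection, while the relabeling $\rho$ contributes the permutation $\rho$ itself; hence the associated vertex bijection of $\beta$ equals $\rho$. Since this bijection is the identity by hypothesis, $\rho=\mathrm{id}_{V}$, so $\tilde{H}=H$. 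That is, the complementation sequence already transforms $G$ into $H$, whence $G$ and $H$ are locally equivalent.

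I expect the main obstacle to be the reverse direction, and specifically the need to extract from~\cite{Tnewnew} not merely that a compatible isomorphism witnesses local equivalence \emph{up to isomorphism}, but the sharper statement that the associated vertex bijection of $\beta$ coincides with the graph isomorphism $\rho$ appearing in the witnessing decomposition. Once this bookkeeping of vertex bijections is in place---using that complementation-induced isomorphisms fix vertices by Proposition~\ref{iso3} while relabelings realize $\rho$---the conclusion that $\rho$ is trivial is immediate. Should only the unrefined converse be available, an alternative is to apply it to obtain a witness $K$ with $G$ locally equivalent to $K$ and a graph isomorphism $g:K\to H$, and then exploit that local equivalence is preserved under relabeling (together with the automorphism $\beta^{-1}\circ\delta\circ\gamma$ of $M[IAS(G)]$, where $\gamma$ and $\delta$ are the isomorphisms induced by $G\sim K$ and by $g$) to deduce $G\sim\rho(G)\sim H$; but this route ultimately appeals to the same vertex-bijection tracking, so I would present the direct decomposition argument as the main line.
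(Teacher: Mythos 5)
The paper itself gives no proof of this proposition: it is stated as a consequence of ``the discussion of~\cite{Tnewnew}'' immediately after the remark that the converse of Corollary~\ref{induced} holds, so there is no in-paper argument to compare yours against. Your forward direction is correct and is exactly what the surrounding text intends: each of $\beta_{\ell}^{v},\beta_{ns}^{v},\beta_{s}^{v}$ in Proposition~\ref{iso3} only permutes labels inside fixed vertex triples, so a composition along a complementation sequence is compatible with identity vertex bijection.

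The reverse direction, however, has a genuine gap, and it is the one you yourself flag. You assume that the given compatible isomorphism $\beta$ \emph{itself} factors as (isomorphism induced by a complementation sequence $G\to\tilde H$) composed with (the isomorphism induced by a relabeling $\rho:\tilde H\to H$). In this paper ``compatible'' only means \emph{triple-preserving}: that every compatible isomorphism admits such a factorization is not a definition but a nontrivial theorem --- it is essentially the implication $2\Rightarrow 1$ of Theorem~\ref{theory} (equivalently, the content behind Proposition~\ref{isos}), refined so that the vertex bijection of $\beta$ is tracked and shown to equal $\rho$. Citing only ``the converse of Corollary~\ref{induced}'' gives you local equivalence \emph{up to isomorphism}, which is strictly weaker than what you need; and your fallback route (producing $K$ with $G\sim K$ and $g:K\to H$, then forming the compatible automorphism $\beta^{-1}\circ\delta\circ\gamma$ of $M[IAS(G)]$ with vertex bijection $g$) stalls at exactly the same point: possessing a compatible automorphism whose vertex bijection is $g$ does not by itself let you replace $g$ by the identity, and $K$ and $g(K)$ need not be locally equivalent in general. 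To close the gap without re-deriving the refined statement from~\cite{Tnewnew}, you can argue directly inside this paper's toolkit: set $B=\beta^{-1}(\Phi(H))$, a transversal basis of $M[IAS(G)]$, and let $C$ be a disjoint transversal with $\beta(C(v))\in\{\chi_H(v),\psi_H(v)\}$ for each $v$. Corollary~\ref{allphibasis} produces a graph $H'$ locally equivalent to $G$ whose adjacencies are read off from fundamental circuits with respect to $B$; since $\beta$ carries fundamental circuits to fundamental circuits, preserves triples, and has identity vertex bijection, $B(w)$ lies in the fundamental circuit of $C(v)$ if and only if $\phi_H(w)$ lies in the fundamental circuit of $\chi_H(v)$ or $\psi_H(v)$, i.e., if and only if $vw\in E(H)$. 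Hence $H'$ and $H$ have the same adjacencies, and after loop complementations (which are local equivalence operations) one gets $G\sim H$.
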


The most difficult result of \cite{Tnewnew} is this.

\begin{proposition}
\label{isos}(\cite{Tnewnew}) If there is an isomorphism between the matroids $M[IAS(G)]$ and $M[IAS(H)]$, then $G$ and $H$ are locally equivalent (up to graph isomorphism).
\end{proposition}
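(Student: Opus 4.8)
The plan is to concentrate the entire difficulty in the case where the given isomorphism $\theta\colon M[IAS(G)]\to M[IAS(H)]$ fails to respect the vertex-triple partitions, since a \emph{compatible} isomorphism already yields local equivalence by the (stated) converse of Corollary~\ref{induced}. The strategy is therefore to replace $H$ by a locally equivalent graph relative to which $\theta$ becomes compatible, and then to show that nothing is lost in this replacement.

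First I would transport structure across $\theta$. Set $\Omega^{*}=\theta(\Omega_{G})$, the image of the vertex-triple partition of $G$. Because being a $3$-sheltering matroid, strict binariness, and membership of each class in the cycle space are all properties of the pair $(M,\Omega)$ that are preserved by a matroid isomorphism carrying $\Omega_{G}$ to $\Omega^{*}$, the pair $(M[IAS(H)],\Omega^{*})$ again satisfies Statement~1 of Theorem~\ref{thm:3shelt_isotrm}; note that $|\Omega^{*}|=|V(G)|=r(M[IAS(H)])$, so strictness is automatic. Applying Theorem~\ref{thm:3shelt_isotrm} produces a graph $G'$ and a \emph{compatible} isomorphism $\gamma\colon(M[IAS(H)],\Omega^{*})\to(M[IAS(G')],\Omega_{G'})$. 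The composite $\gamma\circ\theta$ carries $\Omega_{G}$ to $\Omega_{G'}$, hence is compatible, so the converse of Corollary~\ref{induced} gives that $G$ and $G'$ are locally equivalent up to isomorphism.

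It then remains to compare $G'$ and $H$. Via $\gamma$ we may regard both as presenting the \emph{same} binary matroid $M[IAS(H)]$, but with two possibly different vertex-triple partitions, namely $\Omega_{H}$ and $\Omega^{*}$, each exhibiting $M[IAS(H)]$ as an isotropic matroid. Thus the reduction converts the original statement --- an abstract isomorphism between two isotropic matroids on possibly different ground sets --- into the sharper problem of \emph{uniqueness of the vertex-triple partition up to local equivalence}: if a single binary matroid carries two strictly binary $3$-sheltering partitions with all classes in the cycle space, then the two graphs they determine are locally equivalent up to isomorphism.

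This last step is the main obstacle, and is where the genuine work of \cite{Tnewnew} lies. The approach I would take is to fix a transversal basis and use the two elementary repertoires of moves that are realizable by local equivalence: changes of transversal basis, governed by the principal pivot transform identity $A\mapsto A*(B'\cap T)$ and realized by sequences of simple local complementations and edge pivots, together with the within-triple relabelings of $\phi,\chi,\psi$ realized by loop and local complementations as recorded in Proposition~\ref{iso3}. One then argues, by induction on the discrepancy between $\Omega_{H}$ and $\Omega^{*}$ and using the skew-pair exchange axiom of the multimatroid to control how the triples of one partition meet the fundamental circuits determined by the other, that $H$ can be transformed by these moves into a graph whose vertex-triple partition coincides (up to relabeling, i.e.\ graph isomorphism) with $\Omega^{*}$. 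The delicate point --- and the reason the result is hard --- is that an arbitrary matroid automorphism may scramble the triples freely, so one must show that the specific partition $\Omega^{*}$ arising from a genuine isotropic-matroid structure is always reachable using only this restricted, graph-theoretically meaningful set of moves, rather than the full automorphism group of the matroid.
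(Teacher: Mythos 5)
Your reduction is sound as far as it goes, but it does not actually advance the proof: since $\theta$ itself carries $\Omega_{G}$ to $\Omega^{*}$, the pair $(M[IAS(H)],\Omega^{*})$ is already compatibly isomorphic to $(M[IAS(G)],\Omega_{G})$ via $\theta^{-1}$, so the appeal to Theorem~\ref{thm:3shelt_isotrm} just produces $G'\cong G$, and the ``sharper problem'' you arrive at --- one binary matroid carrying two vertex-triple partitions, each exhibiting it as an isotropic matroid, must yield locally equivalent graphs --- is a verbatim restatement of Proposition~\ref{isos}. All of the content of the proposition is therefore concentrated in your final paragraph, and that paragraph is a plan rather than a proof. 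You do not say what quantity measures the ``discrepancy'' between $\Omega_{H}$ and $\Omega^{*}$, why a local complementation, pivot, or within-triple relabeling can always be found that strictly decreases it, or why the process terminates in a graph isomorphism. The skew-pair exchange axiom governs independence relative to a single partition; it is not clear how it constrains the interaction of two \emph{different} partitions of the same ground set, which is exactly where the difficulty lives.

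For comparison, the paper does not reprove this result either --- it cites \cite{Tnewnew} --- but it does describe the shape of the real argument: an arbitrary isomorphism is incrementally deformed into a compatible one using two specific kinds of moves, one localized at a pair of vertices and one at a set of four vertices, and the four-vertex deformation does not even preserve a correspondence between the vertices it acts on and the vertices it produces. That last remark is a warning sign for your plan: a vertexwise induction on the mismatch between the two partitions is unlikely to close, because the correct deformation steps scramble vertices rather than fixing them one at a time. To complete your argument you would need to exhibit the deformation moves explicitly and prove a termination measure for them; that is precisely the hard content of \cite{Tnewnew} that your sketch defers.
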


What makes Proposition \ref{isos} difficult is the fact that unlike a compatible isomorphism, an arbitrary matroid isomorphism between $M[IAS(G)]$ and $M[IAS(H)]$ need not be
directly connected with any particular sequence of loop and local complementations that relates $G$ to $H$. Proposition \ref{isos} is proven in \cite{Tnewnew} by showing that an arbitrary matroid isomorphism may be
incrementally \textquotedblleft deformed\textquotedblright\ into a
compatible isomorphism. The process involves two types of incremental
deformations; one type is focused on a pair of vertices and the other type
is focused on a set of four vertices. The second type of deformation does
not yield a precise correspondence between the four vertices to
which the deformation is applied and the four vertices that result from the
deformation.

We do not provide details of this deformation process here, but we take a
moment to discuss an example. Let $C_{5}$ be the graph with vertices denoted
1, 2, 3, 4 and 5, which form a 5-cycle in the given order; that is, 1 is
adjacent to 5 and 2, 2 is adjacent to 1 and 3, 3 is adjacent to 2 and 4, and
4 is adjacent to 3 and 5. Let $H$ be the graph with $V(H)=\{a,b,c,d,e\}$,
such that $a,b,c,d,e$ form a 5-cycle in the given order, and there is also
an edge connecting $c$ and $e$. See Figure~\ref{circh1f3}.

\begin{figure}[ptb]%
\centering
\includegraphics[
trim=2.408843in 8.167255in 2.007794in 1.071946in,
natheight=11.005600in,
natwidth=8.496800in,
height=1.3526in,
width=3.0874in
]%
{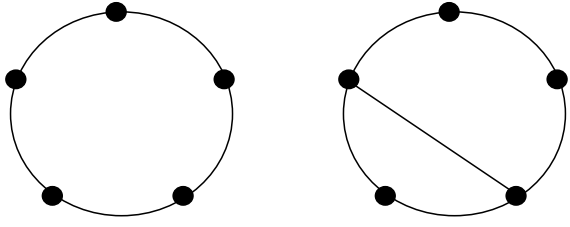}%
\caption{$C_5$ and $H$.}%
\label{circh1f3}%
\end{figure}

Here is the matrix $IAS(C_{5})$, with the columns listed in an unusual order.
\[
\bordermatrix{
& \phi _{1} & \psi _{1} & \chi _{1} & \psi _{2} & \chi _{5} & \psi _{3} & 
\phi _{4} & \chi _{3} & \phi _{2} & \psi _{5} & \psi _{4} & \chi _{2} & \chi
_{4} & \phi _{5} & \phi _{3} \cr
1 & 1 & 1 & 0 & 1 & 1 & 0 & 0 & 0 & 0 & 1 & 0 & 1 & 0 & 0 & 0 \cr 
2 & 0 & 1 & 1 & 1 & 0 & 1 & 0 & 1 & 1 & 0 & 0 & 0 & 0 & 0 & 0 \cr
3 & 0 & 0 & 0 & 1 & 0 & 1 & 0 & 0 & 0 & 0 & 1 & 1 & 1 & 0 & 1 \cr
4 & 0 & 0 & 0 & 0 & 1 & 1 & 1 & 1 & 0 & 1 & 1 & 0 & 0 & 0 & 0 \cr 
5 & 0 & 1 & 1 & 0 & 0 & 0 & 0 & 0 & 0 & 1 & 1 & 0 & 1 & 1 & 0 \cr
}
\]

Here is the matrix $IAS(H)$, with the columns grouped according to the vertex triples.
\[
\bordermatrix{
& \phi _{a} & \chi _{a} & \psi _{a} & \phi _{b} & \chi _{b} & \psi _{b} & 
\phi _{c} & \chi _{c} & \psi _{c} & \phi _{d} & \chi _{d} & \psi _{d} & \phi
_{e} & \chi _{e} & \psi _{e} \cr
a & 1 & 0 & 1 & 0 & 1 & 1 & 0 & 0 & 0 & 0 & 0 & 0 & 0 & 1 & 1 \cr 
b & 0 & 1 & 1 & 1 & 0 & 1 & 0 & 1 & 1 & 0 & 0 & 0 & 0 & 0 & 0 \cr
c & 0 & 0 & 0 & 0 & 1 & 1 & 1 & 0 & 1 & 0 & 1 & 1 & 0 & 1 & 1 \cr
d & 0 & 0 & 0 & 0 & 0 & 0 & 0 & 1 & 1 & 1 & 0 & 1 & 0 & 1 & 1 \cr 
e & 0 & 1 & 1 & 0 & 0 & 0 & 0 & 1 & 1 & 0 & 1 & 1 & 1 & 0 & 1 \cr
}
\]

We assert that these two matrices represent isomorphic binary matroids, with
an isomorphism matching matroid elements according to the given orders of
the columns. This assertion is verified by checking that the alleged
isomorphism matches sets of columns that sum to 0. We do not verify all of these matches but here are three instances: $\{\chi _{4},\phi _{5},\phi _{3}\}
$ and $\{\phi _{e},\chi _{e},\psi _{e}\}$ both sum to 0; $\{\psi _{2},\chi
_{3},\psi _{5},\chi _{4}\}$ and $\{\phi _{b},\chi _{c},\phi _{d},\phi _{e}\}$
both sum to 0; and $\{\psi _{2},\phi _{4},\phi _{2},\psi _{5},\chi _{4}\}$
and $\{\phi _{b},\phi _{c},\psi _{c},\phi _{d},\phi _{e}\}$ both sum to 0.

The given isomorphism $M[IAS(C_{5})]\cong M[IAS(H)]$ matches the vertices 1 and $a$ to each other directly, but there is no such direct matching of the other vertices. Instead, the elements of the vertex triples corresponding to 2, 3, 4 and 5 are
rearranged in a complicated way to produce the vertex triples
corresponding to $b,c,d$ and $e$. Proposition \ref{isos} is satisfied
as $C_{5}$ and $H$ are locally equivalent up to isomorphism --- local complementation of $C_{5}$ with respect to any vertex yields a graph $G$ isomorphic to $H$ --- but the given isomorphism $M[IAS(C_{5})]\cong M[IAS(H)]$ is not directly connected to any isomorphism
between $H$ and a graph locally equivalent to $C_{5}$.

\subsection{Transverse circuits and transverse matroids}

For isotropic matroids, we have the following.

\begin{theorem}
\label{theory}Let $G$ and $H$ be looped simple graphs. Then any one of the
following conditions implies the others:

\begin{enumerate}
\item $G$ and $H$ are locally equivalent, up to isomorphism.

\item There is a compatible isomorphism between the isotropic matroids of $G$ and $H$.

\item There is an isomorphism between the isotropic matroids of $G$ and $H$.

\item There is a bijection between $W(G)$ and $W(H)$, which defines
isomorphisms between the transverse matroids of $G$ and those of $H$.

\item There is a bijection between $W(G)$ and $W(H)$, under which vertex
triples and transverse circuits of $G$ and $H$ correspond.
\end{enumerate}
\end{theorem}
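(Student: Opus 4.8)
The plan is to establish the cycle of implications $1 \Rightarrow 2 \Rightarrow 3 \Rightarrow 4 \Rightarrow 5 \Rightarrow 1$, closing the loop so that all five conditions are equivalent. The first three links are essentially already in hand: Corollary~\ref{induced} gives $1 \Rightarrow 2$ (a local equivalence up to isomorphism induces a compatible matroid isomorphism), while $2 \Rightarrow 3$ is immediate since a compatible isomorphism is in particular an isomorphism, and $3 \Rightarrow 1$ is precisely Proposition~\ref{isos}. So the novel content is in the two remaining arrows $3 \Rightarrow 4$ and $4 \Rightarrow 5$, together with verifying that the combinatorial condition~5 feeds back into the cycle (most naturally by showing $5 \Rightarrow 3$, which reconnects to the matroid isomorphisms).

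For $3 \Rightarrow 4$, I would start from an isomorphism $\beta: M[IAS(G)] \to M[IAS(H)]$ and observe that its restriction is a bijection $W(G) \to W(H)$. I want to argue that $\beta$ carries transversals to transversals. By Proposition~\ref{isos}, condition~3 implies $G$ and $H$ are locally equivalent up to isomorphism, so by Corollary~\ref{induced} there is a \emph{compatible} isomorphism $\beta'$, and compatible isomorphisms map vertex triples to vertex triples, hence transversals to transversals; each transverse matroid $M[IAS(G)]\mid T$ is then mapped isomorphically to the transverse matroid $M[IAS(H)]\mid \beta'(T)$. This gives condition~4 directly from the compatible isomorphism supplied by the first part of the cycle, so I would phrase $3 \Rightarrow 4$ as routing through $3 \Rightarrow 1 \Rightarrow 2 \Rightarrow 4$. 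The implication $4 \Rightarrow 5$ is then a matter of bookkeeping: a bijection inducing isomorphisms of all transverse matroids automatically respects vertex triples (since a triple is characterized as a minimal dependent subtransversal that is maximal among such — or more simply as a block of the partition recoverable from which subtransversals are transversals), and transverse circuits, being circuits of transverse matroids, are preserved by transverse-matroid isomorphisms.

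The main obstacle, and the step deserving the most care, is $5 \Rightarrow 1$ (or equivalently $5 \Rightarrow 3$). Here one is given only a bijection $W(G) \to W(H)$ preserving vertex triples and transverse circuits as \emph{sets}, and must promote this purely combinatorial correspondence to a full matroid isomorphism. The natural strategy is to use the characterization of Theorem~\ref{thm:3shelt_isotrm}: the isotropic matroid is determined by its multimatroid $\mathcal{Z}(M[IAS(G)])$, whose structure is encoded entirely by which subtransversals are independent, equivalently by the transverse circuits. A bijection matching vertex triples and transverse circuits therefore matches the independent subtransversals, hence induces an isomorphism of the corresponding multimatroids; since each isotropic matroid is the \emph{unique} tight binary $3$-sheltering matroid on its multimatroid (Theorem~\ref{thm:char_bt3sm} together with the cycle-space characterization in Theorem~\ref{thm:3shelt_isotrm}, invoking uniqueness via Lemma~\ref{lem:unique_3m}), the multimatroid isomorphism lifts to the required isomorphism $M[IAS(G)] \to M[IAS(H)]$, giving condition~3. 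The delicate point to check is that transverse circuits alone determine \emph{all} circuits of the multimatroid (not merely the subtransversal ones are the full story) — but this is exactly what the definition of a multimatroid in terms of its transversal restrictions guarantees, so the uniqueness machinery of Section~\ref{sec:char_isotr_mm} applies and closes the cycle.
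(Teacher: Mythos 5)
Your handling of the routine implications matches the paper: $1\Rightarrow2$ is Corollary~\ref{induced}, $2\Rightarrow3$ is trivial, $3\Rightarrow1$ is Proposition~\ref{isos}, $2\Rightarrow4$ is immediate from compatibility, and $4\Rightarrow5$ is the bookkeeping you describe (the paper notes only that corresponding transverse matroids force corresponding vertex triples). The substance of the theorem is $5\Rightarrow3$, and there your argument has a genuine gap. You assert that ``each isotropic matroid is the unique tight binary $3$-sheltering matroid on its multimatroid,'' citing Theorem~\ref{thm:char_bt3sm}, Theorem~\ref{thm:3shelt_isotrm} and Lemma~\ref{lem:unique_3m}. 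As literally stated this is false: the example given immediately after Theorem~\ref{thm:char_bt3sm} exhibits two \emph{distinct} strictly binary tight $3$-sheltering matroids $Q_1\neq Q_2$ with $\mathcal{Z}(Q_1)=\mathcal{Z}(Q_2)$. Uniqueness can only hold among sheltering matroids whose skew classes lie in the cycle space, and even in that restricted form none of your three citations proves it: Lemma~\ref{lem:unique_3m} is about uniqueness of a tight \emph{multimatroid} extending a given restriction, not about uniqueness of a matroid sheltering a given multimatroid; Theorem~\ref{thm:char_bt3sm} only produces \emph{some} sheltering matroid of the required form; and Theorem~\ref{thm:3shelt_isotrm} is a characterization, not a uniqueness statement. (The sentence in the introduction that an isotropic matroid is determined by its multimatroid is, in effect, justified by the very implication you are trying to prove, so leaning on it risks circularity.)

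The gap is repairable within the machinery you chose. Fix a transversal basis $T_1$ of the common multimatroid $Z$, let $T_2$ be the transversal of non-exchangeable elements as in the proof of Theorem~\ref{thm:char_bt3sm}, and write each candidate sheltering matroid as $(I\mid A_i\mid A_i+I)$ with $A_i$ symmetric and zero-diagonal (zero-diagonality from tightness, symmetry from Lemma~\ref{lem:symmetric_iff_2sm}, and the third block forced by the cycle-space condition). Then $(I\mid A_1)$ and $(I\mid A_2)$ are strongly binary $2$-sheltering matroids of the same $2$-matroid $Z-T_3$, and Proposition~\ref{lem:unique_bin_2sh_dm} --- the result you actually need, and did not cite --- gives $A_1=A_2$. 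By contrast, the paper's proof of $5\Rightarrow3$ avoids multimatroids altogether: it invokes the fact that a binary matroid is determined by its cycle space, and shows that the cycle space of $M[IAS(G)]$ is generated by the vertex triples together with the transverse circuits (subtract from any cycle $C$ the vertex triples meeting it in more than one element to obtain a subtransversal cycle). That argument is shorter, self-contained, and makes transparent exactly why the data in condition~5 suffices; your route, once patched, buys a structural explanation via the uniqueness of strongly binary shelterings but at the cost of considerably more machinery.
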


\begin{proof}
We begin with the implication $5\Rightarrow3$. First we recall that a binary matroid is uniquely determined by its cycle space (the span of its circuits under symmetric difference) along with its ground set \cite{Ghouila}. Note that every vertex triple is an element of the cycle space of the matroid $M[IAS(G)]$. To verify the implication, it suffices to show that the cycle space of $M[IAS(G)]$ is generated by the vertex triples and the cycle spaces of the transverse matroids. Let $C$ be an element of the cycle space of $M[IAS(G)]$, and let $O$ be the set of vertex triples $\omega$ with $|C\cap\omega|>1$. Then $C^{\prime} =C\mathop{\mathrm{\Delta}}(\Delta_{\omega\in O}\omega)$ is an element of the cycle space, with $|C^{\prime}\cap\omega|\leq1$ for every vertex triple, so $C^{\prime}$ is included in the cycle space of some (in fact, every) transverse matroid which has $C^{\prime}$ as a
subset of its ground set. Hence $C=C^{\prime}\mathop{\mathrm{\Delta}}(\Delta_{\omega\in O}\omega)$ is a sum of transverse circuits and vertex triples.

The implications $3\Rightarrow 2\Rightarrow 1$ are verified in \cite{Tnewnew}, the implication $1\Rightarrow2$ is verified in Corollary~\ref{induced}, and the implication $2\Rightarrow4$ is obvious. The implication $4\Rightarrow5$ is almost obvious; we need only observe that if transverse matroids correspond under a bijection between $W(G)$ and $W(H)$, then vertex triples must correspond too.
\end{proof}

\subsection{Local equivalence from pivot equivalence}

Here is another equivalence relation often mentioned in conjunction with local equivalence.

\begin{definition}
\label{edgepivot}Suppose $vw$ is a nonloop edge of a simple
graph $G$. Then the \emph{edge pivot} of $G$ with respect to $vw$ is
\[
G^{vw}=((G_{s}^{v})_{s}^{w})_{s}^{v}=((G_{s}^{w})_{s}^{v})_{s}^{w}.
\]
If $H$ can be obtained from $G$ using edge pivots then $G$ and $H$ are \emph{pivot equivalent}.
\end{definition}

It is not obvious at first glance that the two triple local complements mentioned in Definition~\ref{edgepivot} are indeed equal, but the reader who has not seen the equality before will have no trouble verifying it. We should mention that ``pivot equivalence'' is often defined in a more complicated way for looped simple graphs: non-simple local complementations at looped vertices are allowed, and edge pivots involving looped vertices are disallowed. The details are not important in this paper, though, because we discuss pivot equivalence only in this subsection and the next, and only for simple graphs.

It is obvious that if $G$ and $H$ are pivot equivalent graphs then they are also locally equivalent. Moreover, the converse is false; for instance, the complete graph $K_{n}$ is not pivot equivalent to any nonisomorphic graph, but it is locally equivalent to a star graph. These observations indicate the well-known fact that pivot equivalence is a strictly finer relation on simple graphs than local equivalence. A surprising consequence of Theorem~\ref{theory} is that despite this fact, local equivalence is indirectly determined by pivot equivalence. The following notion will be useful in explaining this consequence.

\begin{definition}
Let $B$ be a basis of a matroid $M$, with ground set $W$. Then the \emph{fundamental graph} $G_{B}(M)$ of $M$ with respect to $B$ is a bipartite simple graph with vertex classes $B$ and $W-B$, which has an edge connecting $b\in B$ to $w\notin B$ if and only if $b$ is included in the unique circuit $C(w,B)\subseteq B\cup \{w\}$.
\end{definition}

\begin{proposition}
(\cite[Prop. 4.3.2]{O})
\label{funconn}
Let $B$ be a basis of a matroid $M$. Then $M$ is a connected matroid if and only if $G_{B}(M)$ is a connected graph.
\end{proposition}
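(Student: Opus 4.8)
The statement to prove is Proposition~\ref{funconn}: a matroid $M$ is connected if and only if its fundamental graph $G_B(M)$ is connected, for any basis $B$.

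\medskip

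The plan is to characterize matroid connectivity through the fundamental circuits encoded by $G_B(M)$ and relate this to graph connectivity. Recall that $M$ is connected precisely when every pair of elements of the ground set $W$ lies in a common circuit, which is equivalent to saying the relation ``$x$ and $y$ belong to a common circuit'' has a single equivalence class on $W$. The edges of $G_B(M)$ record exactly the incidences $b\in C(w,B)$ between basis elements $b\in B$ and non-basis elements $w\in W-B$, where $C(w,B)$ is the fundamental circuit. Since each such $C(w,B)$ is a genuine circuit of $M$, connectivity of $G_B(M)$ should force enough circuits to chain together all of $W$.

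\medskip

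First I would prove the forward direction by contraposition: suppose $G_B(M)$ is disconnected, with vertex set partitioned into nonempty parts $X \sqcup Y$ having no edges between them. Write $X = (X\cap B)\sqcup(X\cap(W-B))$ and similarly for $Y$. Because there are no edges across the partition, for every $w\in X\cap(W-B)$ the fundamental circuit $C(w,B)$ meets $B$ only inside $X\cap B$, and symmetrically for $Y$. Since the fundamental circuits $\{C(w,B) : w\in W-B\}$ generate the whole cycle space of $M$, every circuit of $M$ is a symmetric difference of these fundamental circuits; hence every circuit lies entirely within $X$ or entirely within $Y$. Thus no circuit meets both $X$ and $Y$, which exhibits $M$ as a direct sum (the separation $(X,Y)$ is a $1$-separation with no circuits crossing it), so $M$ is disconnected.

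\medskip

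For the converse I would argue that connectivity of $G_B(M)$ yields connectivity of $M$, again most cleanly by contraposition. Suppose $M$ is disconnected, so $W$ partitions as $W_1\sqcup W_2$ with $M = (M\mid W_1)\oplus(M\mid W_2)$ and every circuit contained in one part. Then $B_1 = B\cap W_1$ and $B_2 = B\cap W_2$ are bases of the two components, and for any $w\in W_i - B$ the fundamental circuit $C(w,B)$ is contained in $W_i$, so its basis elements lie in $B_i$. This means $G_B(M)$ has no edge joining $W_1$ to $W_2$, so $G_B(M)$ is disconnected. The main obstacle, and the step deserving the most care, is the clean passage between ``no circuit crosses the partition'' and ``$M$ is a direct sum across that partition''; I expect to lean on the standard fact that the fundamental circuits with respect to a basis span the cycle space, together with the characterization of matroid connectivity via circuits, both of which are classical and available from \cite{O}.
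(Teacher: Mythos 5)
The paper offers no proof of Proposition~\ref{funconn}; it simply cites \cite[Prop.\ 4.3.2]{O}, so the only question is whether your argument is correct as a proof of the stated result. Your direction ``$M$ disconnected $\Rightarrow$ $G_{B}(M)$ disconnected'' is fine: each fundamental circuit lies in a single direct summand, so no edge crosses the partition. The other direction, however, has a genuine gap. You deduce that every circuit of $M$ lies inside $X$ or inside $Y$ from the claim that every circuit is a symmetric difference of fundamental circuits. That claim is \emph{not} true for arbitrary matroids --- it is one of the classical characterizations of \emph{binary} matroids. For example, in $U_{2,4}$ with basis $B=\{1,2\}$ the fundamental circuits are $\{1,2,3\}$ and $\{1,2,4\}$; their symmetric difference $\{3,4\}$ is not a circuit, and the circuit $\{1,3,4\}$ is not a symmetric difference of fundamental circuits. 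Since Proposition~\ref{funconn} is stated for an arbitrary matroid $M$ (even though the paper only ever applies it to the binary matroids $M[IAS(G)]$), your proof as written establishes only the binary case. A smaller point in the same direction: even for binary matroids, a symmetric difference of fundamental circuits, some contained in $X$ and some in $Y$, is their union and meets both parts; you need circuit minimality to conclude that an actual circuit cannot arise this way.

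The standard repair avoids the cycle space entirely. If $G_{B}(M)$ is disconnected with vertex partition $X\sqcup Y$ and no crossing edges, then for each $w\in X\setminus B$ the fundamental circuit satisfies $C(w,B)\setminus\{w\}\subseteq B\cap X$, so $w\in \mathrm{cl}(B\cap X)$; hence $r(X)=|B\cap X|$, and likewise $r(Y)=|B\cap Y|$. Therefore $r(X)+r(Y)=|B|=r(M)$, which is exactly the condition for $M=(M\mid X)\oplus(M\mid Y)$, and both parts are nonempty, so $M$ is disconnected. Substituting this rank computation for the cycle-space step makes your argument valid for all matroids.
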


If $M$ is a disconnected matroid then $M$ is a direct sum $\oplus N_i$ of connected matroids, and a fundamental graph $G_{B}(M)$ is a disjoint union of fundamental graphs $G_{B}(N_i)$. In particular, an isolated vertex of $G_{B}(M)$ corresponds to either a loop or a coloop of $M$. 

\begin{proposition}
\label{fundamental}
Let $M_1$ and $M_2$ be connected binary matroids with bases $B_1$ and $B_2$, respectively. Then $G_{B_1}(M_1)$ and $G_{B_2}(M_2)$ are pivot equivalent if and only if $M_{1} = M_{2}$ or $M_{1} = M_{2}^{\ast}$.
\end{proposition}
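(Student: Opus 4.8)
The plan is to handle both directions through a single dictionary between graph edge pivots on fundamental graphs and single-element basis exchanges in the underlying binary matroid. The starting point is the principal pivot transform property recalled just before Lemma~\ref{lem:char_bases_2matroid}: if $[\,I\mid A\,]$ represents $M$ with respect to a basis $B$ (rows indexed by $B$, columns of $A$ by $T=W-B$), and $b\in B$, $w\in T$ satisfy $A[b,w]=1$ (equivalently $bw$ is an edge of $G_B(M)$), then $B'=B\Delta\{b,w\}$ is a basis and the standard representation with respect to $B'$ has biadjacency matrix $A\ast\{w\}$. Writing out this single-entry pivot over $GF(2)$ shows that the bipartite graph with biadjacency matrix $A\ast\{w\}$ is obtained from $G_B(M)$ by interchanging $b$ and $w$, letting each inherit the other's neighborhood, and toggling every adjacency between $N(w)\setminus\{b\}$ and $N(b)\setminus\{w\}$; this is exactly the edge pivot $G_B(M)^{bw}$. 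Thus the first key step is the identity $G_B(M)^{bw}=G_{B\Delta\{b,w\}}(M)$.

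Two further ingredients are needed. Since $M_1$ (and $M_2$) is connected, every fundamental graph $G_B(M_1)$ is connected by Proposition~\ref{funconn}, so its bipartition $\{B,W-B\}$ is determined by the graph as an \emph{unordered} pair. And reading a standard representation $[\,I\mid A\,]$ of $M$ with respect to $B$ instead as a representation of $M^{\ast}$ with respect to the basis $T$ replaces the biadjacency matrix by $A^{T}$, so $G_B(M)=G_{W-B}(M^{\ast})$ as graphs on $W$. Consequently a connected bipartite graph on $W$, together with a choice of which class is the basis, determines a \emph{unique} binary matroid, and the two choices of class yield $M$ and $M^{\ast}$.

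For the ``if'' direction, when $M_1=M_2=:M$ I would use the fact that any two bases of a matroid are joined by a sequence of single-element exchanges, each replacing some $b$ by some $w$ with $\{b,w\}$ an edge of the current fundamental graph; by the identity above each such exchange is an edge pivot, so $G_{B_1}(M)$ and $G_{B_2}(M)$ are pivot equivalent. When $M_1=M_2^{\ast}$, the duality observation gives $G_{B_2}(M_2)=G_{W-B_2}(M_2^{\ast})=G_{W-B_2}(M_1)$, and the previous case applied to the two bases $B_1$ and $W-B_2$ of $M_1$ finishes the argument. For the ``only if'' direction I would follow a pivot sequence $G_{B_1}(M_1)=G_0,\dots,G_k=G_{B_2}(M_2)$; by induction, at each step the unique bipartition of the connected graph $G_i$ determines which endpoint of the pivoted edge lies in the basis, so the identity above gives $G_i=G_{B^{(i)}}(M_1)$ for bases $B^{(i)}$ of $M_1$. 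Then $G_{B_2}(M_2)=G_{B^{(k)}}(M_1)$ as graphs on $W$, and their common connected bipartition forces $\{B^{(k)},W-B^{(k)}\}=\{B_2,W-B_2\}$; the uniqueness-up-to-duality established above yields $M_1=M_2$ when $B^{(k)}=B_2$ and $M_1=M_2^{\ast}$ when $B^{(k)}=W-B_2$.

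The main obstacle is the opening identity $G_B(M)^{bw}=G_{B\Delta\{b,w\}}(M)$, i.e.\ matching the graph-theoretic edge pivot $((G_s^v)_s^w)_s^v$ with the single-element principal pivot transform over $GF(2)$; this is the one place a genuine computation is required. Everything else is bookkeeping with connectivity of fundamental graphs, uniqueness of the bipartition of a connected bipartite graph, and connectivity of the basis-exchange structure of a matroid.
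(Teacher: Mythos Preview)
Your argument is correct and complete. The key identity $G_B(M)^{bw}=G_{B\Delta\{b,w\}}(M)$, the use of Proposition~\ref{funconn} to guarantee a unique bipartition, the duality observation $G_B(M)=G_{W\setminus B}(M^{\ast})$, and the fact that a binary matroid is determined by its fundamental circuits with respect to a basis --- these four pieces assemble exactly as you describe. One minor quibble: the principal pivot transform as recalled in the paper is stated for square (skew-symmetric) matrices, whereas here you are pivoting the generally rectangular biadjacency matrix of a fundamental graph; the computation you sketch is correct, but strictly speaking it is the ordinary Gaussian pivot on $[\,I\mid A\,]$ rather than the PPT of $A$ itself. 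This does not affect your argument.

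You should know, however, that the paper does not actually prove Proposition~\ref{fundamental}. It simply cites the result as implicit in Oxley~\cite{O} and as an explicit consequence of Oum~\cite[Corollary~3.5]{Oum}, noting only that the translation requires the uniqueness of vertex classes in a connected bipartite graph. So you have written out the direct proof that the paper chose to delegate to references; your approach is essentially the standard one underlying those citations, and there is no alternative method in the paper to compare against.
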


Proposition~\ref{fundamental} is certainly implicit in the discussion of matroid representations in Oxley's book~\cite{O}, though it is not explicitly stated there. An explicit statement equivalent to Proposition~\ref{fundamental} is proven by Oum~\cite[Corollary 3.5]{Oum}. (Verifying the equivalence between these statements requires the elementary observation that the vertex classes of a connected bipartite graph are unique.)

\begin{definition}
For a looped simple graph $G$, we denote by $B(G)$ the fundamental graph of $M[IAS(G)]$ with respect to the basis $\Phi(G) = \{\phi_{G}(v)\mid v\in V(G)\}$.
\end{definition}

If $v\in V(G)$, then the neighborhood of $\chi_{G}(v)$ (respectively $\psi_{G}(v)$) in $B(G)$ gives a set of $\phi$ columns of $IAS(G)$ whose sum is equal to the $\chi_{G}(v)$ column (respectively the $\psi_{G}(v)$ column). Hence $B(G)$ is the bipartite graph with adjacency matrix
\[
A(B(G))=%
\begin{pmatrix}
0 & A(G) & A(G)+I\\
A(G) & 0 & 0\\
A(G)+I & 0 & 0
\end{pmatrix}
\text{.}
\]

\begin{corollary}
\label{localpivot}Two looped simple graphs $G$ and $H$ are locally equivalent (up to isomorphism) if and only if $B(G)$ and $B(H)$ are pivot equivalent (up to isomorphism).
\end{corollary}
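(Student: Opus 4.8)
The plan is to prove the two implications separately, reducing each to connected components and combining Theorem~\ref{theory} (local equivalence up to isomorphism is the same as isomorphism of isotropic matroids) with Proposition~\ref{fundamental} (pivot equivalence of fundamental graphs detects connected matroids up to equality or duality).

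For the forward direction, suppose $G$ and $H$ are locally equivalent up to isomorphism. By Theorem~\ref{theory} there is a matroid isomorphism $\beta:M[IAS(G)]\to M[IAS(H)]$; since $\beta$ carries the basis $\Phi(G)$ to a basis $\beta(\Phi(G))$ and carries fundamental circuits to fundamental circuits, it induces a graph isomorphism $B(G)=G_{\Phi(G)}(M[IAS(G)])\cong G_{\beta(\Phi(G))}(M[IAS(H)])$. It then remains to observe that any two fundamental graphs of the same matroid are pivot equivalent: decomposing $M[IAS(H)]$ into connected components and applying the ``$M_1=M_2$'' case of Proposition~\ref{fundamental} componentwise (bases of a direct sum split as unions of bases of the summands, and a fundamental graph of a direct sum is the disjoint union of fundamental graphs of the summands), we conclude that $G_{\beta(\Phi(G))}(M[IAS(H)])$ is pivot equivalent to $G_{\Phi(H)}(M[IAS(H)])=B(H)$. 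Hence $B(G)$ and $B(H)$ are pivot equivalent up to isomorphism.

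For the converse, suppose $B(G)$ and $B(H)$ are pivot equivalent up to isomorphism. Comparing vertex counts gives $|V(G)|=|V(H)|=:n$, so $r(M[IAS(G)])=r(M[IAS(H)])=n$. Edge pivots, being compositions of simple local complementations, preserve the partition of the vertex set into connected components, and fundamental graphs of direct sums are disjoint unions; therefore the hypothesis yields a bijection $\sigma$ between the connected components $N_i$ of $M[IAS(G)]$ and the components $N'_j$ of $M[IAS(H)]$ whose corresponding fundamental graphs are pivot equivalent up to isomorphism. By Proposition~\ref{fundamental}, for each $i$ we have $N_i\cong N'_{\sigma(i)}$ or $N_i\cong (N'_{\sigma(i)})^{\ast}$. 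Once the second, genuinely dual, alternative is excluded, we obtain $M[IAS(G)]=\bigoplus_i N_i\cong\bigoplus_i N'_{\sigma(i)}=M[IAS(H)]$, and Theorem~\ref{theory} finishes the proof.

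The main obstacle, and the crux of the whole argument, is excluding this duality alternative: an isotropic matroid is never self-dual, since its rank $n$ differs from its dual rank $2n$, yet Proposition~\ref{fundamental} pins a matroid down only up to duality. I expect to resolve this via a rank inequality satisfied by every connected component $N$ of an isotropic matroid, namely $|E(N)|\geq 2\,r(N)$, with equality only for the self-dual parallel pair $U_{1,2}$. To prove it I would classify the components: each isolated vertex of $G$ contributes a loop $U_{0,1}$ (with $|E|-2r=1$) and a parallel pair $U_{1,2}$ (with $|E|-2r=0$), while every remaining component is a union of complete vertex triples arising from a single connected component $S$ of $G$ containing an edge; restricting to the rows indexed by $S$ exhibits such a component as represented by $(I_S\ A_S\ A_S+I_S)$, of rank exactly $|S|$, so that $|E|-2r=|S|>0$. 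Writing $D=\{i:N_i\not\cong N'_{\sigma(i)}\}$, the identities $\sum_i r(N_i)=n=\sum_j r(N'_j)$ together with $r((N'_{\sigma(i)})^{\ast})=|E(N'_{\sigma(i)})|-r(N'_{\sigma(i)})$ combine to give $\sum_{i\in D}\bigl(|E(N'_{\sigma(i)})|-2\,r(N'_{\sigma(i)})\bigr)=0$; since each summand is nonnegative and vanishes only when $N'_{\sigma(i)}\cong U_{1,2}$ is self-dual (which forces $N_i\cong N'_{\sigma(i)}$, contradicting $i\in D$), we conclude $D=\emptyset$, completing the converse.
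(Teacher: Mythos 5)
Your proof is correct, and its skeleton --- reduce to connected components, combine Theorem~\ref{theory} with Proposition~\ref{fundamental}, and rule out the dual alternative of Proposition~\ref{fundamental} by counting ranks --- is the same as the paper's. The differences lie in how the reduction and the duality exclusion are organized. The paper first disposes of isolated vertices by induction on $|V(G)|$ (using Propositions~\ref{serieschar} and~\ref{loopchar} to see that isolated vertices of $B(G)$ come exactly from isolated vertices of $G$), and then, for each matched pair of connected pieces, kills the option $M[IAS(G)]\cong M[IAS(H)]^{\ast}$ outright because the two sides have ranks $n$ and $2n$ with $n>0$. You instead classify all components of an isotropic matroid (the loops $U_{0,1}$ and parallel pairs $U_{1,2}$ coming from isolated vertices, plus one connected piece of rank $|S|$ on $3|S|$ elements for each connected component $S$ of $G$ containing an edge), prove the inequality $|E(N)|\geq 2\,r(N)$ with equality only for the self-dual $U_{1,2}$, and then exclude all dual swaps at once via the identity $\sum_{i\in D}\bigl(|E(N'_{\sigma(i)})|-2\,r(N'_{\sigma(i)})\bigr)=0$. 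Your version avoids the induction and deals honestly with the self-dual components rather than stripping them out first; the paper's version is shorter because the per-component mismatch $n_i\neq 2n_i$ needs no auxiliary inequality or global bookkeeping. Two small points: your classification of components implicitly uses the facts, cited in the paper from \cite[Section 7]{Tnewnew}, that $M[IAS(G)]$ is the direct sum of the isotropic matroids of the connected components of $G$ and that the isotropic matroid of a connected graph with at least two vertices is connected, so these should be cited rather than treated as obvious; and the claim that edge pivots preserve the vertex sets of connected components deserves a sentence (it follows because a simple local complementation at $v$ only deletes edges $ab$ with $a,b\in N(v)$, which can be rerouted through $v$).
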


\begin{proof}
As noted above, an isolated vertex of $B(G)$ corresponds to either a loop or a coloop of $M[IAS(G)]$. As observed in Propositions~\ref{serieschar} and~\ref{loopchar}, $M[IAS(G)]$ has no coloop, and a loop in $M[IAS(G)]$ corresponds to an isolated vertex of $G$. It follows that $B(G)$ has an isolated vertex if and only if $G$ has an isolated vertex. Isolated vertices are preserved under local equivalence and pivot equivalence, so if either $G$ and $H$ are locally equivalent (up to isomorphism) or $B(G)$ and $B(H)$ are pivot equivalent (up to isomorphism), then $G$ has an isolated vertex if and only if $H$ has an isolated vertex. Induction on $|V(G)|$ allows us to assume that neither $G$ nor $H$ has an isolated vertex; in particular, each of $G,H$ has at least two vertices.

If $G$ and $H$ are locally equivalent (up to isomorphism), then $|V(G)|=|V(H)|$. Also, if $B(G)$ and $B(H)$ are pivot equivalent (up to isomorphism) then $3 \cdot |V(G)|=|V(B(G))|=|V(B(H))|=3 \cdot |V(H)|$. Consequently we may assume that $|V(G)|=|V(H)|>1$.

Suppose for the moment that $G$ is connected; then $M[IAS(G)]$ is connected~\cite[Section 7]{Tnewnew}. If $G$ and $H$ are locally equivalent (up to isomorphism) then Theorem~\ref{theory} tells us that $M[IAS(G)] \cong M[IAS(H)]$, so Proposition~\ref{fundamental} implies that $B(G)$ and $B(H)$ are pivot equivalent (up to isomorphism). For the converse, suppose $B(G)$ and $B(H)$ are pivot equivalent (up to isomorphism). Then Proposition~\ref{fundamental} tells us that $M[IAS(G)] \cong M[IAS(H)]$ or $M[IAS(G)] \cong M[IAS(H)]^{*}$. The latter is impossible, as the rank of $M[IAS(G)]$ is $|V(G)|=|V(H)|$ and the rank of $M[IAS(H)]^{*}$ is $2 \cdot |V(H)|$. Theorem~\ref{theory} tells us that $M[IAS(G)] \cong M[IAS(H)]$ implies $G$ and $H$ are locally equivalent (up to isomorphism).

If $G$ is not connected, let $G_1,\dots,G_c$ be its connected components. Then $M[IAS(G)]$ is the direct sum of the isotropic matroids of $G_1,\dots,G_c$~\cite[Section 7]{Tnewnew}, so $B(G)$ is the disjoint union of $B(G_1),\dots,B(G_c)$. The assertion of the corollary follows from the arguments above, along with the fact that local equivalence and pivot equivalence do not alter the vertex sets of connected components. 
\end{proof}

Before proceeding we remark on an unfortunate clash of nomenclature. Bouchet calls $G$ a ``fundamental graph'' of the isotropic system associated with $G$. See \cite{Bi2, B3} for instance. So, although isotropic systems and isotropic matroids are equivalent (cf.\ Theorem~\ref{iso1}), their notions of fundamental graphs differ.

\subsection{Pivot equivalence and $M[IA(G)]$}

If $G$ is a graph with adjacency matrix $A(G)$ let $IA(G)=\begin{pmatrix} I & A(G) \end{pmatrix}$, and let $M[IA(G)]$ be the binary matroid represented by $IA(G)$. We call $M[IA(G)]$ the \emph{restricted} isotropic matroid of $G$. The ground set of $M[IA(G)]$ is $\{\phi_{G}(v),\chi_{G}(v)\mid v\in V(G)\}\subseteq W(G)$; there is a natural partition of its elements into pairs corresponding to the vertices of $G$.
\begin{proposition}
$M[IA(G)]$ is a 2-sheltering matroid with respect to the natural partition of its ground set. Also, $M[IA(G)] \cong M[IA(G)]^{\ast}$.
\end{proposition}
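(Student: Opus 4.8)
The statement has two parts. For the first part, I want to show that $M[IA(G)]$ is a 2-sheltering matroid with respect to the partition $\Omega = \{\{\phi_G(v),\chi_G(v)\}\mid v\in V(G)\}$. The plan is to invoke Lemma~\ref{lem:repr_implies_2smatroid} directly. Writing $IA(G) = \begin{pmatrix} I & A(G)\end{pmatrix}$, I set $\tau = (T_1,T_2)$ with $T_1 = \Phi(G) = \{\phi_G(v)\mid v\in V(G)\}$ and $T_2 = \{\chi_G(v)\mid v\in V(G)\}$, so that $\tau$ is a transversal 2-tuple of $(U,\Omega)$. The block $A$ in the hypothesis of Lemma~\ref{lem:repr_implies_2smatroid} is then $A(G)$, the adjacency matrix of $G$. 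Since $A(G)$ is a symmetric $GF(2)$ matrix, it is skew-symmetric over $GF(2)$ (where $-1 = 1$), so the lemma applies and tells us that $(M[IA(G)],\Omega)$ is a 2-sheltering matroid. This part is essentially immediate once the matrix is put into the standard form of the lemma.

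For the second part, I need $M[IA(G)] \cong M[IA(G)]^{\ast}$. The key observation is that $M[IA(G)]$ is represented over $GF(2)$ by $\begin{pmatrix} I & A(G)\end{pmatrix}$, which is a standard representation with respect to the basis $\Phi(G)$. For a binary matroid with standard representation $\begin{pmatrix} I & A\end{pmatrix}$, the dual matroid has standard representation $\begin{pmatrix} A^T & I\end{pmatrix}$ (a standard fact about matroid duality; see Oxley~\cite{O}). Since $A(G)$ is symmetric over $GF(2)$, we have $A(G)^T = A(G)$, so $M[IA(G)]^{\ast}$ is represented by $\begin{pmatrix} A(G) & I\end{pmatrix}$. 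The plan is then to exhibit an explicit isomorphism: the map that sends $\phi_G(v) \mapsto \chi_G(v)$ and $\chi_G(v)\mapsto \phi_G(v)$ for every $v$ simply interchanges the two column blocks, carrying the representation $\begin{pmatrix} I & A(G)\end{pmatrix}$ of $M[IA(G)]$ to the representation $\begin{pmatrix} A(G) & I\end{pmatrix}$ of $M[IA(G)]^{\ast}$. Because both matrices represent matroids on the same ground set with matched columns summing to zero in corresponding ways, this column-swap is a matroid isomorphism $M[IA(G)] \to M[IA(G)]^{\ast}$.

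The main point to verify carefully is that the swap really does induce a matroid isomorphism and not merely a relabeling of columns that happens to preserve the matrix shape. The cleanest way is to note that a subset of columns of $\begin{pmatrix} I & A(G)\end{pmatrix}$ is linearly dependent over $GF(2)$ if and only if the corresponding subset of columns of $\begin{pmatrix} A(G) & I\end{pmatrix}$ is linearly dependent, where ``corresponding'' means under the $\phi \leftrightarrow \chi$ swap; this holds precisely because the two block matrices differ only by the permutation of their two halves together with the symmetry $A(G)^T = A(G)$ that identifies the off-diagonal block with its transpose. I do not expect a genuine obstacle here, since symmetry of the adjacency matrix over $GF(2)$ does all the work; the only thing requiring attention is making the duality computation $\begin{pmatrix} I & A\end{pmatrix} \mapsto \begin{pmatrix} A^T & I\end{pmatrix}$ explicit and observing that $A^T = A$ collapses it to the desired symmetric form.
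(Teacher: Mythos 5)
Your proof is correct. Part two is essentially the paper's argument made explicit: the paper simply cites the symmetry of $A(G)$ together with \cite[Theorem 2.2.8]{O}, which is exactly the computation $\begin{pmatrix} I & A\end{pmatrix}\mapsto\begin{pmatrix} A^{\mathrm{T}} & I\end{pmatrix}$ followed by $A(G)^{\mathrm{T}}=A(G)$ and the $\phi\leftrightarrow\chi$ relabeling that you spell out. For part one your route differs slightly from the paper's: the paper deduces the 2-sheltering property by deleting the transversal $\Psi(G)$ from the 3-sheltering matroid $(M[IAS(G)],W(G))$ (using that deletion of a transversal from a sheltering matroid yields a sheltering matroid, as noted via $\mathcal{Z}(Q-X)=\mathcal{Z}(Q)-X$), whereas you apply Lemma~\ref{lem:repr_implies_2smatroid} directly, observing that a symmetric $GF(2)$ matrix is skew-symmetric in characteristic $2$. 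Both are one-line arguments; yours is self-contained at the level of the representing matrix, while the paper's emphasizes that $M[IA(G)]$ sits inside $M[IAS(G)]$ as the restriction to $\Phi(G)\cup X(G)$, which is the structural point the surrounding section cares about. Either way the statement is fully established.
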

\begin{proof}
The first assertion follows from the fact that $M[IAS(G)]$ is a 3-sheltering matroid with respect to the partition of $W(G)$ into vertex triples. The second assertion follows from the fact that $A(G)$ is symmetric, cf.\ \cite[Theorem 2.2.8]{O}.
\end{proof}

Theorem~\ref{theory} tells us that there is a direct relationship between local equivalence and isotropic matroids: two graphs are locally equivalent (up to isomorphism) if and only if their isotropic matroids are isomorphic. The following proposition indicates that there is an analogous, but more complicated relationship between pivot equivalence and $M[IA]$ matroids.
\begin{proposition} Let $G$ and $H$ be simple graphs.
\label{pivot2}
\begin{enumerate}
\item $G$ and $H$ are pivot equivalent (up to isomorphism) if and only if $M[IA(G)]$ and $M[IA(H)]$ are isomorphic as 2-sheltering matroids.
\item If $G$ and $H$ are pivot equivalent (up to isomorphism) then $M[IA(G)]$ and $M[IA(H)]$ are isomorphic as matroids. However $M[IA(G)]$ and $M[IA(H)]$ may be isomorphic even if $G$ and $H$ are not pivot equivalent (up to isomorphism).
\item If $G$ and $H$ are bipartite, then $G$ and $H$ are pivot equivalent (up to isomorphism) if and only if $M[IA(G)]$ and $M[IA(H)]$ are isomorphic as matroids.
\end{enumerate}
\end{proposition}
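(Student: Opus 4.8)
The plan is to route everything through the identification of edge pivots with principal pivot transforms, and of the latter with isomorphisms of the 2-sheltering structure. The basic fact I would use is that for a nonloop edge $vw$ of a simple graph $G$ one has $A(G^{vw})=A(G)\ast\{v,w\}$, and that, by the first property of the principal pivot transform recalled before Lemma~\ref{lem:char_bases_2matroid}, changing the basis of $M[IA(G)]$ from $\Phi(G)$ to the transversal $(\Phi(G)\setminus\{\phi_G(v),\phi_G(w)\})\cup\{\chi_G(v),\chi_G(w)\}$ replaces $A(G)$ by $A(G)\ast\{v,w\}$. Since $A(G)$ is symmetric and zero-diagonal, so is $A(G)\ast\{v,w\}$, and this matrix is exactly $A(G^{vw})$. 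Consequently a single edge pivot induces a bijection $W(G)\to W(G^{vw})$ --- the identity on the pairs $\{\phi,\chi\}$ away from $v,w$, and the swap $\phi\leftrightarrow\chi$ at $v$ and $w$ --- that carries $M[IA(G)]$ isomorphically onto $M[IA(G^{vw})]$ and sends vertex pairs to vertex pairs; that is, it is an isomorphism of 2-sheltering matroids. Composing over a sequence of pivots and allowing graph isomorphisms gives the forward implication of statement~1.

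For the converse half of statement~1 I would start from a 2-sheltering isomorphism $\varphi\colon M[IA(G)]\to M[IA(H)]$ and set $B^{\ast}=\varphi(\Phi(G))$, a transversal basis of $M[IA(H)]$. The transversal bases of $M[IA(H)]$ are exactly the sets $B_{Y}=\{\chi_H(u):u\in Y\}\cup\{\phi_H(u):u\notin Y\}$ with $A(H)[Y]$ nonsingular, and these are the bases of the 2-matroid $\mathcal{Z}(M[IA(H)])$. Because $A(H)$ is zero-diagonal, every such $Y$ has even cardinality, so in the exchange property of Lemma~\ref{lem:char_bases_2matroid} the two skew pairs $p,q\subseteq B\Delta B'$ are always distinct; one application therefore replaces $B_{Y}$ by $B_{Y\Delta\{v,w\}}$ precisely when $vw$ is an edge of the currently pivoted graph, i.e.\ it realizes a single edge pivot. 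Using the exchange property to connect $B^{\ast}$ to $\Phi(H)$, I obtain a graph $H^{\ast}$ pivot equivalent to $H$ such that, after composing $\varphi$ with the 2-sheltering isomorphisms induced by those pivots, the standard basis $\Phi(G)$ is carried to $\Phi(H^{\ast})$ while pairs are preserved. Then $\varphi$ sends $\phi_G(v)\mapsto\phi_{H^{\ast}}(\sigma v)$ and $\chi_G(v)\mapsto\chi_{H^{\ast}}(\sigma v)$ for a bijection $\sigma$, and comparing the fundamental circuits of the $\chi$ elements forces $N_{H^{\ast}}(\sigma v)=\sigma(N_G(v))$; thus $\sigma$ is a graph isomorphism $G\to H^{\ast}$ and $G$ is pivot equivalent to $H$ up to isomorphism. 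I expect this reduction --- showing an arbitrary 2-sheltering isomorphism is realized by edge pivots together with a graph isomorphism --- to be the main obstacle of the proposition.

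Statement~2 then follows quickly: its forward implication is immediate from statement~1, since forgetting the partition turns a 2-sheltering isomorphism into a plain matroid isomorphism. For the ``however'' clause I would take $G=C_{3}\sqcup C_{3}$ and $H=C_{6}$. Both have the same bipartite double cover $C_{6}\sqcup C_{6}$, which (as explained for statement~3 below) is a fundamental graph of the respective $M[IA]$ matroid; since a binary matroid is determined by any one of its fundamental graphs, $M[IA(C_{3}\sqcup C_{3})]\cong M[IA(C_{6})]$. However $G$ and $H$ are not pivot equivalent up to isomorphism, because edge pivots preserve bipartiteness (an edge pivot at $vw$ with $v,w$ in opposite classes only toggles edges between the two classes), so the pivot class of the bipartite graph $C_{6}$ consists of bipartite graphs, whereas $C_{3}\sqcup C_{3}$ is not bipartite.

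For statement~3 the forward direction is again statement~2, and the substance is the converse. I would first compute the fundamental graph $D(G):=G_{\Phi(G)}(M[IA(G)])$: the fundamental circuit of $\chi_G(w)$ with respect to $\Phi(G)$ is $\{\chi_G(w)\}\cup\{\phi_G(v):v\in N_G(w)\}$, so $D(G)$ is the bipartite graph with biadjacency matrix $A(G)$, i.e.\ the bipartite double cover of $G$; when $G$ is bipartite this cover splits into two copies of $G$, so $D(G)\cong G\sqcup G$, and likewise $D(H)\cong H\sqcup H$. Now a matroid isomorphism $M[IA(G)]\to M[IA(H)]$ identifies $D(G)$ with a fundamental graph of $M[IA(H)]$ with respect to some basis, and all fundamental graphs of a fixed binary matroid are pivot equivalent up to isomorphism (basis exchanges are edge pivots on the fundamental graph, and this applies componentwise to a disconnected matroid); hence $G\sqcup G$ and $H\sqcup H$ are pivot equivalent up to isomorphism. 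Since edge pivots act within connected components, the multisets of pivot-equivalence classes of the components of $G\sqcup G$ and of $H\sqcup H$ coincide; halving these doubled multisets shows that $G$ and $H$ have the same component pivot classes, so $G$ is pivot equivalent to $H$ up to isomorphism. The only delicate points here are recognizing $D(G)$ as the bipartite double cover and the componentwise bookkeeping in the last step, both of which are clean once the double-cover identification is made.
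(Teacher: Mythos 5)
Your proposal is correct, and it is worth noting that the paper does not really prove Proposition~\ref{pivot2} at all --- it only sketches the three items and outsources the substance to citations --- so your write-up is considerably more self-contained than the original. The skeleton is the same: both treatments identify an edge pivot with a principal pivot transform / change of standard representation, both use the fundamental-graph description of $M[IA(G)]$ for bipartite $G$ together with Proposition~\ref{fundamental} for item~3, and you even chose the paper's exact counterexample $C_6$ versus $2C_3$ for item~2. The genuine divergences are these. For the converse of item~1 the paper simply cites Geelen's observation (via delta-matroids and twist equivalence) that edge pivots are the only elementary pivots available for the binary delta-matroid of a simple graph; you instead reprove exactly that fact from Lemma~\ref{lem:char_bases_2matroid}, using zero-diagonality to force the two skew pairs $p,q$ to be distinct (so every basis exchange among transversal bases is a double exchange, i.e.\ an edge pivot) and then descending on $|B\Delta B'|$ --- this is the right argument and it makes the proposition independent of the delta-matroid literature. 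For item~2's negative assertion the paper observes that $IA(C_6)$ and $IA(2C_3)$ have the same column multiset under a suitable labeling, while you derive the matroid isomorphism from the common bipartite double cover; your route silently needs either that the isomorphism of double covers can be chosen to respect the $\Phi$/$X$ bipartition (it can, as each $C_6$ component has one colour class in $\Phi$ and one in $X$ in both cases) or the self-duality $M[IA(G)]\cong M[IA(G)]^{\ast}$ recorded earlier in the paper, but either patch is one line. You also rule out pivot equivalence by preservation of bipartiteness where the paper uses preservation of connectedness; both work. Finally, for item~3 your explicit componentwise multiset bookkeeping (halving the doubled multiset of component pivot classes) fills in a step the paper glosses over when reducing from $G\sqcup G\sim H\sqcup H$ to $G\sim H$.
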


We mention Proposition~\ref{pivot2} to clarify the significance of the preceding subsection, and to contrast with Theorem~\ref{theory}; the results mentioned in Proposition~\ref{pivot2} are known, though perhaps not easy to recognize. Item 1 may be translated from 2-sheltering matroids through 2-matroids to delta-matroids. The translation is ``$G$ and $H$ are pivot equivalent (up to isomorphism) if and only if the corresponding binary delta-matroids are twist equivalent,'' and this assertion follows from the definition of twist equivalence and Geelen's observation~\cite{G} that edge pivots are the only elementary pivots available for the binary delta-matroid of a simple graph. To recognize item 3, note that if $G$ is bipartite and $M$ is a binary matroid with fundamental graph $G$ then 
\[
M[IA(G)]\cong 
M \left[ \begin{pmatrix}
I_1 & 0 &0 & A_1\\
0 & I_2 & A_{1}^{\text{T}} &0\\
\end{pmatrix} \right] 
\cong M[\begin{pmatrix}I_1 & A_1 \end{pmatrix}] \oplus M[\begin{pmatrix}I_2 & A_{1}^{\text{T}} \end{pmatrix}]
\cong M \oplus M^{\ast}
\text{,}
\]
where $I_1$ and $I_2$ are identity matrices. Consequently a fundamental graph of $M[IA(G)]$ consists of two disjoint copies of $G$, so item 3 follows from Proposition~\ref{fundamental}. The positive assertion of item 2 is easy to verify by comparing the matrices $IA(G)$ and $IA(G^{vw})$; it was mentioned in~\cite{Tnewnew}. 

Here is an example to illustrate the negative assertion of item 2. Let $C_6$ be the cycle graph of order 6, with the conventional vertex order -- vertex 1 is adjacent to vertices 6 and 2, vertex 2 is adjacent to vertices 1 and 3, etc. Let $2C_3$ be the disconnected graph consisting of two disjoint copies of $C_3$, with an unconventional vertex order: vertices 1, 3 and 5 lie on one 3-cycle, and vertices 2, 4 and 6 lie on the other. Then
\[
A(C_6)=
\begin{pmatrix}
0 & 1 & 0 & 0 & 0& 1 \\
1 & 0 & 1 & 0 & 0& 0 \\
0 & 1 & 0 & 1 & 0& 0 \\
0 & 0 & 1 & 0 & 1& 0 \\
0 & 0 & 0 & 1 & 0& 1 \\
1 & 0 & 0 & 0 & 1& 0 \\
\end{pmatrix}
 \text{and }
A(2C_3)=
\begin{pmatrix}
0 & 0 & 1 & 0 & 1& 0 \\
0 & 0 & 0 & 1 & 0& 1 \\
1 & 0 & 0 & 0 & 1& 0 \\
0 & 1 & 0 & 0 & 0& 1 \\
1 & 0 & 1 & 0 & 0& 0 \\
0 & 1 & 0 & 1 & 0& 0 \\
\end{pmatrix}
\text{.}
\]
Notice that the two matrices have the same columns. Then $IA(C_6)$ and $IA(2C_3)$ also have the same columns, so $M[IA(C_6)]$ and $M[IA(2C_3)]$ are isomorphic. However $C_6$ and $2C_3$ are not pivot equivalent (up to isomorphism); they are not even locally equivalent (up to isomorphism), as local equivalence preserves connectedness.

\section{Stable sets and transverse matroids}

\begin{proposition}
\label{allphi}Let $G$ be a looped simple graph, and let $J$ be an independent
set of a transverse matroid of $G$. Then there is a locally equivalent graph
$H$ such that only $\phi_{H}$ elements appear in the image of $J$ under an
induced isomorphism $M[IAS(G)]\rightarrow M[IAS(H)]$.
\end{proposition}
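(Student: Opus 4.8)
The plan is to argue by induction on the number $m$ of elements of $J$ that are not $\phi$-elements of the current graph, using only the induced isomorphisms of Proposition~\ref{iso3}. Since a composition of induced isomorphisms is induced, once I have transformed $G$ through a sequence of loop and local complementations into a graph $H$ for which the accumulated image of $J$ lies in $\Phi(H)$, that accumulated isomorphism is the desired induced isomorphism $M[IAS(G)]\to M[IAS(H)]$. When $m=0$ there is nothing to do, so I take $H=G$.

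First I would record, from Proposition~\ref{iso3}, how the generators act on the labels $\phi,\chi,\psi$. Both $\beta_{ns}^{v}$ and $\beta_{s}^{v}$ alter only the triple of $v$, exchanging $\phi_{G}(v)$ with $\psi_{G}(v)$ if $v$ is unlooped and with $\chi_{G}(v)$ if $v$ is looped; and $\beta_{s}^{v}$ in addition exchanges $\chi_{G}(w)\leftrightarrow\psi_{G}(w)$ while fixing $\phi_{G}(w)$ at each neighbour $w$ of $v$. Consequently no generator ever converts a $\phi$-element at a vertex different from its centre into a non-$\phi$ element, so $m$ can only be affected through the action on the centre's own triple. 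This settles the easy part of the inductive step: if $J$ contains $\psi_{G}(v)$ with $v$ unlooped, or $\chi_{G}(v)$ with $v$ looped, then $\beta_{ns}^{v}$ sends that element to $\phi$ and fixes every other label, so $m$ drops by exactly $1$.

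The remaining possibility is that the non-$\phi$ element of $J$ at $v$ is $\chi_{G}(v)$ with $v$ unlooped, or $\psi_{G}(v)$ with $v$ looped; this is the main obstacle, and it is exactly here that I use that $J$ is independent. In either case the element in question is the unique non-$\phi$ member of the neighbourhood circuit $\zeta_{G}(v)$ of Definition~\ref{neighcirc}. Since $\zeta_{G}(v)$ is a circuit and $J$ is independent, $\zeta_{G}(v)\not\subseteq J$; as the element at $v$ already lies in $J$, this forces $N_{G}(v)\ne\emptyset$ and yields a neighbour $w\in N_{G}(v)$ with $\phi_{G}(w)\notin J$. I would then apply $\beta_{s}^{w}$, which swaps $\chi_{G}(v)\leftrightarrow\psi_{G}(v)$ (because $v\in N_{G}(w)$) without changing the loop status of $v$, thereby turning the element of $J$ at $v$ into one of the easy forms above; and $\beta_{s}^{w}$ does not increase $m$, since on $w$'s own triple it fixes whichever of $\chi_{G}(w),\psi_{G}(w)$ may lie in $J$ (as $\phi_{G}(w)\notin J$) and on the other neighbours of $w$ it only interchanges $\chi$ with $\psi$. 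Applying $\beta_{ns}^{v}$ afterwards then reduces $m$, for a net decrease of at least $1$.

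Finally I would confirm that the induction is well founded across successive steps. At every stage the current image of $J$ is the image of $J$ under an isomorphism, hence still independent, so the circuit-and-neighbour argument applies verbatim in the current graph. Moreover the safe neighbour $w$ is required to satisfy $\phi_{G}(w)\notin J$, which automatically excludes any vertex already moved to $\phi$; together with the observation from the second paragraph that $\phi$-elements are disturbed only by operations centred at their own vertex, this shows that no corrected vertex is ever spoiled, so $m$ decreases strictly and the process terminates with the image of $J$ contained in $\Phi(H)$. I expect the only delicate point to be the bookkeeping that $\beta_{s}^{w}$ never increases $m$, which comes down to the per-triple label computations recorded in the second paragraph.
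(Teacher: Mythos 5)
Your proof is correct and follows essentially the same strategy as the paper's: induction on the number of non-$\phi$ elements of $J$, using the independence of $J$ together with the neighborhood circuit $\zeta_G(v)$ to locate a neighbor $w$ with $\phi_G(w)\notin J$ at which to locally complement. The only differences are cosmetic --- the paper first strips all loops to reduce cases and uses $\beta_s^v$ where you use $\beta_{ns}^v$, and it splits your single ``hard'' case into two subcases that your circuit argument handles uniformly.
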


\begin{proof}
According to part 1 of Proposition \ref{iso3} we lose no generality if we
remove all loops in $G$; this avoids unnecessary proliferation of cases.

The proposition is proven by induction of the number $m$ of non-$\phi_{G}$
elements included in $J$. If $m=0$ then $H=G$ satisfies the proposition.

Proceeding inductively, suppose $m>0$ and $v$ is a vertex of $G$ with
$\phi_{G}(v)\notin J$. If $J$ contains $\psi_{G}(v)$, then the image of $J$
under the isomorphism $\beta_{s}^{v}:M[IAS(G)]\rightarrow M[IAS(G_{s}^{v})]$
of Proposition \ref{iso3} contains $\phi_{G_{s}^{v}}(v)$ in addition to every
$\phi_{G_{s}^{v}}(w)$ such that $\phi_{G}(w)\in J$. As $\beta_{s}^{v}(J)$ has
only $m-1$ non-$\phi_{G_{s}^{v}}$ elements, the inductive hypothesis applies.

Suppose instead that every non-$\phi_{G}$ element of $J$ is a $\chi_{G}$
element. We distinguish two cases. Case 1. Suppose $v$ and $w$ are neighbors with
$\chi_{G}(v)$, $\chi_{G}(w)\in J$. Then the image of $J$ under the isomorphism
$\beta_{s}^{v}:M[IAS(G)]\rightarrow M[IAS(G_{s}^{v})]$ contains $\psi
_{G_{s}^{v}}(w)$ in addition to every $\phi_{G_{s}^{v}}(x)$ such that
$\phi_{G}(x)\in J$. Consequently the argument of the preceding paragraph
applies to $\beta_{s}^{v}(J)$. Case 2. Suppose $\chi_{G}(w)\in J$ and there is no
neighbor $v$ of $w$ with $\chi_{G}(v)\in J$. It is impossible that $\phi
_{G}(v)\in J$ $\forall v\in N(w)$, as $\zeta_{w}=\{\chi_{G}(w)\}\cup\{\phi
_{G}(v)\mid v\in N(w)\}$ is a circuit. Consequently there must be a $v\in
N(w)$ with $\phi_{G}(v)\notin J$. Then the image of $J$ under the isomorphism
$\beta_{s}^{v}:M[IAS(G)]\rightarrow M[IAS(G_{s}^{v})]$ contains $\psi
_{G_{s}^{v}}(w)$ in addition to every $\phi_{G_{s}^{v}}(x)$ such that
$\phi_{G}(x)\in J$. Consequently the argument of the preceding paragraph
applies to $\beta_{s}^{v}(J)$.
\end{proof}

A special case of Proposition \ref{allphi} is particularly striking. Let $G$
be a looped simple graph, and let $B$ be a transversal of $W(G)$ that is a basis of $M[IAS(G)]$.
Then Proposition \ref{allphi} tells us that there is a locally equivalent
graph $H$ and an induced isomorphism $\beta:M[IAS(G)]\rightarrow M[IAS(H)]$
such that $\beta(B)=\{\phi_{H}(v)\mid v\in V(H)\}$; we use the notation
$\{\phi_{H}(v)\mid v\in V(H)\}=\Phi(H)$. What makes this special case striking
is the fact that we can use $M[IAS(G)]$ to describe such a graph $H$
explicitly. For each $v\in V(G)$ let $B(v)$, $C(v)$ and $D(v)$ be the elements
of $\tau_{G}(v)$, with $B(v)\in B$ and $C(v),D(v)\notin B$. Then
$\beta(B(v))=\phi_{H}(\beta(v))$. Let $\gamma_{v}$ be the fundamental circuit
of $\beta(C(v))$ with respect to the basis $\Phi(H)$ of $M[IAS(H)]$.
Considering the $\chi_{H}(\beta(v))$ and $\psi_{H}(\beta(v))$ columns of the
matrix $IAS(H)$, it is easy to see that for $w\neq v\in V(G)$, $\beta(v)$ and
$\beta(w)$ are neighbors in $H$ if and only if $\phi_{H}(\beta(w))\in
\gamma_{v}$. (Note that $\gamma_{v}\Delta\{\phi_{H}(\beta(v))\}$ is the
fundamental circuit of $\beta(D(v))$ with respect to $\Phi(H)$, so reversing
the labels of $C(v)$ and $D(v)$ would not affect the validity of the preceding
sentence.) As $\beta$ is a matroid isomorphism, it follows that $v$ and $w$
are neighbors in $H$ if and only if $B(w)$ is an element of the fundamental
circuit of $C(v)$ with respect to $B$ in $M[IAS(G)]$.

We summarize this special case as follows.

\begin{corollary}
\label{allphibasis}Let $G$ be a looped simple graph, and $B\in\mathcal{T}(G)$ a basis of $M[IAS(G)]$. Suppose $C\in\mathcal{T}(G)$ has $B \cap C=\emptyset$. Let $H$ be the graph with $V(H)=V(G)$, in
which two vertices $v$ and $w$ are neighbors if and only if $B(w)$ is an
element of the fundamental circuit of $C(v)$ with respect to $B$ in
$M[IAS(G)]$. Then $G$ and $H$ are locally equivalent, and there is an induced
isomorphism $\beta:M[IAS(G)]\rightarrow M[IAS(H)]$ with $\beta(B)=\Phi(H)$.
\end{corollary}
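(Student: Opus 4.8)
The plan is to produce the graph $H$ directly from Proposition~\ref{allphi} and then check that the graph it furnishes coincides with the one described in the statement. First I would apply Proposition~\ref{allphi} to the independent set $J=B$: since $B$ is a transversal that is a basis of $M[IAS(G)]$, it is in particular an independent set of the transverse matroid $M[IAS(G)]\mid B$, so Proposition~\ref{allphi} yields a locally equivalent graph $H$ and an induced isomorphism $\beta\colon M[IAS(G)]\to M[IAS(H)]$ under which $\beta(B)$ consists only of $\phi_H$ elements. Because $B$ is a transversal it meets each of the $|V(G)|$ vertex triples exactly once, so $|\beta(B)|=|V(G)|=|V(H)|=|\Phi(H)|$; hence $\beta(B)=\Phi(H)$. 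Moreover local equivalence does not alter the vertex set, and the induced isomorphisms of Proposition~\ref{iso3} all have the identity vertex bijection, so I may take $V(H)=V(G)$ with $\beta(\tau_G(v))=\tau_H(v)$ and $\beta(B(v))=\phi_H(v)$ for every $v$ (the last equality because $\beta(B(v))$ is the unique element of $\Phi(H)\cap\tau_H(v)$).

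Next I would translate adjacency in $H$ into the language of fundamental circuits in $M[IAS(G)]$. Since $\beta$ is a matroid isomorphism carrying the basis $B$ to the basis $\Phi(H)$, it sends the fundamental circuit of $C(v)$ with respect to $B$ to the fundamental circuit $\gamma_v$ of $\beta(C(v))$ with respect to $\Phi(H)$, and it sends $B(w)$ to $\phi_H(w)$. As $C(v)\neq B(v)$ and $\beta$ is injective, $\beta(C(v))$ lies in $\tau_H(v)\setminus\{\phi_H(v)\}$, so it is either $\chi_H(v)$ or $\psi_H(v)$; in either case reading off the $\phi_H$ columns appearing in the corresponding column of $IAS(H)$ recovers exactly the neighbors of $v$, that is, $\gamma_v$ is the neighborhood circuit of $v$ in $H$ up to the presence of $\phi_H(v)$. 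As noted before the statement, the fundamental circuits of $\chi_H(v)$ and of $\psi_H(v)$ differ only in whether they contain $\phi_H(v)$, so for $w\neq v$ the membership of $\phi_H(w)$ is unaffected by that choice. Consequently, for distinct $v,w$, we have $w\in N_H(v)$ if and only if $\phi_H(w)\in\gamma_v$, which holds if and only if $B(w)$ lies in the fundamental circuit of $C(v)$ with respect to $B$.

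This shows that the graph $H$ furnished by Proposition~\ref{allphi} has precisely the adjacencies prescribed in the statement, so it is the graph $H$ of the corollary, and it comes equipped with the desired induced isomorphism $\beta$ satisfying $\beta(B)=\Phi(H)$; since $H$ is obtained from $G$ by local complementations (and loop complementations) it is locally equivalent to $G$, completing the argument.

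The step I expect to require the most care is the bookkeeping in the first two paragraphs: confirming that the vertex bijection associated with $\beta$ is the identity, so that the vertex labels of $H$ match those of $G$, and handling the harmless $\chi_H(v)$-versus-$\psi_H(v)$ ambiguity so that the neighbor relation for distinct vertices is read off unambiguously. Because we may delete all loops at the outset, as in the proof of Proposition~\ref{allphi}, the graph $H$ is loopless, and the only place the $\phi_H(v)$ entry of $\gamma_v$ could interfere is the diagonal case $w=v$, which is irrelevant to the open-neighborhood adjacency being described.
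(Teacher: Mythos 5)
Your proposal is correct and follows essentially the same route as the paper: apply Proposition~\ref{allphi} to the transversal basis $B$ to get $H$ and $\beta$ with $\beta(B)=\Phi(H)$, then identify the fundamental circuit of $\beta(C(v))$ with respect to $\Phi(H)$ with the neighborhood of $v$ in $H$ (modulo the harmless $\chi_H(v)$-versus-$\psi_H(v)$ ambiguity, which only affects the diagonal term $\phi_H(v)$) and pull back through $\beta$. Your added care about the vertex bijection being the identity is a point the paper leaves implicit, and it is handled correctly.
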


Notice that we have proven indirectly that $H$ is well defined, i.e., that
$B(w)$ is an element of the fundamental circuit of $C(v)$ with respect to $B$
if and only if $B(v)$ is an element of the fundamental circuit of $C(w)$ with
respect to $B$. A direct proof of this fact would use the same argument as the
proof of Proposition \ref{prop:off_diag_symmetric_repr}.

Corollary \ref{allphibasis} tells us that there is a close connection between
the properties of graphs locally equivalent to $G$ and the properties of
bases of $M[IAS(G)]$ that are transversals of $W(G)$. For instance, $G$ is locally equivalent to a
$d$-regular graph if and only if $M[IAS(G)]$ has a basis $B\in\mathcal{T}(G)$ with
respect to which all fundamental circuits are of size $d+1$ or $d+2$.

The next result implies Theorem \ref{tranmat} of the introduction.

\begin{theorem}
\label{stable}Suppose $G$ is a looped simple graph, $T\in\mathcal{T}(G)$, and
$B$ is a basis of the transverse matroid $M=M[IAS(G)]\mid T$. Let $V_{B}$ be
the subset of $V(G)$ consisting of vertices corresponding to elements of $B$.
Then there is a looped simple graph $H$ that is locally equivalent to $G$,
such that an induced isomorphism $\beta:M[IAS(G)]\rightarrow M[IAS(H)]$ has
these two properties.

\begin{enumerate}
\item The image of $V(G)-V_{B}$ under the associated bijection $\beta
:V(G)\rightarrow V(H)$ is a stable set of $H$.

\item The image of $T$ under $\beta$ is
\[
\{\phi_{H}(\beta(v))\mid v\in V_{B}\}\cup
\bigcup\limits_{v\in V(G)-V_{B}}
\zeta_{H}(\beta(v))\text{ .}
\]
\end{enumerate}
\end{theorem}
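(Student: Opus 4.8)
The plan is to reduce almost everything to Proposition~\ref{allphi}. Since $B$ is a basis of the transverse matroid $M=M[IAS(G)]\mid T$, it is in particular an independent set of a transverse matroid, so Proposition~\ref{allphi} furnishes a locally equivalent graph $H$ and an induced isomorphism $\beta:M[IAS(G)]\rightarrow M[IAS(H)]$ under which every element of $\beta(B)$ is a $\phi_H$ element. Because $\beta$ carries each vertex triple $\tau_G(v)$ onto $\tau_H(\beta(v))$ and $B$ meets the triple of each $v\in V_B$ in exactly one element, this forces $\beta(B)=\{\phi_H(\beta(v))\mid v\in V_B\}$. Writing $X=\{\beta(v)\mid v\in V(G)-V_B\}$, we then have $V(H)-X=\{\beta(v)\mid v\in V_B\}$ and $\beta(B)=\{\phi_H(w)\mid w\in V(H)-X\}$. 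The whole theorem will follow once we understand the images $\beta(t)$ of the non-basis elements $t\in T\setminus B$.

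The key observation is a span argument. Fix $v\in V(G)-V_B$ and let $t$ be the unique element of $T\cap\tau_G(v)$, so $\beta(t)$ lies in the triple $\tau_H(\beta(v))$. Since $\beta(B)$ is a basis of the restricted matroid $M[IAS(H)]\mid\beta(T)$ and $\beta(t)\in\beta(T)$, the element $\beta(t)$ lies in the span of $\beta(B)$ inside $M[IAS(H)]$. But the span of $\{\phi_H(w)\mid w\in V(H)-X\}$ is exactly the set of columns of $IAS(H)$ that vanish on the rows indexed by $X$. Inspecting the three candidate columns in the triple $\tau_H(\beta(v))$, I would argue: the column $\phi_H(\beta(v))$ has a nonzero entry in row $\beta(v)\in X$ and so is excluded; the column $\chi_H(\beta(v))$ (the $\beta(v)$ column of $A(H)$) vanishes on $X$ precisely when $\beta(v)$ has no neighbor in $X$ and $\beta(v)$ is unlooped; and $\psi_H(\beta(v))$ (the $\beta(v)$ column of $A(H)+I$) vanishes on $X$ precisely when $\beta(v)$ has no neighbor in $X$ and $\beta(v)$ is looped. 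Thus the span condition simultaneously yields that $\beta(v)$ has no neighbor in $X$ and that $\beta(t)$ is exactly the generator of the neighborhood circuit: $\beta(t)=\chi_H(\beta(v))$ if $\beta(v)$ is unlooped and $\beta(t)=\psi_H(\beta(v))$ if $\beta(v)$ is looped.

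Running this over all $v\in V(G)-V_B$ gives property~(1): every vertex of $X$ has all its neighbors outside $X$, so $X$ is stable in $H$. For property~(2), note that the expansion of $\beta(t)$ in the basis $\Phi(H)$ is $\sum_{u\in N_H(\beta(v))}\phi_H(u)$ (no diagonal term, by the loop matching just established), so the fundamental circuit of $\beta(t)$ with respect to $\Phi(H)$ is precisely $\zeta_H(\beta(v))$. Assembling, $\beta(T)=\beta(B)\cup\{\beta(t)\mid v\in V(G)-V_B\}$, and since stability of $X$ gives $N_H(\beta(v))\subseteq V(H)-X$, every $\phi_H$ element occurring in a circuit $\zeta_H(\beta(v))$ already appears in $\{\phi_H(\beta(v))\mid v\in V_B\}$. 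Hence $\beta(T)$ coincides with $\{\phi_H(\beta(v))\mid v\in V_B\}\cup\bigcup_{v\in V(G)-V_B}\zeta_H(\beta(v))$, which is property~(2).

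The main obstacle is essentially packaged inside Proposition~\ref{allphi}: producing a locally equivalent $H$ on which the basis $B$ becomes all-$\phi$ is the substantive step, and it is already available. After that, the only real content is the span argument, whose delicate point is the loop bookkeeping --- verifying that the forced vanishing of the diagonal entry selects the correct generator $\chi_H$ versus $\psi_H$, so that the neighborhood circuit comes out right in both the unlooped and looped cases.
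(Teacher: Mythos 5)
Your proposal is correct and follows essentially the same route as the paper: both proofs invoke Proposition~\ref{allphi} to make $\beta(B)$ consist entirely of $\phi_H$ elements and then finish with a linear-algebra argument on the columns of $IAS(H)$, observing that each $\beta(t)$ for $t\in T\setminus B$ must lie in the span of $\beta(B)$ and hence vanish on the rows indexed by $X$, which simultaneously forces stability of $X$ and identifies $\beta(t)$ as the generator of $\zeta_H(\beta(v))$. The only cosmetic difference is that the paper first strips all loops from $H$ via Proposition~\ref{iso3} and then rules out $\psi_H$ elements by an independence argument, whereas you keep loops and handle the $\chi_H$/$\psi_H$ dichotomy uniformly through the diagonal entry.
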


\begin{proof}
Proposition \ref{allphi} tells us that there is a graph $H$ locally equivalent
to $G$, such that only $\phi_{H}$ elements appear in the image of $B$ under an
induced isomorphism $\beta:M[IAS(G)]\rightarrow M[IAS(H)]$. According to part
1 of Proposition \ref{iso3}, this property is not affected if we remove all
loops from $H$, so we may just as well assume that $H$ is a simple graph.

We claim that the image of $T-B$ under $\beta$ cannot include any $\phi_{H}$
or $\psi_{H}$ element. Note that the definition of $IAS(H)$ implies that no
set of $\phi_{H}$ columns can sum to 0, as their nonzero entries appear in
different rows.\ Also, no subtransversal consisting of $\phi_{H}$ columns and
a single $\psi_{H}$ column can sum to $0$, because none of the $\phi_{H}$
columns has a nonzero entry in the same row as the diagonal entry of the
$\psi_{H}$ column. It follows that every subtransversal of $W(H)$ containing
some $\phi_{H}$\ elements and a single $\psi_{H}$ element is independent.
Consequently $\beta(T-B)$ cannot contain any $\phi_{H}$ or $\psi_{H}$ element,
because such an element would provide an independent set larger than $B$.

Suppose now that $x,y\in V(G)-V_{B}$ are two vertices of $G$ whose images
under $\beta$ are neighbors in $H$. Then the $\chi_{H}(\beta(x))$ column of
$IAS(H)$ has a nonzero entry in the $\beta(y)$ row. As $y\notin V_{B}$, it
follows that the $\chi_{H}(\beta(x))$ column of $IAS(H)$ is not an element of
the span of the $\phi_{H}(\beta(v))$ columns with $v\in V_{B}$. This is
impossible, since $\beta(B)$ spans $\beta(M)$. Hence there are no such $x$ and
$y$, i.e., $\beta(V(G)-V_{B})$ is a stable set of $H$.
\end{proof}

Theorem \ref{stable} also yields a rather complicated description of the
nullity of an arbitrary subtransversal:

\begin{corollary}
\label{nullity}Let $G$ be a looped simple graph with a subtransversal
$S\in\mathcal{S}(G)$, and let $\nu$ be a non-negative integer. Then $\nu$ is
the nullity of $S$ in $M[IAS(G)]$ if and only if there are a looped simple
graph $H$ and a stable set $X\subseteq V(H)$ that satisfy these three properties:

\begin{enumerate}
\item $H$ is locally equivalent to $G$.
\item $\left\vert X\right\vert =\nu$.
\item An induced isomorphism $\beta:M[IAS(G)]\rightarrow M[IAS(H)]$ has
\[
\bigcup\limits_{x\in X} \zeta_{H}(x) \subseteq \beta(S) \subseteq \{\phi_{H}(v)\mid v\in V(H)-X\} \cup \bigcup\limits_{x\in X} \zeta_{H}(x)\text{ .}
\]
\end{enumerate}
\end{corollary}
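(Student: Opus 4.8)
The plan is to reduce both directions to Theorem~\ref{stable} by extending the subtransversal $S$ to a transversal $T$ having the \emph{same} nullity as $S$, and then reading off the sandwiching from the conclusion of that theorem. Throughout I write $\mathrm{cl}$ and $r$ for the closure and rank operators of $M[IAS(G)]$, and write $\sigma_H(x)$ for the $\chi$- or $\psi$-element appearing in $\zeta_H(x)$ and $N_H(X)=\bigcup_{x\in X}N_H(x)$. The key elementary fact is that nullity is monotone under inclusion, so any transversal $T\supseteq S$ has $\mathrm{nullity}(T)\geq\mathrm{nullity}(S)=\nu$.

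For the (easier) ``if'' direction I would start from the data $H$, $X$, $\beta$ and compute the nullity of $\beta(S)$ directly in $M[IAS(H)]$; since $\beta$ is a matroid isomorphism this equals the nullity of $S$. The sandwiching condition forces $\beta(S)=\{\sigma_H(x)\mid x\in X\}\cup\{\phi_H(w)\mid w\in N_H(X)\}\cup P$ with $P\subseteq\{\phi_H(v)\mid v\notin X,\ v\notin N_H(X)\}$. Because $X$ is stable, each $\sigma_H(x)$ equals the sum of the columns $\phi_H(w)$ for $w\in N_H(x)\subseteq V(H)-X$, so the set $\{\sigma_H(x)\mid x\in X\}\cup\{\phi_H(w)\mid w\in N_H(X)\}$ has rank $|N_H(X)|$ and size $|X|+|N_H(X)|$; adjoining the independent $\phi$-columns of $P$ raises size and rank equally. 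Hence the nullity of $\beta(S)$, and therefore of $S$, is exactly $|X|=\nu$.

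For the ``only if'' direction I would first fix a basis $B_0$ of $S$ in $M[IAS(G)]$ and grow $S$ to a transversal $T$ one triple at a time: at each missing triple $\omega$ I apply the sheltering property (Definition~\ref{shelter}) to a basis $B'$ of the current subtransversal. Since $B'\cap\omega=\emptyset$, the skew-pair axiom yields an element $e\in\omega$ with $B'\cup\{e\}$ independent, hence $e\notin\mathrm{cl}$ of the current set; thus every adjoined element raises the rank and the nullity stays equal to $\nu$. Setting $B=B_0\cup(T\setminus S)$ gives a basis of the transverse matroid $M[IAS(G)]\mid T$ with $B\cap S=B_0$ and $T\setminus S\subseteq B$, and $|V(G)|-|B|=\nu$. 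Applying Theorem~\ref{stable} to $T$ and $B$ produces $H$, an induced isomorphism $\beta$ with $\beta(B)=\Phi(H)$, and a stable set $X=\beta(V(G)-V_B)$ of size $\nu$, with $\beta(T)$ equal to the right-hand side of the displayed sandwich; the right-hand inclusion $\beta(S)\subseteq\beta(T)$ is then immediate.

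The heart of the argument, and the step I expect to be the main obstacle, is the left-hand inclusion $\bigcup_{x\in X}\zeta_H(x)\subseteq\beta(S)$. I would prove it by pulling each neighborhood circuit back through $\beta$: since $\beta(B)=\Phi(H)$, the set $\beta^{-1}(\zeta_H(\beta(v)))$ is exactly the fundamental circuit of the $T$-element $e_v$ of $\tau_G(v)$ with respect to the basis $B$, for each $v\in V(G)-V_B$. Because $T\setminus S\subseteq B$, such an $e_v$ lies in $S\setminus B_0\subseteq\mathrm{cl}(B_0)$, and since $B_0\subseteq B$, uniqueness of representation with respect to the basis $B$ forces this fundamental circuit into $\{e_v\}\cup B_0\subseteq S$. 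Applying $\beta$ gives $\zeta_H(\beta(v))\subseteq\beta(S)$ for every $v\in V(G)-V_B$, which is the desired inclusion. The two genuinely delicate points are that the extension can always be made rank-increasing (handled by the sheltering axiom) and that the dependent part of $S$ contributes fundamental circuits confined to $B_0$ (handled by $S\subseteq\mathrm{cl}(B_0)$ together with $B_0\subseteq B$).
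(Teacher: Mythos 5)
Your proof is correct and follows essentially the same route as the paper's: both directions reduce to Theorem~\ref{stable}, with the same rank/nullity computation for the ``if'' direction and the same fundamental-circuit argument (which the paper leaves implicit in the phrase ``follow immediately'') establishing the left-hand inclusion. The only difference is minor: the paper applies Theorem~\ref{stable} to an arbitrary transversal $T\supseteq S$ and a basis $B$ containing a fixed maximal independent subset $J$ of $S$, taking $X=\beta(V_S-V_B)$ to be a subset of the resulting stable set, whereas you first extend $S$ to a transversal of the same nullity via the sheltering axiom so that $X$ is the entire stable set; both variants work.
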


\begin{proof}
Suppose the three properties hold, and let $T\in\mathcal{T}(H)$ be the
transversal%
\[
T=\{\phi_{H}(v)\mid v\in V(H)-X\}\cup
\bigcup\limits_{x\in X}
\zeta_{H}(x)\text{ .}%
\]
As $X$ is a stable set in $H$, the transverse matroid $M[IAS(H)]\mid T$ is
represented by a matrix of the form
\[
\bordermatrix{
& X & N(X) & Y \cr
X & 0 & 0 & 0 \cr
N(X) & A & I_1 & 0 \cr
Y & 0 & 0 & I_2 \cr
}.
\]
Here $A$ records adjacencies between vertices in $X$ and vertices in $N(X)$,
and $I_{1},I_{2}$ are identity matrices. The elements of $T$ corresponding to
columns of $A$ and $I_{1}$ are all contained in $\beta(S)$, so $\left\vert
X\right\vert $ is the nullity of $\beta(S)$ in $M[IAS(H)]$.

Suppose conversely that the nullity of $S$ is $\nu$. Let $V_{S}=\{v\in
V(G)\mid S$ contains an element of $\tau_{G}(v)\}$, and let $J$ be an
independent subset of $S$ with $\left\vert S\right\vert -\nu$ elements. If $M$
is a transverse matroid of $G$ that contains $S$, then $M$ has a basis $B$
that contains $J$. The three properties of the statement follow immediately
from Theorem \ref{stable}, with $X=\beta(V_{S}-V_{B})$.
\end{proof}

Notice that in general, choosing a different independent set $J$ will yield a different locally equivalent graph $H$.

\section{Disjoint transversals and bipartite graphs} \label{sec:bipartite}

If $G$ is a looped simple graph we denote by $\Phi(G)$, $X(G)$ and $\Psi(G)$ the transversals of $W(G)$ that include all the $\phi_{G}$, $\chi_{G}$ and $\psi_{G}$ elements (respectively). In this section we  characterize local equivalence to bipartite graphs in several ways, expanding on a result of Bouchet~\cite[Corollary (3.4)]{Bi2}.

\begin{proposition}
\label{basis}Let $T_{1}$ and $T_{2}$ be disjoint transversals of $W(G)$. Then
every independent subtransversal $J\subseteq T_{1}\cup T_{2}$ is contained in
a basis $B$ of $M[IAS(G)]$ that is a transversal contained in $T_{1}\cup T_{2}$.
\end{proposition}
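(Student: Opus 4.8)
The plan is to build $B$ by greedily extending $J$, exploiting the defining exchange property of the sheltering matroid $(M[IAS(G)],\Omega)$ (Definition~\ref{shelter}), where $\Omega$ is the partition of $W(G)$ into vertex triples. First I would record two elementary facts. Since $IAS(G)=\begin{pmatrix} I & A(G) & A(G)+I\end{pmatrix}$ has $|V(G)|$ rows and its first block is the identity, $r(M[IAS(G)])=|V(G)|$; hence any independent transversal of $W(G)$ automatically has size equal to the rank and is therefore a basis. Second, because $T_{1}$ and $T_{2}$ are \emph{disjoint} transversals, for every vertex $v$ the set $\tau_{G}(v)\cap(T_{1}\cup T_{2})$ consists of exactly two of the three elements of the triple $\tau_{G}(v)$.

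The heart of the argument is the observation that the sheltering axiom lets me choose which skew pair to use. Suppose $J$ is an independent subtransversal contained in $T_{1}\cup T_{2}$ with $|J|<|V(G)|$. Since $J$ is a subtransversal, it meets exactly $|J|<|V(G)|$ triples, so there is a vertex $v$ with $\tau_{G}(v)\cap J=\emptyset$. Let $\{x,y\}=\tau_{G}(v)\cap(T_{1}\cup T_{2})$; by the second fact this is a skew pair of $\omega=\tau_{G}(v)$, and it is disjoint from $J$. Applying the 3-sheltering property of $M[IAS(G)]$ to the independent subtransversal $J$ and this particular skew pair, I conclude that $J\cup\{x\}$ or $J\cup\{y\}$ is independent. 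Crucially, both candidate elements lie in $T_{1}\cup T_{2}$, so whichever one is added leaves the enlarged set an independent subtransversal still contained in $T_{1}\cup T_{2}$.

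Iterating this step strictly increases $|J|$ while preserving the invariant \emph{``independent subtransversal contained in $T_{1}\cup T_{2}$,''} so after finitely many steps the extended set meets every vertex triple. At that point it is an independent transversal of $W(G)$ contained in $T_{1}\cup T_{2}$; by the first fact it has size $|V(G)|=r(M[IAS(G)])$ and is therefore the desired basis $B$, which contains the original $J$.

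I do not anticipate a serious obstacle; the single point requiring care is the choice of skew pair. The sheltering axiom permits any two-element subset of a skew class disjoint from the current independent set, and taking that subset to be precisely the two elements of $\tau_{G}(v)$ lying in $T_{1}\cup T_{2}$ is what forces each extension to stay inside $T_{1}\cup T_{2}$ rather than escaping to the third, forbidden, element of the triple. One could phrase the same argument by first restricting to the $2$-sheltering matroid $M[IAS(G)]\mid(T_{1}\cup T_{2})$ and invoking that every independent set of a nondegenerate multimatroid extends to a basis of cardinality $|\Omega|$, but the direct application of Definition~\ref{shelter} above avoids even that detour.
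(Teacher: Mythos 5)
Your proof is correct and is essentially the paper's argument: the paper restricts the multimatroid $\mathcal{Z}(M[IAS(G)],W(G))$ to $T_1\cup T_2$ to get a nondegenerate 2-matroid and cites the fact that bases of nondegenerate multimatroids have cardinality $|\Omega|$, while you simply inline the greedy exchange argument underlying that fact by applying the sheltering axiom to the skew pair $\tau_G(v)\cap(T_1\cup T_2)$ at each step. The key observation in both versions is the same — disjointness of $T_1$ and $T_2$ makes every triple contribute a full skew pair inside $T_1\cup T_2$, so the extension never escapes — and your closing remark correctly identifies the two phrasings as equivalent.
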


\begin{proof}
Recall that $Q = (M[IAS(G)],W(G))$ is a 3-sheltering matroid. Thus, $\mathcal{Z}(Q)$ is a 3-matroid and so $\mathcal{Z}(Q)[T_{1}\cup T_{2}]$ a 2-matroid and therefore nondegenerate. Recall from Subsection~\ref{ssec:mmatroids} that the bases of nondegenerate multimatroids are transversals.
\end{proof}

\begin{corollary}
Let $T_{1}$ and $T_{2}$ be disjoint transversals of $W(G)$. Then there is a
looped simple graph $H$ locally equivalent to $G$, such that an induced
isomorphism $\beta:M[IAS(G)]\rightarrow M[IAS(H)]$ has $\beta(T_{1}\cup
T_{2})=\Phi(H)\cup X(H)$.
\end{corollary}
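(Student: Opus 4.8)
The plan is to reduce the statement to Corollary~\ref{allphibasis} and then to clean up the image using loop complementations. First I would produce a basis of $M[IAS(G)]$ that lies inside $T_{1}\cup T_{2}$: applying Proposition~\ref{basis} to the (independent) empty subtransversal $J=\emptyset$ yields a basis $B$ of $M[IAS(G)]$ which is a transversal contained in $T_{1}\cup T_{2}$. Since $T_{1}$ and $T_{2}$ are disjoint transversals, for each vertex $v$ the set $(T_{1}\cup T_{2})\cap\tau_{G}(v)$ has exactly two elements; hence the complementary set $C=(T_{1}\cup T_{2})\setminus B$ meets each vertex triple in exactly one element. Thus $C$ is a transversal with $B\cap C=\emptyset$ and $B\cup C=T_{1}\cup T_{2}$.

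Next I would apply Corollary~\ref{allphibasis} to the pair $(B,C)$. This supplies a graph $H_{0}$ locally equivalent to $G$ and an induced isomorphism $\beta_{0}:M[IAS(G)]\rightarrow M[IAS(H_{0})]$ with $\beta_{0}(B)=\Phi(H_{0})$. Because $\beta_{0}$ respects vertex triples and carries $B$ to all the $\phi$ elements, the transversal $\beta_{0}(C)$ is disjoint from $\Phi(H_{0})$ and so consists entirely of $\chi_{H_{0}}$ and $\psi_{H_{0}}$ elements, exactly one per vertex.

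The final step converts the remaining $\psi$ elements into $\chi$ elements. For each vertex $w$ of $H_{0}$ with $\psi_{H_{0}}(w)\in\beta_{0}(C)$, I would perform a loop complementation at $w$; let $H$ be the resulting graph and let $\beta$ be the composition of $\beta_{0}$ with the corresponding isomorphisms $\beta_{\ell}^{w}$ of Proposition~\ref{iso3}. By part~1 of that proposition, each such complementation merely interchanges the $\chi$ and $\psi$ elements at $w$ and fixes all other elements, so it sends $\psi_{H_{0}}(w)$ to $\chi_{H}(w)$ while leaving $\Phi(H_{0})$ and the already-present $\chi$ elements untouched. Consequently $\beta(B)=\Phi(H)$ and $\beta(C)=X(H)$, whence $\beta(T_{1}\cup T_{2})=\Phi(H)\cup X(H)$, as required. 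The computations here are routine; the only point requiring attention is that this cleanup must not disturb the $\phi$ elements already produced by Corollary~\ref{allphibasis}, which is precisely why loop complementation---the operation swapping $\chi$ and $\psi$ at a single vertex while fixing $\phi$---is the right tool for the last step.
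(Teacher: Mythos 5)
Your proposal is correct and follows essentially the same route as the paper: obtain a transversal basis $B\subseteq T_{1}\cup T_{2}$ via Proposition~\ref{basis}, move $B$ onto $\Phi(H_{0})$ by an induced isomorphism, and finish with loop complementations converting the remaining $\psi$ elements to $\chi$ elements. The only cosmetic difference is that you invoke Corollary~\ref{allphibasis} where the paper cites Proposition~\ref{allphi} directly, but the former is just a packaged special case of the latter, so the substance is identical.
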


\begin{proof}
Let $B$ be a basis of $M[IAS(G)]$ that is a transversal contained in $T_{1}\cup
T_{2}$. Proposition \ref{allphi} tells us that there is a looped simple graph
$H_{0}$ that is locally equivalent to $G$, such that an induced isomorphism
$\beta:M[IAS(G)]\rightarrow M[IAS(H_{0})]$ has $\beta(B)=\Phi(H_{0})$. It
follows that $\beta((T_{1}\cup T_{2})-B)$ is a transversal contained in
$X(H_{0})\cup\Psi(H_{0})$. Loop complementations at vertices of $H_{0}$ that
correspond to elements of $\beta((T_{1}\cup T_{2})-B)\cap\Psi(H_{0})$ will
produce a locally equivalent graph $H$ that satisfies the statement.
\end{proof}

Recall that if $M_1$ and $M_2$ are matroids on disjoint ground sets $W_1$ and $W_2$, then their direct sum $M_{1} \oplus M_{2}$ is the matroid on $W_{1} \cup W_{2}$ whose rank function is given by $r(S)=r_{1}(S\cap W_1)+r_{2}(S\cap W_2)$.

\begin{corollary}
\label{bip}
Let $G$ be a looped simple graph. Then any one of the following conditions is
equivalent to the others:

\begin{enumerate}
\item $G$ is locally equivalent to a bipartite graph.

\item $G$ has a pair of disjoint transversals with $r(T_{1})+r(T_{2}%
)=\left\vert V(G)\right\vert $.

\item $G$ has a pair of disjoint transversals with%
\[
M[IAS(G)]\mid(T_{1}\cup T_{2})=(M[IAS(G)]\mid T_{1})\oplus(M[IAS(G)]\mid
T_{2})\text{.}
\]

\item $G$ has a pair of disjoint transversals with%
\[
(M[IAS(G)]\mid T_{1})\cong(M[IAS(G)]\mid
T_{2})^{\ast}\text{.}
\]

\end{enumerate}
\end{corollary}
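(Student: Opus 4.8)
The plan is to handle the ``matroid-theoretic'' equivalences $2\Leftrightarrow 3\Leftrightarrow 4$ for a fixed pair of transversals and to use the corollary following Proposition~\ref{basis} to tie these to the graph-theoretic condition~1. The starting observation is that for \emph{any} pair of disjoint transversals $T_1,T_2$ of $W(G)$ one has $r(T_1\cup T_2)=|V(G)|$: applying Proposition~\ref{basis} with $J=\emptyset$ produces a transversal basis of $M[IAS(G)]$ inside $T_1\cup T_2$, so $r(T_1\cup T_2)\geq|V(G)|$, while the reverse inequality holds because $r(M[IAS(G)])=|V(G)|$. Since the restriction of a matroid to a ground set partitioned as $T_1\sqcup T_2$ is the direct sum of the two pieces exactly when rank is additive, this gives $2\Leftrightarrow 3$ at once, both being equivalent to $r(T_1)+r(T_2)=|V(G)|$.

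For the core of the argument I would reduce to a normal form. Given disjoint transversals $T_1,T_2$, the corollary following Proposition~\ref{basis} provides a locally equivalent $H$ and an induced isomorphism $\beta$ with $\beta(T_1\cup T_2)=\Phi(H)\cup X(H)$. Set $V_1=\{v\in V(H):\phi_H(v)\in\beta(T_1)\}$ and $V_2=V(H)\setminus V_1$; since $T_1,T_2$ are disjoint transversals, each vertex contributes its $\phi$-element to one of $\beta(T_1),\beta(T_2)$ and its $\chi$-element to the other. Ordering the rows of $IAS(H)$ by $V_1$ then $V_2$ and block-decomposing the symmetric matrix $A(H)=\begin{pmatrix}A_{11}&A_{12}\\A_{12}^{T}&A_{22}\end{pmatrix}$, the columns indexed by $\beta(T_1)$ form $\begin{pmatrix}I&A_{12}\\0&A_{22}\end{pmatrix}$ and those indexed by $\beta(T_2)$ form $\begin{pmatrix}A_{11}&0\\A_{12}^{T}&I\end{pmatrix}$, whence $r(\beta(T_1))+r(\beta(T_2))=|V(H)|+\operatorname{rank}(A_{11})+\operatorname{rank}(A_{22})$. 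As $\beta$ preserves ranks, condition~2 holds for $T_1,T_2$ if and only if $A_{11}=A_{22}=0$, that is, if and only if $H$ is a loopless bipartite graph with parts $V_1,V_2$. This proves $2\Rightarrow 1$; for $1\Rightarrow 2$ I reverse the construction, starting from a bipartite $H$ locally equivalent to $G$ (so $A_{11}=A_{22}=0$ for its bipartition $V_1,V_2$), defining $S_1,S_2$ by the same $\phi/\chi$ split, reading $r(S_1)+r(S_2)=|V(H)|$ off the displayed matrices, and pulling $S_1,S_2$ back through $\beta^{-1}$ to disjoint transversals of $W(G)$.

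It remains to pin condition~4 to the same pair $T_1,T_2$. The implication $4\Rightarrow 2$ is immediate from rank duality: if $M[IAS(G)]\!\mid\! T_1\cong(M[IAS(G)]\!\mid\! T_2)^{\ast}$ then $r(T_1)=|T_2|-r(T_2)=|V(G)|-r(T_2)$, giving condition~2. For $2\Rightarrow 4$ I pass to the bipartite normal form above; with $A_{11}=A_{22}=0$ the two transverse matroids are represented, after deleting zero rows, by $\begin{pmatrix}I&A_{12}\end{pmatrix}$ and $\begin{pmatrix}A_{12}^{T}&I\end{pmatrix}$, which are standard representations of dual matroids by \cite[Theorem~2.2.8]{O} (over $GF(2)$ one has $-A_{12}^{T}=A_{12}^{T}$). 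Transporting this duality back along $\beta$ yields condition~4 for the original pair $T_1,T_2$.

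The main obstacle here is bookkeeping rather than depth: correctly tracking the ``mixed'' assignment of $\phi$- and $\chi$-elements to $\beta(T_1)$ and $\beta(T_2)$ through the reduction, and checking that the block matrices above deliver exactly the claimed ranks and the claimed dual. Once those are in hand, everything reduces to Proposition~\ref{basis}, the corollary following it, Corollary~\ref{induced}, and the standard rank/duality facts for standard representations recorded earlier.
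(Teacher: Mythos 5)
Your proof is correct, and although it draws on the same toolbox as the paper's argument (Proposition~\ref{basis}, normalization by local equivalence, block-matrix representations of transverse matroids, and the standard representation of the dual from \cite[Theorem 2.2.8]{O}), it assembles the pieces along a genuinely different route. The paper proves $2\Rightarrow 1$ by extracting a transverse basis $B\subseteq T_1\cup T_2$ from Proposition~\ref{basis}, normalizing only $B$ to $\Phi(H)$ via Proposition~\ref{allphi}, and then showing each part $V_i$ is stable by a span argument (every element of $\beta(T_i\setminus B_i)$ lies in the span of $\beta(B_i)\subseteq\Phi(H)$, so its column vanishes on the rows of the other part); it then derives conditions 3 and 4 from condition 1 by exhibiting the relevant transversals of a bipartite graph explicitly, and closes the cycle with $3\Rightarrow2$ and $4\Rightarrow2$. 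You instead normalize \emph{both} transversals simultaneously to $\Phi(H)\cup X(H)$ using the unnamed corollary that follows Proposition~\ref{basis}, which yields the identity $r(T_1)+r(T_2)=|V(H)|+\operatorname{rank}(A_{11})+\operatorname{rank}(A_{22})$ and lets you read conditions 1, 2 and 4 off a single block decomposition of $A(H)$. Your observation that $r(T_1\cup T_2)=|V(G)|$ for \emph{every} pair of disjoint transversals (Proposition~\ref{basis} with $J=\emptyset$) also gives $2\Leftrightarrow3$ immediately from rank additivity of direct sums, whereas the paper obtains $2\Rightarrow3$ only by passing through condition 1. The trade-off is mild: your version is more uniform and makes the quantitative obstruction to bipartiteness explicit as $\operatorname{rank}(A_{11})+\operatorname{rank}(A_{22})$, at the cost of invoking the stronger normalization corollary (itself proved from Proposition~\ref{allphi} plus loop complementations), while the paper's hard direction gets by with Proposition~\ref{allphi} alone.
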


\begin{proof}
If $G$ is a bipartite graph with vertex-classes $V_{1}$ and $V_{2}$, let
$T_{1}$ and $T_{2}$ be the transversals of $W(G)$ given by
\[
T_{i}=\{\phi_{G}(v)\mid v\in V_{i}\}\cup
\bigcup\limits_{v\notin V_{i}}
\zeta_{G}(v)\text{ .}
\]
Let $M_1=M[IAS(G)]\mid T_{1}$, $M_2=M[IAS(G)]\mid T_{2}$ and $M_{12}=M[IAS(G)]\mid(T_{1}\cup T_{2})$. Then there is a matrix $A$ such that $M_1$ and $M_2$ are represented (respectively) by 
\[
\bordermatrix{
& V_1 & V_2 \cr
V_1 & I_1 & A \cr
V_2 & 0 & 0 \cr
}
\qquad \text{and} \qquad 
\bordermatrix{
& V_1 & V_2 \cr
V_1 & 0 & 0 \cr
V_2 & A^{\text{T}} & I_2 \cr
}
\text{,}
\]
where $I_{1}$ and $I_{2}$ are identity matrices. It follows that $M_1 \cong (M_{2})^{\ast}$~\cite[Theorem 2.2.8]{O}. Also,
$M_{12}$ is represented by 
\[
\bordermatrix{
& V_1 & V_2 & V_1 & V_2 \cr
V_1 & I_1 & A & 0 & 0 \cr
V_2 & 0 & 0 & A^{\text{T}} & I_2 \cr
}\text{.}
\]
As no row of this matrix has a nonzero entry in a column corresponding to an element of $T_{1}$ and also a nonzero entry in a column corresponding to an element of $T_{2}$, $M_{12}$ is the direct sum of $M_{1}$ and $M_{2}$. 

To verify the implications $1\Rightarrow3$ and $1\Rightarrow4$, note that if $G$ is locally
equivalent to a bipartite graph $H$ then as was just observed, $H$ has a pair
of transversals that satisfy conditions 3 and 4. The images of these
transversals under an induced isomorphism $M[IAS(H)] \to M[IAS(G)]$ are transversals of $G$ that satisfy conditions 3 and 4.

The implication $4\Rightarrow2$ is obvious. To verify the implication $3\Rightarrow2$, note that if $T_{1}$ and $T_{2}$
satisfy condition 3 then $r(T_{1})+r(T_{2})$ is the rank of $M[IAS(G)]\mid
(T_{1}\cup T_{2})$, which is $\left\vert V(G)\right\vert $ by Proposition~\ref{basis}.

It remains to verify the implication $2\Rightarrow1$. Suppose that $G$ has a pair of disjoint transversals with $r(T_{1})+r(T_{2})=\left\vert
V(G)\right\vert $. By Proposition \ref{basis}, $M[IAS(G)]$ has a transverse
basis $B\subseteq T_{1}\cup T_{2}$. Then $B_{1}=B\cap T_{1}$ and $B_{2}=B\cap T_{2}$ are both independent sets of $M[IAS(G)]$; as their cardinalities sum to
$r(T_{1})+r(T_{2})$, each $B_{i}$ must be a maximal independent subset of
$T_{i}$. By Proposition \ref{allphi}, there is a graph $H$ that is locally
equivalent to $G$, such that an induced isomorphism $\beta
:M[IAS(G)]\rightarrow M[IAS(H)]$ has $\beta(B)=\Phi(H)$. For $i\in\{1,2\}$ let
$V_{i}=\{v\in V(H)\mid\phi_{H}(v)\in\beta(B_{i})\}$.

As $\beta(B_{1})\subseteq\Phi(H)$, no column of $IAS(H)$ with a nonzero entry
in a row corresponding to a vertex outside $V_{1}$ is in the span of the
columns corresponding to elements of $\beta(B_{1})$. As $r(\beta
(T_{1}))=\left\vert B_{1}\right\vert $, every column corresponding to an
element of $\beta(T_{1}-B_{1})$ is in the span of the columns corresponding to
elements of $\beta(B_{1})$; consequently no element of $\beta(T_{1}-B_{1}) $
corresponds to a column that includes a nonzero entry in a row corresponding
to an element of $V_{2}$, so no two elements of $V_{2} $ are neighbors in $H$.
The same argument applies if we reverse the roles of $B_{1} $ and $B_{2}$, so
$H$ is a bipartite graph.
\end{proof}

For instance, the graph of Figure \ref{circh1f2} might at first glance seem to
resemble the wheel graph $W_{5}$. But in fact, it is quite different. A
computer search indicates that the smallest rank of a transversal of $W_{5}$
is 4, but the pictured graph has two disjoint transversals of rank 3. We leave finding them as an exercise for the reader. Here's a hint: local
complementations at the degree-2 vertices produce a bipartite graph.%

\begin{figure}[ptb]
\centering
\includegraphics[
trim=4.013889in 8.303725in 2.810742in 1.210616in,
natheight=11.005600in,
natwidth=8.496800in,
height=1.1459in,
width=1.2825in
]%
{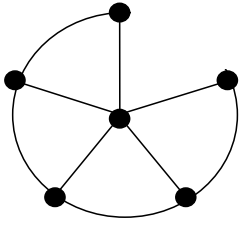}
%
\caption{A graph locally equivalent to a bipartite graph.}
\label{circh1f2}
\end{figure}

Corollary~\ref{bip} has an interesting consequence, having as a special case a result regarding bicycle spaces of planar graphs~\cite[Theorem 17.3.5]{GR}. The connection with planar graphs arises from the fact that medial graphs of planar graphs are associated with bipartite circle graphs; see the sequel to the present paper~\cite{BT2} for details. 

\begin{corollary}
\label{bic}
Suppose $T_1$ and $T_2$ are disjoint transversals of $G$, which satisfy Corollary~\ref{bip}. Let $T_3=W(G)\setminus (T_{1} \cup T_{2})$, and for $1 \leq i \leq 3$ let $M_i$ be the matroid on $V(G)$ defined by $M[IAS(G)]\mid T_{i}$, using the obvious bijection between $T_i$ and $V(G)$. Then $M_1$ and $M_2$ have the same bicycle space, which equals the cycle space of $M_3$.
\end{corollary}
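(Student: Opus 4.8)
The plan is to reduce to a bipartite normal form and then compute the three spaces explicitly. Throughout, for a binary matroid $N$ given by a $GF(2)$-matrix $R$ whose columns index the ground set, recall that the cycle space $\mathcal{C}(N)$ is $\ker R$, the cocycle space $\mathcal{C}^{*}(N)$ is the row space of $R$ (the orthogonal complement of $\ker R$), and the bicycle space is $\mathcal{C}(N)\cap\mathcal{C}^{*}(N)$. All three subspaces are carried to the corresponding subspaces by any matroid isomorphism, via the induced coordinate permutation. An induced isomorphism $\beta:M[IAS(G)]\to M[IAS(H)]$ arising from a local equivalence restricts to isomorphisms $M[IAS(G)]\mid T_{i}\to M[IAS(H)]\mid\beta(T_{i})$, and its associated vertex bijection $V(G)\to V(H)$ identifies the ``obvious'' bijections $T_{i}\leftrightarrow V(G)$ and $\beta(T_{i})\leftrightarrow V(H)$. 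Hence the asserted identity of subspaces of $GF(2)^{V(G)}$ holds for $(G,T_{1},T_{2},T_{3})$ if and only if it holds for $(H,\beta(T_{1}),\beta(T_{2}),\beta(T_{3}))$. So first I would apply the corollary preceding Corollary~\ref{bip} to replace $G$ by a locally equivalent graph for which $\beta(T_{1}\cup T_{2})=\Phi\cup X$; relabelling, this lets me assume $T_{3}=\Psi(G)$, while the hypothesis $r(T_{1})+r(T_{2})=|V(G)|$ is preserved.

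With $T_{3}=\Psi(G)$, write $R_{i}$ for the $V(G)\times V(G)$ matrix whose columns are the columns of $IAS(G)$ indexed by $T_{i}$; since the three columns of a vertex triple sum to $0$ we have $R_{1}+R_{2}+R_{3}=0$, and $R_{3}=A+I$. Each of $T_{1},T_{2}$ carries either $\phi_{G}(v)$ or $\chi_{G}(v)$ for each $v$; let $D$ be the set of $v$ with $\chi_{G}(v)\in T_{1}$. Ordering the coordinates by $\bar D$ then $D$ and writing $A$ in the corresponding $2\times2$ block form, $R_{1}$ and $R_{2}$ become block matrices whose ranks are $|\bar D|+\operatorname{rank}A_{DD}$ and $|D|+\operatorname{rank}A_{\bar D\bar D}$. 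The key structural step is that the hypothesis $r(T_{1})+r(T_{2})=|V(G)|$ therefore forces $A_{DD}=A_{\bar D\bar D}=0$; that is, $G$ is bipartite with classes $V_{1}=\bar D$, $V_{2}=D$, and $T_{1},T_{2}$ are exactly the natural transversals $\{\phi_{G}(v):v\in V_{i}\}\cup\{\chi_{G}(v):v\notin V_{i}\}$ used in the proof of Corollary~\ref{bip}. This is the step I expect to be the main obstacle: one must see that the bipartite hypothesis, phrased as a rank equality, is precisely what collapses the two diagonal blocks and puts us in normal form, and one must keep careful track of the identifications so that the reduction is faithful at the level of subspaces of $GF(2)^{V(G)}$.

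In this normal form everything is explicit. With $B$ the $V_{1}\times V_{2}$ biadjacency matrix, $A=\left(\begin{smallmatrix}0&B\\ B^{\mathrm{T}}&0\end{smallmatrix}\right)$, the three matrices are $R_{1}=\left(\begin{smallmatrix}I&B\\ 0&0\end{smallmatrix}\right)$, $R_{2}=\left(\begin{smallmatrix}0&0\\ B^{\mathrm{T}}&I\end{smallmatrix}\right)$, and $R_{3}=A+I$. A direct computation gives $\mathcal{C}(M_{3})=\ker R_{3}=\{(Bx,x):(I+B^{\mathrm{T}}B)x=0\}$, while $\ker R_{1}$ and the row space of $R_{1}$ are $\{(Bx,x)\}$ and $\{(\lambda,B^{\mathrm{T}}\lambda)\}$, and similarly for $R_{2}$; intersecting kernel with row space in each case returns exactly $\{(Bx,x):(I+B^{\mathrm{T}}B)x=0\}$. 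Hence $\mathcal{B}(M_{1})=\mathcal{B}(M_{2})=\mathcal{C}(M_{3})$, which is the assertion. Conceptually the computation records two facts worth isolating: the row space of $R_{1}$ equals $\ker R_{2}$, so $M_{1}$ and $M_{2}$ are mutually dual as matroids on $V(G)$ (strengthening condition~4 of Corollary~\ref{bip} to an equality), whence $\mathcal{B}(M_{1})=\mathcal{C}(M_{1})\cap\mathcal{C}(M_{2})=\mathcal{B}(M_{2})$; and $R_{1}+R_{2}=R_{3}$ together with the direct-sum condition~3, which prevents $R_{1}x$ and $R_{2}x$ from cancelling unless both vanish, gives $\ker R_{3}=\ker R_{1}\cap\ker R_{2}=\mathcal{C}(M_{1})\cap\mathcal{C}(M_{2})$.
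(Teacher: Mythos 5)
Your proposal is correct, and its core is exactly the paper's proof: in the bipartite normal form the three representing matrices are $\bigl(\begin{smallmatrix}I&B\\0&0\end{smallmatrix}\bigr)$, $\bigl(\begin{smallmatrix}0&0\\B^{\mathrm{T}}&I\end{smallmatrix}\bigr)$, $\bigl(\begin{smallmatrix}I&B\\B^{\mathrm{T}}&I\end{smallmatrix}\bigr)$, the cycle spaces of $M_1$ and $M_2$ are mutual orthogonal complements, and $\ker R_3=\ker R_1\cap\ker R_2$, so the cycle space of $M_3$ is the common bicycle space. The only difference is that you spell out the reduction to this normal form (via the corollary preceding Corollary~\ref{bip} and the observation that $r(T_1)+r(T_2)=|V(G)|$ forces the diagonal blocks $A_{DD}$ and $A_{\bar{D}\bar{D}}$ to vanish), a step the paper leaves implicit by writing its matrices directly for a bipartite $G$ with the natural transversals; making that normalization and the accompanying coordinate identifications explicit is a worthwhile addition rather than a different route.
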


\begin{proof}
$M_1$, $M_2$ and $M_3$ are represented by three matrices
\[
A_1=\bordermatrix{
& V_1 & V_2 \cr
V_1 & I_1 & A \cr
V_2 & 0 & 0 \cr
}
\text{,} \qquad 
A_2=\bordermatrix{
& V_1 & V_2 \cr
V_1 & 0 & 0 \cr
V_2 & A^{\text{T}} & I_2 \cr
}
\qquad \text{and} \qquad 
A_3=\bordermatrix{
& V_1 & V_2 \cr
V_1 & I_1 & A \cr
V_2 & A^{\text{T}} & I_2 \cr
}
\text{,}
\]
respectively. For $1 \leq i \leq 3$ let $Z_i$ be the cycle space of $M_i$, i.e., the orthogonal complement of the row space of $A_i$. Clearly then $Z_3$=$Z_1 \cap Z_2$. Moreover, $Z_1$ and $Z_2$ are orthogonal complements of each other~\cite[Proposition 2.2.23]{O}, so $Z_1 \cap Z_2$ is the bicycle space of both $M_1$ and $M_2$.
\end{proof}

While transverse matroids of isotropic matroids are (of course) binary, Corollary~\ref{bic} extends to quaternary matroids by generalizing the notion of an isotropic matroid in a suitable way from $GF(2)$ to $GF(4)$; details are provided in~\cite[Section~3]{B} (see also \cite{BH4}, formulated there in terms of delta-matroids). 

It is also worth mentioning that the converse of Corollary~\ref{bic} does not hold. That is, the condition ``$G$ has pairwise disjoint transversals $T_1,T_2,T_3$ such that $M_1$ and $M_2$ have the same bicycle space, which equals the cycle space of $M_3$'' is not sufficient to guarantee that $G$ satisfies Corollary~\ref{bip}. For instance, $M[IAS(C_5)]$ has many sets of three pairwise disjoint transversal bases; in the notation of Section 3, one such triple includes $T_1=\{\phi_1,\phi_2,\phi_3,\phi_4,\psi_5\}$, $T_2=\{\chi_1,\chi_2,\psi_3,\chi_4,\chi_5\}$ and $T_3=\{\psi_1,\psi_2,\chi_3,\psi_4,\phi_5\}$. Any such transversal bases satisfy the condition quoted above, because the cycle and bicycle spaces of $M_1,M_2$ and $M_3$ are all $\{0\}$. But inspecting the matrix $IAS(C_5)$ displayed in Section 3, we see that no two columns are the same; as $GF(2)^2$ has only four elements, it follows that there is no transversal of rank $\leq 2$.  Consequently $C_5$ does not satisfy Corollary~\ref{bip}.

\section{Neighborhood circuits and transverse circuits}

Theorem \ref{trancirc} of the introduction follows immediately from Corollary \ref{nullity}, with $\nu=1$. Corollary \ref{nullity} is also useful when $\nu>1$. For instance, the following four results indicate that we can use transverse circuits to detect certain types of vertex pairs in locally equivalent graphs.

\begin{corollary}
\label{circuits1}Suppose $G$ is a looped simple graph, and $k_{1},k_{2}%
\in\mathbb{N}$. Then these statements are equivalent.

\begin{enumerate}
\item $G$ is locally equivalent to some graph $H$ with nonadjacent vertices of
degrees $k_{1}-1$ and $k_{2}-1$, which do not share any neighbor.

\item $G$ has a transverse matroid with two disjoint circuits of sizes $k_{1}
$ and $k_{2}$, whose union contains no other circuit.
\end{enumerate}
\end{corollary}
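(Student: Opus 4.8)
The plan is to apply Corollary~\ref{nullity} with $\nu=2$, using the dictionary between neighborhood circuits and vertex data: a neighborhood circuit $\zeta_H(x)$ has size $\deg_H(x)+1$, two neighborhood circuits $\zeta_H(x_1),\zeta_H(x_2)$ are disjoint exactly when $x_1,x_2$ share no neighbor, and a stable set records nonadjacency. Thus I expect statement~1 to correspond to a stable pair $\{x_1,x_2\}$ of $H$ whose neighborhood circuits are two disjoint circuits of sizes $k_1,k_2$, and statement~2 to be the transported version of this circuit configuration in $M[IAS(G)]$.

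For $1\Rightarrow 2$, I would set $X=\{x_1,x_2\}$ and take the transversal
\[
T=\{\phi_H(v)\mid v\in V(H)-X\}\cup\zeta_H(x_1)\cup\zeta_H(x_2)
\]
as in Corollary~\ref{nullity}. Because $X$ is stable and $x_1,x_2$ share no neighbor, the transverse matroid $M[IAS(H)]\mid T$ is represented by the block matrix displayed in the proof of Corollary~\ref{nullity}, in which the two columns indexed by $X$ have disjoint supports $N(x_1),N(x_2)$ among the $N(X)$ rows while the remaining columns form an identity. Reading off the fundamental circuits of those two columns shows that $\zeta_H(x_1)$ and $\zeta_H(x_2)$ are disjoint circuits of sizes $\deg_H(x_1)+1=k_1$ and $\deg_H(x_2)+1=k_2$, and that the only circuits contained in $\zeta_H(x_1)\cup\zeta_H(x_2)$ are these two (any dependent subset using both of the $X$-columns must contain all the $\phi$-columns of $N(X)$, hence properly contains a neighborhood circuit). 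Transporting through a compatible isomorphism $\beta^{-1}:M[IAS(H)]\to M[IAS(G)]$ (Corollary~\ref{induced}) sends $T$ to a transversal of $W(G)$ and the two neighborhood circuits to two disjoint circuits of sizes $k_1,k_2$ whose union contains no other circuit, which is statement~2.

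For $2\Rightarrow 1$, I would start from the transverse matroid $M[IAS(G)]\mid T$ with disjoint circuits $C_1,C_2$ of sizes $k_1,k_2$ and no other circuit in their union, and set $S=C_1\cup C_2$. Over $GF(2)$ the cycle space of $M[IAS(G)]\mid S$ is spanned by the circuits it contains, namely $C_1$ and $C_2$; since these are disjoint and nonempty, their indicator vectors are independent, so the nullity of $S$ equals $2$. Corollary~\ref{nullity} with $\nu=2$ then yields a locally equivalent $H$, a stable set $X=\{x_1,x_2\}$, and an induced isomorphism $\beta$ with $\zeta_H(x_1)\cup\zeta_H(x_2)\subseteq\beta(S)$. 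Pulling back, $\beta^{-1}(\zeta_H(x_1))$ and $\beta^{-1}(\zeta_H(x_2))$ are circuits contained in $S$; as $x_1\neq x_2$ forces $\zeta_H(x_1)\neq\zeta_H(x_2)$, these are two distinct circuits in $S$, and the no-other-circuit hypothesis forces them to be exactly $C_1$ and $C_2$. Hence $\zeta_H(x_1),\zeta_H(x_2)$ are disjoint of sizes $k_1,k_2$, giving $\deg_H(x_i)=k_i-1$, no shared neighbor (disjointness), and nonadjacency (stability), which is statement~1.

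The main obstacle is the reverse direction: I must convert the purely matroidal hypothesis ``no other circuit in $C_1\cup C_2$'' into the two graph-theoretic conditions ``no shared neighbor'' and ``the correct degrees.'' The crux is that Corollary~\ref{nullity} only guarantees the inclusion $\bigcup_{x\in X}\zeta_H(x)\subseteq\beta(S)$, so I must argue that the neighborhood circuits it produces are forced to coincide with $C_1,C_2$ rather than being smaller circuits sitting inside $S$; this is exactly where the hypothesis that $C_1\cup C_2$ contains no circuit other than $C_1,C_2$ is used, and it simultaneously delivers disjointness (hence no shared neighbor) and the size equalities $|\zeta_H(x_i)|=k_i$.
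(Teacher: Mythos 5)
Your proposal is correct and follows essentially the same route as the paper: for $1\Rightarrow 2$ you exhibit the transversal $\{\phi_H(v)\mid v\notin X\}\cup\zeta_H(x_1)\cup\zeta_H(x_2)$ and pull it back through a compatible isomorphism, and for $2\Rightarrow 1$ you observe that $S=C_1\cup C_2$ is a subtransversal of nullity $2$ and invoke Corollary~\ref{nullity}. You merely supply more detail than the paper does at the two terse spots (verifying that the displayed transversal contains only the two neighborhood circuits, and that the pulled-back neighborhood circuits must coincide with $C_1,C_2$), and both verifications are sound.
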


\begin{proof}
Suppose $G$ satisfies condition 1, and let $v\,$\ and $w$ be vertices of $H$
as described. Then
\[
\zeta_{H}(v)\cup\zeta_{H}(w)\cup\{\phi_{H}(x)\mid x\in V(H)-\{v,w\}\}
\]
is a transversal of $W(H)$ that contains only two circuits, $\zeta_{H}(v)$ and
$\zeta_{H}(w)$. The inverse image of this transversal under an induced
isomorphism $\beta:M[IAS(G)]\rightarrow M[IAS(H)]$ satisfies condition 2.

For the converse, let $S$ be the union of the two circuits mentioned in
condition 2. Then $S$ is a subtransversal whose nullity is 2. Corollary
\ref{nullity} tells us that there is a graph $H$ that is locally equivalent to
$G$, such that the images of the two circuits mentioned in condition 2 under
an induced isomorphism $M[IAS(G)]\rightarrow M[IAS(H)]$ are both neighborhood circuits.
\end{proof}

\begin{corollary}
\label{diam}These two statements about a looped simple graph $G$ are equivalent.

\begin{enumerate}
\item $G$ is locally equivalent to a graph of diameter $>2$.

\item $G$ has a transverse matroid with two disjoint circuits, whose union
contains no other circuit.
\end{enumerate}
\end{corollary}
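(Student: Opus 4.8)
The plan is to deduce Corollary~\ref{diam} directly from Corollary~\ref{circuits1}, by recognizing that its two conditions are simply the existentially quantified (``unsized'') versions of the conditions appearing in Corollary~\ref{circuits1}.

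First I would record the elementary graph-theoretic fact that a looped simple graph $H$ has diameter greater than $2$ if and only if it contains two distinct vertices $v,w$ whose distance exceeds $2$; and that two distinct vertices lie at distance greater than $2$ precisely when they are nonadjacent and share no common neighbor. (This accounts also for the disconnected case, where the distance between vertices in different components is infinite, hence greater than $2$.) Loops are irrelevant here, since distances are measured through open neighborhoods of distinct vertices.

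With this observation in hand, the two implications are immediate. For $1\Rightarrow2$, I would start from a graph $H$ locally equivalent to $G$ with diameter greater than $2$, choose a pair $v,w$ of nonadjacent vertices of $H$ with no common neighbor, and set $k_{1}=\deg_{H}(v)+1$ and $k_{2}=\deg_{H}(w)+1$. Then $H$ satisfies condition~1 of Corollary~\ref{circuits1} for these values of $k_{1},k_{2}$, so that corollary supplies a transverse matroid of $G$ with two disjoint circuits of sizes $k_{1},k_{2}$ whose union contains no other circuit; forgetting the sizes yields condition~2 of Corollary~\ref{diam}. For $2\Rightarrow1$, I would take the two disjoint circuits guaranteed by condition~2, let $k_{1},k_{2}$ be their sizes, and apply Corollary~\ref{circuits1} in the reverse direction to obtain a graph $H$ locally equivalent to $G$ having nonadjacent vertices of degrees $k_{1}-1,k_{2}-1$ with no common neighbor; by the observation above, these vertices lie at distance greater than $2$, so $H$ has diameter greater than $2$.

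Since the entire combinatorial content is already packaged in Corollary~\ref{circuits1}, I do not expect any genuine obstacle. The only point requiring care is the translation between the two formulations: verifying that ``diameter greater than $2$'' is equivalent to the existence of a nonadjacent pair with no common neighbor, and confirming that discarding the degree data $k_{1},k_{2}$ in Corollary~\ref{circuits1} leaves exactly the statement of Corollary~\ref{diam}.
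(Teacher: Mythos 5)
Your proposal is correct and follows exactly the paper's own argument: the paper also derives Corollary~\ref{diam} immediately from Corollary~\ref{circuits1} via the observation that a graph has diameter $>2$ if and only if it has two nonadjacent vertices sharing no common neighbor. Your write-up just spells out the translation in more detail.
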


\begin{proof}
This result follows immediately from Corollary \ref{circuits1}, as a graph has
diameter $>2$ if and only if it has a pair of nonadjacent vertices which do
not share any neighbor.
\end{proof}

\begin{corollary}
\label{circuits2}Suppose $G$ is a looped simple graph, and $k_{1},k_{2}%
\in\mathbb{N}$. Then these statements are equivalent.

\begin{enumerate}
\item $G$ is locally equivalent to some graph $H$ with nonadjacent vertices of
degrees $k_{1}-1$ and $k_{2}-1$, which share a neighbor.

\item $G$ has a transverse matroid of nullity 2, with distinct, intersecting
circuits of sizes $k_{1}$ and $k_{2}$.
\end{enumerate}
\end{corollary}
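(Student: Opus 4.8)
The plan is to follow the structure of the proof of Corollary~\ref{circuits1}, modifying it to account for the fact that here the two circuits \emph{meet} rather than being disjoint. As in that proof, the implication $1\Rightarrow 2$ builds an explicit transversal out of two neighborhood circuits, and the implication $2\Rightarrow 1$ feeds the union of the two given circuits into Corollary~\ref{nullity} (equivalently, Theorem~\ref{stable}).

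For $1\Rightarrow 2$, suppose $G$ is locally equivalent to $H$, and $v,w$ are nonadjacent vertices of $H$ of degrees $k_{1}-1$ and $k_{2}-1$ sharing a neighbor. First I would form the transversal
\[
T=\zeta_{H}(v)\cup\zeta_{H}(w)\cup\{\phi_{H}(x)\mid x\in V(H)-\{v,w\}\}
\]
of $W(H)$; the hypothesis that $v$ and $w$ are nonadjacent is exactly what makes this set meet each vertex triple once. A short rank computation shows that $M[IAS(H)]\mid T$ has nullity $2$: every column has support in the rows $V(H)-\{v,w\}$, and the $\phi_{H}(x)$ with $x\neq v,w$ already span that coordinate subspace, so the rank is $|V(H)|-2$. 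Its circuits $\zeta_{H}(v)$ and $\zeta_{H}(w)$ have sizes $k_{1}$ and $k_{2}$; they are distinct because the non-$\phi$ element at $v$ lies in only one of them, and they intersect in the elements $\phi_{H}(u)$ indexed by the common neighbors $u$ of $v$ and $w$. Pulling $T$ back along an induced isomorphism $M[IAS(G)]\to M[IAS(H)]$ then yields condition $2$.

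For $2\Rightarrow 1$, write $S=C_{1}\cup C_{2}$. Since $C_{1}$ and $C_{2}$ are distinct circuits, their incidence vectors are independent in the cycle space, so $S$ has nullity at least $2$; since $S$ lies inside a transverse matroid of nullity $2$, its nullity is exactly $2$. The subtle point, and the main obstacle, is to arrange that it is $C_{1}$ and $C_{2}$ \emph{themselves}, rather than the third circuit $C_{1}\Delta C_{2}$ that the intersecting case produces, which become the neighborhood circuits of the resulting stable set; the extra hypothesis of Corollary~\ref{circuits1} (``union contains no other circuit'') is precisely what is missing here. I would resolve this by controlling the independent set used in Theorem~\ref{stable}. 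Because distinct circuits are incomparable, I can pick $e_{1}\in C_{1}\setminus C_{2}$ and $e_{2}\in C_{2}\setminus C_{1}$; then $J=S\setminus\{e_{1},e_{2}\}$ is a basis of $M[IAS(G)]\mid S$, and the fundamental circuit of $e_{i}$ with respect to $J$ is exactly $C_{i}$, since $J\cup\{e_{i}\}$ contains $C_{i}$ but neither of the other two circuits $C_{1}\Delta C_{2}$ and $C_{3-i}$.

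Extending $J$ to a basis $B$ of a transverse matroid containing $S$ and applying Theorem~\ref{stable}, I obtain a locally equivalent $H$ and an induced isomorphism $\beta$ carrying $B$ into $\Phi(H)$, with $\beta(V(G)-V_{B})$ a stable set. The elements $e_{1},e_{2}$ sit at two vertices $v_{1},v_{2}$, distinct because $e_{1}\neq e_{2}$ lie in a single transversal, whose images lie in this stable set; and the fundamental-circuit correspondence behind Corollary~\ref{allphibasis} identifies $\beta(C_{i})=C(\beta(e_{i}),\beta(B))$ with the neighborhood circuit $\zeta_{H}(\beta(v_{i}))$, using that all neighbors of a stable vertex correspond to elements of $B$. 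Thus $\beta(v_{1}),\beta(v_{2})$ are nonadjacent vertices of degrees $k_{1}-1$ and $k_{2}-1$, and they share a neighbor because $\zeta_{H}(\beta(v_{1}))\cap\zeta_{H}(\beta(v_{2}))=\beta(C_{1}\cap C_{2})\neq\emptyset$, and the intersection of two neighborhood circuits at distinct vertices consists of the $\phi_{H}$ elements of their common neighbors. I expect the bookkeeping in this last step, verifying that the chosen fundamental circuits really are the neighborhood circuits of the two stable vertices, to be where the care is needed, but the mechanism is exactly the one already used in Corollary~\ref{allphibasis}.
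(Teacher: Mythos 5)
Your proposal is correct and follows essentially the same route as the paper's proof: the forward direction builds the same transversal $\zeta_{H}(v)\cup\zeta_{H}(w)\cup\{\phi_{H}(x)\mid x\in V(H)-\{v,w\}\}$ from the two neighborhood circuits, and the reverse direction forms the same independent set $J$ by deleting one element of each of $C_{1}\setminus C_{2}$ and $C_{2}\setminus C_{1}$ and then feeds it into Theorem~\ref{stable} via the construction in Corollary~\ref{nullity}. The only cosmetic difference is that the paper checks that $C_{1}\Delta C_{2}$ is a circuit (so that $J$ is indeed independent) by an explicit case analysis of excluded elements, whereas you deduce that the only circuits inside $S$ are $C_{1}$, $C_{2}$, $C_{1}\Delta C_{2}$ from the nullity-$2$ count of the cycle space of $S$; both arguments are fine.
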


\begin{proof}
Let $v\,$\ and $w$ be vertices of a graph $H$ that is locally equivalent to
$G$, as described in condition 1. Then the inverse image of
\[
\zeta_{H}(v)\cup\zeta_{H}(w)\cup\{\phi_{H}(x)\mid x\in V(H)-\{v,w\}\}
\]
under an induced isomorphism $\beta:M[IAS(G)]\rightarrow M[IAS(H)]$ is a
transverse matroid of $G$, which satisfies condition 2.

For the converse, let $M$ be a transverse matroid of $G$ of nullity 2, and
suppose $\gamma_{1}$ and $\gamma_{2}$ are distinct, intersecting circuits of
$M$ with $\left\vert \gamma_{1}\right\vert =k_{1}$ and $\left\vert \gamma
_{2}\right\vert =k_{2}$. The columns of $IAS(G)$ corresponding to elements of
$\gamma_{1}$ sum to 0, and so do the columns corresponding to elements of
$\gamma_{2}$. Consequently the columns of $IAS(G)$ corresponding to elements
of $\gamma_{1}\Delta\gamma_{2}$ also sum to 0. If $M$ were to have a circuit
$\gamma_{3}\subsetneqq\gamma_{1}\Delta\gamma_{2}$, then it would also have a
circuit $\gamma_{4}\subseteq(\gamma_{1}\Delta\gamma_{2})-\gamma_{3}$, because
the columns of $IAS(G)$ corresponding to elements of $(\gamma_{1}\Delta
\gamma_{2})-\gamma_{3}$ would sum to 0. Then an independent set $J$ of $M$
would have to exclude an element $x$ of $\gamma_{3}$ and an element $y$ of
$\gamma_{4}$, and at least one more element: if $x,y\in$ $\gamma_{1}%
-\gamma_{2}$ then $J$ would have to exclude an element $z$ of $\gamma_{2}$, if
$x,y\in\gamma_{2}-\gamma_{1}$ then $J$ would have to exclude an element $z$ of
$\gamma_{1}$, and if $x\in$ $\gamma_{1}-\gamma_{2}$ and $y\in$ $\gamma
_{2}-\gamma_{1}$ then $J$ would have to exclude some element $z$ of
$\gamma_{1}\cup\gamma_{3}-\{x\}$, as the circuit elimination property
guarantees that $\gamma_{1}\cup\gamma_{3}-\{x\}$ is dependent. As the nullity
of $M$ is only 2, we conclude by contradiction that $\gamma_{1}\Delta
\gamma_{2}$ is a circuit of $M$.

Let $J$ be a subset of $M$ obtained by removing one element of $\gamma_{1}-\gamma_{2}$ and also removing one element of $\gamma_{2}-\gamma_{1}$. Then $J $ is an independent set of $M[IAS(G)]$. Applying the last paragraph of the proof of Corollary \ref{nullity} to $J$, we conclude that there is a graph $H$ locally equivalent to $G$, such that the images of $\gamma_{1}$ and $\gamma_{2}$ under an induced isomorphism $M[IAS(G)]\rightarrow M[IAS(H)]$ are both neighborhood circuits.
\end{proof}

\begin{corollary}
\label{circuits3}Let $G$ be a looped simple graph, and let $k_{1},k_{2}%
\in\mathbb{N}$. Then these statements are equivalent.

\begin{enumerate}
\item $G$ is locally equivalent to a graph with adjacent vertices of degrees
$k_{1}-1$ and $k_{2}-1$.

\item $M[IAS(G)]$ has two transverse circuits $\gamma_{1}$ and $\gamma_{2}$
such that $\left\vert \gamma_{1}\right\vert =k_{1}$, $\left\vert \gamma
_{2}\right\vert =k_{2}$, the largest subtransversals contained in $\gamma
_{1}\cup\gamma_{2}$ are of size $\left\vert \gamma_{1}\cup\gamma
_{2}\right\vert -2$, and two of these largest subtransversals are independent
sets of $M[IAS(G)]$.
\end{enumerate}
\end{corollary}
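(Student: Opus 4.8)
The plan is to prove the two implications separately, treating $1\Rightarrow 2$ as the routine direction and $2\Rightarrow 1$ as the substantive one. Throughout I would fix an induced isomorphism $\beta\colon M[IAS(G)]\to M[IAS(H)]$ and transfer statements between $G$ and $H$ freely, since $\beta$ preserves cardinalities, the partition into vertex triples, circuits, independence, and unions.

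For $1\Rightarrow 2$, suppose $H$ is locally equivalent to $G$ with adjacent vertices $v,w$ of degrees $k_1-1,k_2-1$, and set $\gamma_1=\beta^{-1}(\zeta_H(v))$ and $\gamma_2=\beta^{-1}(\zeta_H(w))$; it then suffices to check the three requirements for $\zeta_H(v),\zeta_H(w)$ inside $H$. The sizes are immediate from $|\zeta_H(v)|=\deg_H(v)+1$. For the union structure, the point I would stress is that because $v\sim w$ we have $\phi_H(w)\in\zeta_H(v)$ and $\phi_H(v)\in\zeta_H(w)$, so $\tau_H(v)$ and $\tau_H(w)$ are the only triples meeting $\zeta_H(v)\cup\zeta_H(w)$ in two elements (every other triple contributes at most the single element $\phi_H(u)$); hence the largest subtransversals of the union have size $|\zeta_H(v)\cup\zeta_H(w)|-2$. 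Finally, among the four largest subtransversals, the one obtained by discarding the two neighbourhood columns is a set of $\phi_H$ columns and so is independent, while the one obtained by discarding $\phi_H(v),\phi_H(w)$ is independent by a short row–cancellation argument (the equations in rows $v$ and $w$ force the coefficients of the two neighbourhood columns to vanish). This produces the two independent largest subtransversals required by condition 2.

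For $2\Rightarrow 1$, I would begin from the fact that each $\gamma_i$ is a subtransversal, so $\gamma_1\cup\gamma_2$ meets every triple in at most two elements; condition (b) then forces exactly two triples $\omega_1=\tau_G(p)$ and $\omega_2=\tau_G(q)$ to contain two elements, say $\gamma_1\cap\omega_i=\{a_i\}$ and $\gamma_2\cap\omega_i=\{b_i\}$ with $a_i\neq b_i$. Next I would observe that, of the four largest subtransversals of $\gamma_1\cup\gamma_2$, the two ``unmixed'' ones contain $\gamma_1$ or $\gamma_2$ and are therefore dependent; hence condition (c) forces the two ``mixed'' ones to be the independent ones, and in particular $S^{*}=(\gamma_1\cup\gamma_2)\setminus\{a_1,b_2\}$ is an independent subtransversal. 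The plan is then to extend $S^{*}$ to a transversal basis $B$ of $M[IAS(G)]$ — possible because $S^{*}$ is an independent subtransversal and the bases of the nondegenerate multimatroid $\mathcal{Z}(Q)$ are transversals of cardinality $|V(G)|=r(M)$ — and to invoke the special case of Proposition~\ref{allphi} recorded in Corollary~\ref{allphibasis} to obtain $H$ locally equivalent to $G$ together with an induced $\beta$ satisfying $\beta(B)=\Phi(H)$.

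The remaining work, and what I expect to be the main obstacle, is reading the graph structure of $H$ off of $\beta$. Since $\gamma_1\setminus\{a_1\}\subseteq S^{*}\subseteq B$ and $\gamma_2\setminus\{b_2\}\subseteq S^{*}\subseteq B$, the images $\beta(\gamma_1\setminus\{a_1\})$ and $\beta(\gamma_2\setminus\{b_2\})$ consist of $\phi_H$ columns, so $\beta(a_1)$ equals the sum of those standard basis columns and lies in $\tau_H(v)\setminus\Phi(H)=\{\chi_H(v),\psi_H(v)\}$, where $v=\beta(p)$. Because $\phi_H(v)=\beta(b_1)\notin\beta(\gamma_1)$, the support of $\beta(a_1)$ excludes $v$; a little loop-status bookkeeping (unlooped selects $\chi_H(v)$, looped selects $\psi_H(v)$, and in either case the selected column has support exactly $N_H(v)$) then identifies $\beta(a_1)$ as precisely the neighbourhood column of $v$, whence $\beta(\gamma_1)=\zeta_H(v)$ and $\deg_H(v)=k_1-1$. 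Symmetrically $\beta(\gamma_2)=\zeta_H(w)$ with $w=\beta(q)$ and $\deg_H(w)=k_2-1$, and $p\neq q$ gives $v\neq w$. Adjacency is the last step: since $a_2\in\gamma_1$ and $a_2\in B$, we obtain $\phi_H(w)=\beta(a_2)\in\beta(\gamma_1)=\zeta_H(v)$, i.e.\ $w\in N_H(v)$, so $v$ and $w$ are adjacent with the required degrees. I anticipate that keeping straight which of $\chi_H,\psi_H$ is the neighbourhood element and confirming adjacency through $\beta(a_2)$ will be the only genuinely delicate part; the rest runs parallel to the proofs of Corollaries~\ref{circuits1} and~\ref{circuits2}.
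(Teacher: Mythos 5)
Your proof is correct, and its overall skeleton matches the paper's: the forward direction is the same computation with the neighborhood circuits $\zeta_H(v),\zeta_H(w)$ (noting $\phi_H(w)\in\zeta_H(v)$ and $\phi_H(v)\in\zeta_H(w)$, and exhibiting the all-$\phi$ and the two-neighborhood-column subtransversals as the independent ones), and the converse hinges, exactly as in the paper, on Proposition~\ref{allphi} (via Corollary~\ref{allphibasis} in your version) applied to an independent maximal subtransversal of $\gamma_1\cup\gamma_2$, followed by the zero-diagonal argument identifying $\beta(\gamma_1),\beta(\gamma_2)$ as neighborhood circuits. The one place you genuinely diverge is the adjacency step at the end of the converse. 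The paper extracts adjacency from the \emph{second} independent maximal subtransversal, arguing that it must omit both $\phi_H(v_1)$ and $\phi_H(v_2)$ and would be dependent if the two vertices were nonadjacent. You instead read adjacency directly off your explicit bookkeeping of the two doubled triples: since $\omega_2$ meets $\gamma_1$ in $a_2$ and $a_2\in B$, you get $\phi_H(w)=\beta(a_2)\in\beta(\gamma_1)=\zeta_H(v)$, hence $w\in N_H(v)$. This is a little more direct than the paper's final sentence, and it has the pleasant side effect that your converse uses only \emph{one} independent largest subtransversal: the observation that the two ``unmixed'' maximal subtransversals contain $\gamma_1$ or $\gamma_2$ and are therefore dependent already forces any independent one to be mixed, which is all your argument needs. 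The second independent subtransversal is then only required to establish the forward implication (i.e., it is part of what condition 2 asserts), whereas the paper's converse leans on both. The remaining steps you flag as delicate (the loop-status bookkeeping and the extension of $S^{*}$ to a transversal basis via the nondegenerate $3$-matroid) are handled correctly.
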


\begin{proof}
Suppose $G$ is locally equivalent to a graph $H$ with adjacent vertices
$v_{1}$ and $v_{2}$, of degrees $k_{1}-1$ and $k_{2}-1$. Then the neighborhood
circuits $\zeta_{H}(v_{1})$ and $\zeta_{H}(v_{2})$ are transverse circuits of
$H$ such that $\left\vert \zeta_{H}(v_{1})\right\vert =k_{1}$ and $\left\vert
\zeta_{H}(v_{2})\right\vert =k_{2}$. As $\phi_{H}(v_{1})\in\zeta_{H}(v_{2})$
and $\phi_{H}(v_{2})\in\zeta_{H}(v_{1})$, the largest subtransversals
contained in $\zeta_{H}(v_{1})\cup\zeta_{H}(v_{2})$ are of size $\left\vert
\zeta_{H}(v_{1})\cup\zeta_{H}(v_{2})\right\vert -2$. One independent
subtransversal of maximum size contains only $\phi_{H}$ elements, and the
other includes one of $\chi_{H}(v_{1}),\psi_{H}(v_{1})$ and one of $\chi
_{H}(v_{2}),\psi_{H}(v_{2})$, along with every $\phi_{H}(w)$ such that
$w\in(N_{H}(v_{1})\cup N_{H}(v_{2}))-\{v_{1},v_{2}\}$. The inverse images of
$\zeta_{H}(v_{1})$ and $\zeta_{H}(v_{2})$ under an induced isomorphism
$M[IAS(G)]\rightarrow M[IAS(H)]$ are transverse circuits of $G$ that satisfy
the requirements of the statement.

Conversely, suppose $G$ has transverse circuits $\gamma_{1}$ and $\gamma_{2}$
as in the statement, and let $S\subseteq\gamma_{1}\cup\gamma_{2}$ be an
independent subtransversal of size $\left\vert \gamma_{1}\cup\gamma
_{2}\right\vert -2$. As $S$ is independent, it does not contain any circuit;
hence $S$ must exclude at least one element of $\gamma_{1}$ and at least one
element of $\gamma_{2}$. Proposition \ref{allphi} tells us that there is a
locally equivalent graph $H$ such that the image of $S$ under an induced
isomorphism $M[IAS(G)]\rightarrow M[IAS(H)]$ contains only $\phi_{H}$
elements. The images of the two elements of $\gamma_{1}\cup\gamma_{2}-S$ must
correspond to columns of $IAS(H)$ with diagonal entries equal to 0, as the
images of $\gamma_{1}$ and $\gamma_{2}$ are dependent. It follows that the
images of $\gamma_{1}$ and $\gamma_{2}$ are neighborhood circuits of vertices
$v_{1}$ and $v_{2}$ of degrees $\left\vert \gamma_{1}\right\vert -1$ and
$\left\vert \gamma_{2}\right\vert -1$, respectively. The second independent
subtransversal of size $\left\vert S\right\vert $ must exclude both $\phi
_{H}(v_{1})$ and $\phi_{H}(v_{2})$, for if it were to contain either of them it would
contain $\zeta_{H}(v_{1})$ or $\zeta_{H}(v_{2})$, and consequently it
would be dependent. This second subtransversal would not be independent if
$v_{1}$ and $v_{2}$ were not adjacent.
\end{proof}

\section{An example}
\label{wheel}

Corollaries~\ref{nullnu} and~\ref{circk} can be used to provide particularly simple descriptions of some local equivalence classes. For instance, a search using the matroid module for Sage \cite{sageMatroid, sage} indicates that if a graph $G$ of order $\leq 6$  has no transverse circuit of size $<4$, then $G$ is locally equivalent to the wheel graph $W_{5}$. The local equivalence class of $W_5$ is also characterized by the relatively small nullities of its transverse matroids (the largest nullity is 2). These observations yield several characterizations of this local equivalence class:

\begin{proposition}
\label{w5}Let $G$ be a looped simple graph with $n\leq6$ vertices. Then any
one of the following properties implies the others.

\begin{enumerate}
\item $G$ is locally equivalent to the wheel graph $W_{5}$.

\item $G$ is not locally equivalent to any graph with a vertex of degree
$\leq2$.

\item $G$ is not locally equivalent to any graph with a stable set of size
$\geq n-3$.

\item $G$ has no transverse circuit of size $\leq3$.

\item $G$ has no transverse matroid of rank $\leq3$.
\end{enumerate}
\end{proposition}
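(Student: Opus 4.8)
The plan is to split the five properties into two groups, each governed by one of the corollaries of the introduction, and then to anchor both groups to the first property by a finite computation over graphs of order at most $6$. First I would record that all five properties depend only on the local equivalence class of $G$ up to isomorphism: the first three refer to that class by definition, while by Theorem~\ref{theory} a local equivalence up to isomorphism induces an isomorphism $M[IAS(G)]\to M[IAS(H)]$ under which vertex triples and transverse circuits correspond, so the set of transverse circuit sizes (fourth property) and the set of transverse matroid ranks (fifth property) are class invariants as well. Consequently it suffices to verify the equivalences on a single representative of each local equivalence class.

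Next I would establish the two ``free'' equivalences in general. By Corollary~\ref{circk}, $G$ has a transverse circuit of size $k$ precisely when some graph locally equivalent to $G$ has a vertex of degree $k-1$; letting $k$ range over $\{1,2,3\}$ shows that $G$ has a transverse circuit of size $\le 3$ if and only if $G$ is locally equivalent to a graph with a vertex of degree $\le 2$, and negating both sides gives the equivalence of the fourth and second properties. For the other pair, a transverse matroid is the restriction of $M[IAS(G)]$ to a transversal, so its ground set has size $n$ and its rank $r$ and nullity $\nu$ satisfy $r+\nu=n$; thus a transverse matroid of rank $\le 3$ is exactly one of nullity $\ge n-3$. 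Corollary~\ref{nullnu} then identifies the existence of such a transverse matroid with the existence of a graph locally equivalent to $G$ that has a stable set of size $\ge n-3$, and negating gives the equivalence of the fifth and third properties. (When $n\le 3$ all five properties are false and the statement holds vacuously, so I would assume $4\le n\le 6$, where $n-3$ is a positive integer and Corollary~\ref{nullnu} applies as stated.)

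It remains to connect the two groups to the first property. Since local equivalence up to isomorphism preserves the number of vertices, the first property forces $n=6$. Here I would invoke the exhaustive computation with the Sage matroid package~\cite{sageMatroid, sage} over the finitely many local equivalence classes of graphs of order at most $6$ (by the invariance noted above it suffices to treat one representative per class). The search shows that, among these graphs, having no transverse circuit of size $<4$ characterizes exactly the local equivalence class of $W_5$, and likewise having no transverse matroid of rank $\le 3$ characterizes exactly that class. These two statements are precisely the biconditionals linking the fourth property to the first and the fifth property to the first. Assembling everything, the second, fourth, first, fifth, and third properties are all mutually equivalent.

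The main obstacle is exactly this last anchoring step. The equivalences of the fourth property with the second, and of the fifth with the third, are formal consequences of the two corollaries and hold for every looped simple graph; but the ``small circuit'' condition and the ``small nullity'' condition are a priori distinct invariants and need not coincide in general, so their common identification with membership in the $W_5$ class is not a consequence of the general machinery and is where the hypothesis $n\le 6$ is indispensable. I do not expect to avoid the finite case analysis; the role of the general results is to reduce it to computing, for one representative of each order-$\le 6$ class, the minimum transverse circuit size and the maximum transverse matroid nullity, and then reading off that only the class of $W_5$ attains the extremal values $4$ and $2$.
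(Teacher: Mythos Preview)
Your proposal is correct and mirrors the paper's own treatment: the paper does not give a formal proof of Proposition~\ref{w5}, but the surrounding text makes exactly your argument --- the equivalences $2\Leftrightarrow 4$ and $3\Leftrightarrow 5$ come from Corollaries~\ref{circk} and~\ref{nullnu}, and the anchoring of both groups to property~1 is justified by a Sage search over graphs of order~$\le 6$. If anything, your write-up is more explicit than the paper's, since you spell out the invariance under local equivalence, the rank--nullity conversion, and the vacuous small-$n$ cases that the paper leaves implicit.
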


The local equivalence class of $W_{5}$ is important in Bouchet's famous characterization of circle graphs by obstructions \cite{Bco}. In sequels to the present paper \cite{BT2, BT3} we extend Proposition \ref{w5} and provide several new characterizations of circle graphs.

\begin{figure}[ptb]
\centering
\includegraphics[
trim=3.880489in 8.297122in 2.810742in 1.208415in,
natheight=11.005600in,
natwidth=8.496800in,
height=1.1519in,
width=1.382in
]
{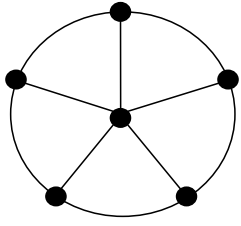}
\caption{The wheel graph $W_{5}$.}
\label{circh1f1}
\end{figure}

\section{Matroid minors and vertex-minors} \label{sec:minors}

Isotropic matroids of graphs constitute a very limited class of binary
matroids. The limitation is clear even if we note only that they are
$3n$-element matroids, as this implies that when a single element is
contracted or deleted from an isotropic matroid, the result cannot be an
isotropic matroid.

There is a special minor operation that is appropriate for isotropic matroids,
which involves removing entire vertex triples.

\begin{definition}
\label{isominor}Let $G$ be a looped simple graph, let $S$ be a subtransversal
of $W(G)$, and let $S^{\prime}$ contain the other $2\left\vert S\right\vert $
elements of $W(G)$ that\ correspond to the same vertices of $G$ as elements of
$S$. Then
\[
(M[IAS(G)]/S)-S^{\prime}%
\]
is the \emph{isotropic minor }of $G$ obtained by contracting $S$ and deleting
$S^{\prime}$.
\end{definition}

Notice that if $S$ is specified then it is not necessary to explicitly mention
$S^{\prime}$, as $S^{\prime}$ is determined by $S$. Consequently we may
sometimes refer simply to the isotropic minor obtained by contracting $S$. By
the way, the definition is consistent with Bouchet's definitions of minors of
isotropic systems \cite{Bi1} and multimatroids \cite{B2}.

\begin{definition}
A \emph{vertex-minor} of a looped simple graph $G$ is a graph obtained from
$G$ through some sequence of local complementations, loop complementations
and vertex deletions.
\end{definition}

\begin{theorem}
\label{isominorspec}(\cite[Section 7.1]{Tnewnew}) The isotropic minors of $G$
are precisely the isotropic matroids of vertex-minors of $G$.
\end{theorem}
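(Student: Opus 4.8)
Theorem~\ref{isominorspec} asserts that the isotropic minors of $G$ are precisely the isotropic matroids of vertex-minors of $G$. Since the result is attributed to \cite[Section 7.1]{Tnewnew}, the plan is to reconstruct the correspondence between the matroid operation ``contract a subtransversal $S$ and delete the companion set $S'$'' and the graph operation ``pass to a vertex-minor.''

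\textbf{Reduction to a single vertex.} The plan is to prove both inclusions by induction on $|S|$, reducing to the case where $S$ is a single element of one vertex triple $\tau_G(v)$. The base case $S=\emptyset$ is trivial, since the isotropic minor is then $M[IAS(G)]$ itself and $G$ is a vertex-minor of itself. For the inductive step I would show that contracting a single element $\phi_G(v)$, $\chi_G(v)$, or $\psi_G(v)$ and deleting the other two elements of $\tau_G(v)$ produces exactly the isotropic matroid $M[IAS(G-v)]$, possibly after first applying one of the induced isomorphisms $\beta_s^v$, $\beta_{ns}^v$, or $\beta_\ell^v$ from Proposition~\ref{iso3}. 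The key observation is that local equivalence does not change the isotropic minor up to isomorphism: if $\beta:M[IAS(G))\to M[IAS(G')]$ is a compatible isomorphism induced by a local complementation, then $\beta$ carries vertex triples to vertex triples, hence carries the minor obtained by contracting $S\subseteq\tau_G(v)$ to the minor obtained by contracting $\beta(S)\subseteq\tau_{G'}(v)$. This lets me normalize so that the single contracted element is $\phi_G(v)$.

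\textbf{The single-element computation.} The heart of the argument is to check that contracting $\phi_G(v)$ and deleting $\chi_G(v),\psi_G(v)$ from $M[IAS(G)]$ yields $M[IAS(G-v)]$. I would do this at the level of the representing matrix. Writing $IAS(G)=\begin{pmatrix} I & A(G) & A(G)+I\end{pmatrix}$ and placing $v$ in a distinguished row and column, contracting the unit column $\phi_G(v)$ amounts to pivoting on the $(v,v)$ entry of the $\phi$-block (which is $1$), then discarding the $v$-row; since that entry sits in an identity block, the pivot introduces no fill, and deleting the remaining $\chi_G(v),\psi_G(v)$ columns simply removes the $v$-column from the $A(G)$ and $A(G)+I$ blocks. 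What survives is exactly $\begin{pmatrix} I & A(G-v) & A(G-v)+I\end{pmatrix}=IAS(G-v)$, because deleting the $v$-row and $v$-column of the adjacency matrix is precisely the graph operation of deleting vertex $v$. This establishes that the single-vertex isotropic minor is $M[IAS(G-v)]$, and $G-v$ is a vertex-minor of $G$.

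\textbf{Assembling the two inclusions.} For the forward inclusion, an arbitrary isotropic minor contracts some subtransversal $S$; after normalizing each contracted element to a $\phi$ via local equivalence (which replaces $G$ by a locally equivalent graph $H$ without changing the isomorphism type of the minor), the computation above applied $|S|$ times shows the minor is $M[IAS(H-V_S)]$, where $V_S$ is the set of vertices corresponding to $S$, and $H-V_S$ is a vertex-minor of $G$. For the reverse inclusion, any vertex-minor of $G$ is obtained by a sequence of local/loop complementations and vertex deletions; each complementation leaves the isotropic matroid isomorphic (Proposition~\ref{iso3}) and each deletion corresponds by the computation above to contracting a $\phi$ and deleting its companions, so the isotropic matroid of the vertex-minor is an isotropic minor of $G$. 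The main obstacle I anticipate is bookkeeping in the normalization step: one must verify that the choice of which induced isomorphism to apply (depending on whether the contracted element is $\phi$, $\chi$, or $\psi$, and on the loop status of $v$) consistently realizes the contraction as a genuine vertex deletion in \emph{some} locally equivalent graph, and that the resulting vertex-minor is well defined up to isomorphism independent of these choices. Making this precise requires careful tracking of how $\beta_s^v,\beta_{ns}^v,\beta_\ell^v$ permute the elements within each triple, which is exactly the content of Proposition~\ref{iso3}.
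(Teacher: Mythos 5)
The paper does not actually prove Theorem~\ref{isominorspec}: it cites \cite[Section 7.1]{Tnewnew} and only records the three single-vertex cases in the paragraph that follows the statement. Your overall strategy --- reduce to contracting one vertex triple at a time, compute the $\phi$-contraction at the level of the matrix $IAS(G)$, and handle the other two elements of a triple by first passing to a locally equivalent graph, then reassemble the two inclusions by induction --- is the right one and is consistent with that discussion. The matrix computation for contracting $\phi_G(v)$ is correct, as is the reverse inclusion once one notes that local complementation at $u\neq v$ commutes with deletion of $v$, so every vertex-minor of $G$ has the form $H-V_0$ with $H$ locally equivalent to $G$.

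The gap is in the normalization step. You claim that contracting $\chi_G(v)$ or $\psi_G(v)$ reduces to the $\phi$ case ``after first applying one of the induced isomorphisms $\beta_s^v$, $\beta_{ns}^v$, or $\beta_\ell^v$,'' i.e., a single operation at $v$. This fails for $\chi_G(v)$ when $v$ is unlooped (equivalently, for $\psi_G(v)$ when $v$ is looped). Track the label under operations at $v$ alone: by Proposition~\ref{iso3}, $\beta_\ell^v$ realizes the transposition $(\chi\,\psi)$ on $\tau_G(v)$ and toggles the loop at $v$, while $\beta_s^v$ and $\beta_{ns}^v$ realize $(\phi\,\psi)$ at an unlooped $v$ and $(\phi\,\chi)$ at a looped $v$. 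Starting from $\chi$ at an unlooped $v$, the only reachable states are ``$\chi$ with $v$ unlooped'' and ``$\psi$ with $v$ looped''; the element never reaches the $\phi$ position, no matter how many operations at $v$ you compose. To normalize this case you must involve a neighbor $w$ of $v$: the edge pivot $((G_s^v)_s^w)_s^v$ carries $\chi_G(v)$ to the $\phi$ position at $v$, at the cost of permuting the labels inside $\tau_G(w)$ --- which is precisely why the paper's commentary after the theorem says only ``is isomorphic to'' (rather than ``is'') in that case. If $v$ is isolated and unlooped there is no such $w$, but then $\chi_G(v)$ is a loop of $M[IAS(G)]$, contraction coincides with deletion, and the minor is $M[IAS(G-v)]$ directly; this needs separate mention. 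With these repairs the argument goes through, but as written the normalization claim is false in one of the three cases and that case is exactly the delicate one.
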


In particular, if $v\in V(G)$ is unlooped and $w\in N_{G}(v)$ then the
isotropic minor of $G$ obtained by contracting $\chi_{G}(v)$ is isomorphic to
$M[IAS(((G_{s}^{v})_{s}^{w})_{s}^{v}-v)]$, the isotropic minor of $G$ obtained
by contracting $\psi_{G}(v)$ is $M[IAS(G_{ns}^{v}-v)]$, and the isotropic
minor of $G$ obtained by contracting $\phi_{G}(v)$ is $M[IAS(G-v)]$. Notice
that we only say \textquotedblleft is isomorphic to\textquotedblright\ in the
first case, because in that case the matroid isomorphism requires a
permutation of the $\phi,\chi,\psi$ labels in the vertex triple $\tau_{G}(w)$.
No such label change is needed in the other two cases. We refer to
\cite{Tnewnew} for details.

In contrast, it turns out that all minors of transverse matroids are
transverse minors, in an appropriate sense.

\begin{proposition}
\label{minors}Let $G$ be a looped simple graph with a transverse matroid $M$.
Then every matroid minor of $M$ is a transverse matroid of some vertex-minor
of $G$.
\end{proposition}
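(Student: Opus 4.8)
The plan is to write an arbitrary matroid minor of $M=M[IAS(G)]\mid T$ as $M/C\setminus D$ with $C$ and $D$ disjoint subsets of the ground set $T$, and to dispose of the contractions and the deletions in turn. First I would handle the contractions one element at a time. Since contraction commutes with restriction, for $t\in T$ we have $M/t=(M[IAS(G)]/t)\mid(T-t)$; and because $T-t$ is disjoint from the other two members of the triple containing $t$, this restriction is unchanged if we first delete those two members. Thus $M/t$ is the restriction to $T-t$ of the isotropic minor obtained by contracting $t$ and deleting the rest of its triple. By Theorem~\ref{isominorspec} that isotropic minor is $M[IAS(G'')]$ for a vertex-minor $G''$ of $G$ (up to a relabeling within a single triple), and $T-t$ is a transversal of $G''$, so $M/t$ is again a transverse matroid of a vertex-minor. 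Iterating over the elements of $C$ then reduces the problem to showing that a pure deletion $M\setminus D$ of a transverse matroid is a transverse matroid of a vertex-minor.

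For the deletion I would use Proposition~\ref{allphi}. Choose a basis $B$ of $M\setminus D=M[IAS(G)]\mid(T-D)$. As $B$ is an independent subtransversal, Proposition~\ref{allphi} provides a locally equivalent graph $H$ and an induced isomorphism $\beta$ with $\beta(B)\subseteq\Phi(H)$; write $\beta(B)=\{\phi_H(w)\mid w\in V_0\}$. Since $\beta(B)$ is a basis of $M[IAS(H)]\mid\beta(T-D)$, every column of $IAS(H)$ indexed by $\beta(T-D)$ lies in the span of the standard basis vectors $\phi_H(w)$, $w\in V_0$, and is therefore supported entirely on the rows indexed by $V_0$. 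Now let $X$ be the set of vertices of $H$ not represented in $\beta(T-D)$ --- equivalently, the $\beta$-images of the vertices whose $T$-representatives were deleted. Then $X$ is disjoint from $V_0$, so none of the surviving columns has a nonzero entry in a row indexed by $X$. Deleting those rows, i.e.\ passing to the vertex-minor $H-X$, therefore leaves the column matroid unchanged, and $\beta(T-D)$ is a full transversal of $H-X$. Hence $M\setminus D\cong M[IAS(H-X)]\mid\beta(T-D)$ is a transverse matroid of the vertex-minor $H-X$, as required.

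The contraction half is essentially bookkeeping on top of Theorem~\ref{isominorspec}; the deletion half is where the real work lies. The crux is the observation that, after using Proposition~\ref{allphi} to turn a basis of the deleted matroid into $\phi$-elements, every surviving column is forced to vanish on the rows indexed by the deleted vertices, which converts the abstract matroid deletion into an honest deletion of vertices from the graph. I expect the main obstacle to be verifying this support claim and checking that the deleted vertices are exactly those omitted from the $\phi$-basis, so that $\beta(T-D)$ is a genuine transversal of $H-X$.
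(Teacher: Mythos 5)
Your proof is correct, and the contraction half is essentially identical to the paper's: contract one element at a time, note that the isotropic minor obtained by contracting $t$ and deleting the rest of its triple restricts to $M/t$ on $T-t$, and invoke Theorem~\ref{isominorspec}. The deletion half, however, takes a genuinely different route. The paper disposes of $M-m$ by quoting the triangle property of (tight) isotropic matroids --- exactly one of $M$, $(M-m)\cup\{m'\}$, $(M-m)\cup\{m''\}$ has the same rank as $M-m$ --- so it can choose a replacement element $m'$ from the same vertex triple with $r((M-m)\cup\{m'\})=r(M-m)+1$; then $m'$ is a coloop of $(M-m)\cup\{m'\}$, hence deleting it equals contracting it, and the deletion case collapses into the contraction case already settled. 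You instead take a basis $B$ of $M\setminus D$, apply Proposition~\ref{allphi} to pass to a locally equivalent $H$ with $\beta(B)\subseteq\Phi(H)$, and argue that every surviving column is supported on the rows indexed by $V_0$, so the vertices whose triples were deleted can literally be removed from $H$ without changing the matroid; your support claim and the disjointness of $X$ from $V_0$ both check out. Each approach buys something: the paper's swap-and-contract trick is shorter and stays entirely in matroid language (at the cost of invoking tightness), while yours handles all deletions at once, works directly from the matrix representation, and exhibits the witnessing vertex-minor $H-X$ and its transversal $\beta(T-D)$ explicitly. The only point worth spelling out in your iteration over $C$ is that the identification furnished by Theorem~\ref{isominorspec} may permute $\phi,\chi,\psi$ labels within a single vertex triple, but since it respects the partition into triples it carries $T-t$ to a transversal, so the induction is legitimate.
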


\begin{proof}
It suffices to verify this for the minors obtained by contracting and deleting
a single element $m$ of $M$. For $M/m$, the result is obvious: $M/m$ is a
transverse matroid of the isotropic minor of $M[IAS(G)]$ obtained by
contracting $m$ and deleting the other two elements of the corresponding
vertex triple.

To realize $M-m$ as a transverse matroid of an isotropic minor of $G$, we
recall the triangle property of isotropic matroids: if $m^{\prime}$ and
$m^{\prime\prime}$ are the other two elements of the vertex triple that
contains $m$, then one of the three transverse matroids $M$, $(M-m)\cup
\{m^{\prime}\}$, $(M-m)\cup\{m^{\prime\prime}\}$ has the same rank as $M-m$,
and the other two are of rank $r(M-m)+1$. In any case we may presume that
$r((M-m)\cup\{m^{\prime}\})=r(M-m)+1$. Then $m^{\prime}$ is a coloop of
$(M-m)\cup\{m^{\prime}\}$, so $M-m=((M-m)\cup\{m^{\prime}\})-m^{\prime}$ is
isomorphic to $((M-m)\cup\{m^{\prime}\})/m^{\prime}$. As observed in the
preceding paragraph, the latter matroid is a transverse matroid of an
isotropic minor of $G$.
\end{proof}

\begin{corollary}
\label{minorclass}Let $\mathcal{M}$ be a class of binary matroids that is
closed under matroid minors, and let $\mathcal{G}_{\mathcal{M}}$ be the family
of looped simple graphs whose transverse matroids are all from $\mathcal{M}$.
Then $\mathcal{G}_{\mathcal{M}}$ is closed under vertex-minors.
\end{corollary}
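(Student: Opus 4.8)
The plan is to prove Corollary~\ref{minorclass} as a direct consequence of Proposition~\ref{minors}, using only elementary set-theoretic manipulation of the two closure conditions. The statement to establish is that $\mathcal{G}_{\mathcal{M}}$ is closed under vertex-minors, where $\mathcal{G}_{\mathcal{M}}$ consists of those looped simple graphs all of whose transverse matroids lie in $\mathcal{M}$.

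First I would reduce to a single-step vertex-minor operation. A vertex-minor of $G$ is obtained by a finite sequence of local complementations, loop complementations, and vertex deletions; since being a vertex-minor is transitive, it suffices to show that if $G\in\mathcal{G}_{\mathcal{M}}$ and $G'$ is obtained from $G$ by one such elementary operation, then $G'\in\mathcal{G}_{\mathcal{M}}$. Local and loop complementations do not change the isotropic matroid up to isomorphism (by Proposition~\ref{iso3}, they induce an isomorphism $M[IAS(G)]\to M[IAS(G')]$ carrying transverse matroids to transverse matroids), so these operations manifestly preserve membership in $\mathcal{G}_{\mathcal{M}}$. The only substantive case is a single vertex deletion.

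So suppose $G' = G - v$ for some vertex $v$, and let $M'$ be an arbitrary transverse matroid of $G'$; I must show $M'\in\mathcal{M}$. The key observation is that $G'$ is itself a vertex-minor of $G$, so by Theorem~\ref{isominorspec} (or directly, since deleting $v$ corresponds to contracting $\phi_G(v)$ in the isotropic minor), $M[IAS(G')]$ is an isotropic minor of $M[IAS(G)]$, and hence every transverse matroid $M'$ of $G'$ is a matroid minor of a transverse matroid $M$ of $G$. Concretely, a transversal $T'$ of $W(G')$ extends to a transversal $T = T'\cup\{\phi_G(v)\}$ of $W(G)$, and $M[IAS(G')]\mid T'$ is obtained from $M[IAS(G)]\mid T$ by contracting the coloop $\phi_G(v)$ (or by whichever minor operation Theorem~\ref{isominorspec} dictates). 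Thus $M'$ is a matroid minor of $M := M[IAS(G)]\mid T$. Since $G\in\mathcal{G}_{\mathcal{M}}$ we have $M\in\mathcal{M}$, and since $\mathcal{M}$ is minor-closed we conclude $M'\in\mathcal{M}$. As $M'$ was an arbitrary transverse matroid of $G'$, this gives $G'\in\mathcal{G}_{\mathcal{M}}$.

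The only real subtlety — and the step I would treat most carefully — is the direction of the logical dependence. Proposition~\ref{minors} is stated as ``every matroid minor of a transverse matroid $M$ of $G$ is a transverse matroid of some vertex-minor of $G$,'' whereas here I need the converse reading: every transverse matroid of the vertex-minor $G-v$ arises as a matroid minor of some transverse matroid of $G$. This converse follows from Theorem~\ref{isominorspec}, which identifies isotropic minors of $G$ with isotropic matroids of vertex-minors, together with the explicit description (contracting $\phi_G(v)$ realizes $M[IAS(G-v)]$). I would make sure to invoke Theorem~\ref{isominorspec} rather than Proposition~\ref{minors} for the vertex-deletion case, reserving the latter for the trivial check that minors stay inside the class. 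With that dependence clarified, the argument is a short combination of transitivity of the vertex-minor relation, invariance of $\mathcal{M}$-membership under local and loop complementation, and one application of minor-closedness of $\mathcal{M}$.
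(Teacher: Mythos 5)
Your proof is correct and fills in exactly the argument the paper leaves implicit: the direction actually needed (every transverse matroid of a vertex-minor of $G$ is a matroid minor of a transverse matroid of $G$) comes from Theorem~\ref{isominorspec} together with the fact that restricting to $T'$ commutes with contracting $\phi_G(v)$, and you are right that Proposition~\ref{minors} as literally stated points the other way, so invoking Theorem~\ref{isominorspec} for the vertex-deletion step is the correct move. One small inaccuracy: $\phi_G(v)$ need not be a coloop of $M[IAS(G)]\mid T$ (for $G=K_2$ on $\{v,w\}$ and $T=\{\phi_G(v),\chi_G(w)\}$ the two columns are parallel), but since your argument contracts $\phi_G(v)$ rather than relying on it being a coloop, this parenthetical slip does not affect the proof.
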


It is regrettable that the most important vertex-minor-closed family of looped
simple graphs, the looped circle graphs, cannot be described in this easy way.
(Looped circle graphs constitute a proper subfamily of $\mathcal{G}%
_{cographic}$.) We discuss this important family in sequels to the present
paper \cite{BT2, BT3}.

\section{Parallel reductions and distance hereditary graphs} \label{sec:par_red_hereditary}

Recall some elementary definitions of matroid theory. A \emph{loop} of a matroid is an element that is excluded from every basis. Two non-loop elements $x$ and $y$ are \emph{parallel} if $\{x,y\}$ is a circuit; equivalently, no basis includes them both. We also consider all loops to be parallel to each other. Dually, a \emph{coloop} is an element that is included in every basis, and we consider all coloops to be \emph{in series} with each other. Two non-coloop elements are in series if no basis excludes them both.

It is a simple matter to recognize loops and parallels in matroids represented by binary matrices: a column represents a loop if all of its entries are 0, and two columns represent parallels if all of their entries are the same. In general it is not quite so easy to recognize coloops and elements in series, but this will not concern us because isotropic matroids have no coloops, and contain no series pairs that are not also parallel:

\begin{proposition}
\label{serieschar}
Let $G$ be a looped simple graph. 
\begin{itemize}
\item No element of $M[IAS(G)]$ is a coloop.

\item Two elements of $M[IAS(G)]$ are in series if and only if they are the parallel, non-loop elements of the vertex triple of an isolated vertex of $G$.
\end{itemize}
\end{proposition}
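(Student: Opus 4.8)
The plan is to hang everything on one elementary observation. Since the three columns of each vertex triple satisfy $\phi_{G}(v)+\chi_{G}(v)+\psi_{G}(v)=0$, for any (row) vector $z\in GF(2)^{V(G)}$ the three inner products $z\cdot\phi_{G}(v)$, $z\cdot\chi_{G}(v)$, $z\cdot\psi_{G}(v)$ sum to $z\cdot 0=0$ in $GF(2)$. Hence an even number of them are nonzero, so the support of the row-space vector $z\,IAS(G)$ meets every vertex triple $\tau_{G}(v)$ in $0$ or $2$ elements. This parity fact drives both bullets.

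For the first bullet, recall that an element $x$ is a coloop exactly when $x\notin\mathrm{span}(W(G)\setminus\{x\})$, equivalently when some row-space vector has support $\{x\}$. Since every support has even cardinality there is no singleton support, so $M[IAS(G)]$ has no coloop. (Equivalently, as $\phi_{G}(v)+\chi_{G}(v)+\psi_{G}(v)=0$, every element of a triple lies in the span of the other two, so deleting a single element never drops the rank.)

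For the second bullet I would first reduce the combinatorial notion to a linear one. Since there are no coloops and $r(M[IAS(G)])=|V(G)|$, deleting one element preserves the rank, so $r(W(G)\setminus\{x,y\})\ge|V(G)|-1$ for every pair. By definition $x$ and $y$ are in series iff no basis avoids both, i.e.\ iff $r(W(G)\setminus\{x,y\})<|V(G)|$; combining, this holds iff $r(W(G)\setminus\{x,y\})=|V(G)|-1$. Passing to the annihilator, this is equivalent to the existence of a nonzero $z\in GF(2)^{V(G)}$ with $\mathrm{support}(z\,IAS(G))\subseteq\{x,y\}$, and by the parity fact such a support must equal $\{x,y\}$. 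It then remains to identify the pairs $\{x,y\}$ arising this way. By parity the support lies in a single triple, so $x,y\in\tau_{G}(v)$ for some $v$; and for every other vertex $w$ the support misses $\tau_{G}(w)$, forcing $z_{w}=0$ and, writing $A=A(G)$, $(Az)_{w}=0$. The first condition makes $z$ the indicator of $v$, and then $(Az)_{w}=A_{wv}$, so the second says $v$ has no neighbor, i.e.\ $v$ is isolated. Evaluating $z_{v}=1$ and $(Az)_{v}=A_{vv}$ at $v$ shows the support is $\{\phi_{G}(v),\psi_{G}(v)\}$ when $v$ is unlooped and $\{\phi_{G}(v),\chi_{G}(v)\}$ when $v$ is looped --- in each case precisely the two non-loop (parallel) elements of $\tau_{G}(v)$ noted in the introduction. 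Conversely, $z=\,$indicator of an isolated $v$ produces exactly these supports, so these are all the series pairs.

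The one step demanding care --- the nearest thing to an obstacle --- is this reduction of ``in series'' to the support condition: one must use the absence of coloops to trap $r(W(G)\setminus\{x,y\})$ between $|V(G)|-1$ and $|V(G)|$, and then recognize the rank-dropping pairs as the supports of row-space vectors. Once that translation is in place, the parity observation makes the remaining case analysis immediate and forces both elements into a common triple belonging to an isolated vertex.
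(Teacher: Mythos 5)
Your proof is correct, but it takes a genuinely different route from the paper's. You work in the cocycle space: since the three columns of each vertex triple sum to zero, every vector $z\,IAS(G)$ in the row space has support meeting each triple $\tau_{G}(v)$ in an even number of elements. You then identify coloops with singleton supports (impossible by parity) and, using the absence of coloops to pin $r(W(G)\setminus\{x,y\})$ at $|V(G)|-1$, identify series pairs with supports equal to $\{x,y\}$; parity forces such a pair into a single triple, and the explicit computation with $z$ the indicator vector of $v$ forces $v$ to be isolated and picks out exactly the two non-loop elements. (Your parity fact is just the dual of the observation, used elsewhere in the paper, that every vertex triple lies in the cycle space; equivalently, you are showing that series pairs are $2$-element cocircuits.) The paper instead argues primally, by explicit basis exchange starting from $\Phi(G)$: for each possible location of $\rho$ and $\sigma$ relative to $\Phi(G)$ and each adjacency configuration it exhibits a basis avoiding both, which requires a multi-case analysis. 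Your approach is shorter and replaces that case analysis by a single linear-algebraic constraint; the paper's stays entirely at the level of bases and columns, which matches the style of the surrounding section. One small point you leave implicit: to conclude the support equals $\{x,y\}$ rather than $\emptyset$ you need $z\neq 0\Rightarrow z\,IAS(G)\neq 0$, which holds because $IAS(G)$ contains an identity block; it is worth saying so explicitly.
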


\begin{proof}
Suppose first that $\rho$ is a coloop of $M[IAS(G)]$. As $\Phi
(G)=\{\phi_{G}(v)\mid v\in V(G)\}$ is a basis of $M[IAS(G)]$, $\rho = \phi_{G}(v)$ for some $v \in V(G)$. Let $\rho^{\prime}$ be the one of $\chi_{G}(v),\psi_{G}(v)$ which corresponds to a column of $IAS(G)$ with a nonzero $v$ entry. Then the symmetric difference $\Phi
(G) \Delta \{\rho,\rho^{\prime}\}$ is a basis of $M[IAS(G)]$, and it does not contain $\rho$. We conclude by contradiction that $\rho$ is not a coloop.

If $v$ is an isolated vertex of $G$ then the columns of $IAS(G)$ representing the two non-loop elements of the vertex triple $\tau_{G}(v)$ are the same, and they
are the only columns of $IAS(G)$ with nonzero entries in the $v$ row. Consequently the
corresponding elements of $M[IAS(G)]$ are parallel, and they are in series.

Now, suppose $\rho$ and $\sigma$ are in series in $M[IAS(G)]$. The basis $\Phi(G)$ must include at least one of $\rho,\sigma$; say $\rho
=\phi_{G}(v)$. We claim that $\sigma \notin \Phi(G)$. Suppose the claim is incorrect, and $\sigma = \phi_{G}(w)$. If $w$ is a neighbor of $v$, choose $\rho^{\prime} \neq \rho \in \tau_{G}(v)$ and $\sigma^{\prime} \neq \sigma \in \tau_{G}(w)$ so that the corresponding columns of $IAS(G)$ have 0 entries in the $v$ and $w$ rows (respectively). Then $B = \Phi(G) \Delta \{\rho,\rho^{\prime},\sigma,\sigma^{\prime}\}$ is a basis of $M[IAS(G)]$, because the columns of $IAS(G)$ corresponding to $\rho^{\prime}$ and $\sigma^{\prime}$ have nonzero entries in the $w$ and $v$ rows (respectively). But $B$ contains neither $\rho$ nor $\sigma$, an impossibility. If $w$ is not a neighbor of $v$, instead, then choose $\rho^{\prime} \neq \rho \in \tau_{G}(v)$ and $\sigma^{\prime} \neq \sigma \in \tau_{G}(w)$ so that the corresponding columns of $IAS(G)$ have nonzero entries in the $v$ and $w$ rows (respectively). Then again, $\Phi(G) \Delta \{\rho,\rho^{\prime},\sigma,\sigma^{\prime}\}$ is a basis of $M[IAS(G)]$ which contains neither $\rho$ nor $\sigma$; and again, this is impossible. We conclude by contradiction that the claim $\sigma \notin \Phi(G)$ must be correct.

If $v$ is not isolated in $G$, let $x$ be a
neighbor of $v$. Then the columns of $IAS(G)$ corresponding to $\chi_{G}(x)$
and $\psi_{G}(x)$ both have nonzero entries in the $v$ row. Choose one of
$\chi_{G}(x)$, $\psi_{G}(x)$ that is not equal to $\sigma$, and denote it
$\rho^{\prime}$. Then $\Phi(G)\Delta\{\rho,\rho^{\prime}\}$ is a basis of
$M[IAS(G)]$ which contains neither $\rho$ nor $\sigma$, an impossibility. We conclude that $v$ must be isolated.

Suppose $\sigma$ is an element of a vertex triple $\tau_{G}(w)$, where
$w\neq v$. Let $\rho^{\prime}$ be the non-loop element of $\tau_{G}(v)$, other
than $\rho$. As $\sigma\neq\phi_{G}(w)$, $\Phi(G)\Delta\{\rho,\rho^{\prime
}\}\,$ is a basis of $M[IAS(G)]$ which excludes both $\rho$ and $\sigma$, an impossibility.

The only remaining possibility is that $\sigma$ is the non-loop element of $\tau_{G}(v)$ other than $\rho$. As noted in the second paragraph of the proof, it follows that $\rho$ and $\sigma$ are both parallel and in series.
\end{proof}

In contrast, there are several kinds of parallels in isotropic matroids. 

\begin{proposition}
\label{loopchar}
\label{loops} Let $G$ be a looped simple graph. An element of $M[IAS(G)]$ is a loop if it is the $\chi_{G}$ element of an isolated unlooped vertex, or the $\psi_{G}$ element of an isolated looped vertex.
\end{proposition}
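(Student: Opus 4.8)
The plan is to verify the claim directly from the structure of the matrix $IAS(G)$, using the elementary criterion that a column of a binary matrix represents a loop of the column matroid if and only if all of its entries are $0$. Recall that the $\chi_{G}(v)$ column of $IAS(G)$ is the $v$ column of $A(G)$, and the $\psi_{G}(v)$ column is the $v$ column of $A(G)+I$. The off-diagonal entries of these columns record adjacencies of $v$, while the diagonal ($v$) entry records the loop status of $v$ in the $\chi$ case and its complement in the $\psi$ case.

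First I would treat the unlooped isolated vertex. If $v$ is isolated and unlooped, then $v$ has no neighbors, so every off-diagonal entry of the $v$ column of $A(G)$ is $0$; moreover the diagonal entry is $0$ because $v$ is unlooped. Hence the entire $\chi_{G}(v)$ column of $IAS(G)$ is zero, so $\chi_{G}(v)$ is a loop of $M[IAS(G)]$. Second, for the looped isolated vertex, I would observe that again all off-diagonal entries of the $v$ column of $A(G)$ are $0$ since $v$ has no neighbors, while the diagonal entry equals $1$; therefore the $v$ column of $A(G)+I$ is zero, so $\psi_{G}(v)$ is a loop of $M[IAS(G)]$. Both halves of the statement follow immediately.

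There is essentially no obstacle here: the proposition asserts only a sufficient condition (note the \emph{if}, not \emph{if and only if}), so I need only exhibit that in each listed case the relevant column is identically zero, which is immediate from the definition of $A(G)$. I would not attempt to characterize all loops, since that is not claimed; indeed the converse and the remaining cases are addressed elsewhere, so a short two-case verification suffices.

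\begin{proof}
Recall that a column of a binary matrix represents a loop of its column matroid exactly when all of its entries are $0$. Suppose $v$ is an isolated unlooped vertex of $G$. Since $v$ has no neighbors, every off-diagonal entry of the $v$ column of $A(G)$ is $0$, and since $v$ is unlooped its diagonal entry is also $0$; hence the $\chi_{G}(v)$ column of $IAS(G)$ is zero, so $\chi_{G}(v)$ is a loop of $M[IAS(G)]$. Now suppose $v$ is an isolated looped vertex. Again $v$ has no neighbors, so all off-diagonal entries of the $v$ column of $A(G)$ are $0$, while the diagonal entry is $1$; therefore the $v$ column of $A(G)+I$ is zero, which is the $\psi_{G}(v)$ column of $IAS(G)$, so $\psi_{G}(v)$ is a loop of $M[IAS(G)]$.
\end{proof}
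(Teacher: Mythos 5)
Your proof is correct and follows the same route as the paper, which simply invokes the criterion that an element of $M[IAS(G)]$ is a loop if and only if the corresponding column of $IAS(G)$ is zero; you have merely written out the two-case column verification that the paper leaves implicit.
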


\begin{proof}
An element of $M[IAS(G)]$ is a loop if and only if every entry of the corresponding column of $IAS(G)$ is $0$.
\end{proof}

\begin{proposition}
\label{parallels} Let $G$ be a looped simple graph. Two non-loop elements of $M[IAS(G)]$ are parallel if and only if they fall into one of these four categories:

\begin{enumerate}
\item If $v\in V(G)$ is isolated then the two non-loop elements of the vertex triple $\tau_{G}(v)$ are parallel.

\item If $v\neq w\in V(G)$ and $N_{G}(v)=N_{G}(w)\neq\emptyset$ then the
vertex triples $\tau_{G}(v)$ and $\tau_{G}(w)$ contain a parallel pair, which
includes the two elements whose corresponding columns have $0$ in both the $v$
row and the $w$ row.

\item If $v\neq w\in V(G)$ and $N_{G}(v)\cup\{v\}=N_{G}(w)\cup\{w\}$ then
$\tau_{G}(v)$ and $\tau_{G}(w)$ contain a parallel pair, which includes the
two elements whose corresponding columns have $1$ in both the $v$ row and the
$w$ row.

\item If $v\neq w\in V(G)$ and $N_{G}(v)=\{w\}$ then $\tau_{G}(v)$ and
$\tau_{G}(w)$ contain a parallel pair, which includes $\phi_{G}(w)$ and the
element of $\tau_{G}(v)$ whose corresponding column has $0$ in the $v$ row.
\end{enumerate}
\end{proposition}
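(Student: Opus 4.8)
The plan is to reduce everything to the elementary fact, already noted before Proposition~\ref{serieschar}, that two non-loop elements of $M[IAS(G)]$ are parallel precisely when the corresponding columns of $IAS(G)$ are equal over $GF(2)$, i.e.\ have the same support. So first I would record the supports of the three column types: $\phi_G(v)$ has support $\{v\}$; $\chi_G(v)$ has support $N_G(v)$ together with $v$ itself exactly when $v$ is looped; and $\psi_G(v)$ has support $N_G(v)$ together with $v$ itself exactly when $v$ is unlooped. In particular, in row $v$ exactly one of $\chi_G(v),\psi_G(v)$ carries a $1$, the choice being governed by the loop status of $v$. This single remark is what ties the bookkeeping in the four categories (``$0$ in row $v$'', ``$1$ in row $v$'') to loop status, and it will be used repeatedly.

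For the (if) direction I would verify that each of the four listed configurations produces two columns of equal support. For category~1, an isolated vertex $v$ turns one of $\chi_G(v),\psi_G(v)$ into the zero column (a loop), while the other acquires support $\{v\}=\mathrm{supp}(\phi_G(v))$. For category~2, the non-adjacency of $v$ and $w$ makes every column of $\tau_G(v)$ vanish in row $w$ and vice versa, so the two ``$0$-in-own-row'' elements both have support $N_G(v)=N_G(w)$. Category~3 is the same computation carried out with the ``$1$-in-own-row'' elements and the common closed neighborhood $N_G(v)\cup\{v\}=N_G(w)\cup\{w\}$. Category~4 is immediate: if $N_G(v)=\{w\}$ then the element of $\tau_G(v)$ with a $0$ in row $v$ has support $N_G(v)=\{w\}=\mathrm{supp}(\phi_G(w))$.

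The substance is the (only if) direction. Suppose columns $c_v\in\tau_G(v)$ and $c_w\in\tau_G(w)$ have equal nonempty support $S$. I would split first on whether $v=w$. If the two elements lie in a single triple $\tau_G(v)$, I note that $\chi_G(v)$ and $\psi_G(v)$ always disagree in row $v$ and so can never be parallel; hence one of the pair is $\phi_G(v)$, forcing $S=\{v\}$, hence $N_G(v)=\emptyset$, which is category~1. If $v\neq w$, I read off the entries in rows $v$ and $w$. When one of $c_v,c_w$ is a $\phi$ column, say $c_v=\phi_G(v)$, then $S=\{v\}$ forces the $w$-column to have support $\{v\}$; since $w\notin\{v\}$ this can only happen if $N_G(w)=\{v\}$ and the diagonal of that column is absent, which is exactly category~4. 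When neither is a $\phi$ column, equality of $S$ on the rows outside $\{v,w\}$ gives $N_G(v)\setminus\{w\}=N_G(w)\setminus\{v\}$; I then branch on adjacency. The row-$v$ and row-$w$ comparisons show that $v$ and $w$ are adjacent if and only if both columns carry their own diagonal $1$, and combining with the preceding identity yields $N_G(v)=N_G(w)\neq\emptyset$ in the non-adjacent case (category~2) and $N_G(v)\cup\{v\}=N_G(w)\cup\{w\}$ in the adjacent case (category~3).

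The main obstacle I anticipate is purely organizational: keeping the loop-status bookkeeping consistent across the branches, since whether the diagonal $1$ sits in the $\chi$ column or in the $\psi$ column depends on the loop status of the relevant vertex, whereas the four categories are phrased in terms of which entries of rows $v$ and $w$ are $0$ or $1$ rather than in terms of $\chi$ versus $\psi$. Once the support table of the first paragraph is in hand, each branch is a short comparison of row $v$, row $w$, and the remaining rows, so no genuinely hard step remains.
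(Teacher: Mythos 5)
Your proposal is correct and follows essentially the same route as the paper, which proves the proposition in one line by observing that two non-loop elements are parallel exactly when the corresponding columns of $IAS(G)$ are equal and nonzero; you simply spell out the support bookkeeping and case analysis that the paper leaves to the reader. All branches check out, including the reduction of the single-triple case to category~1 and the diagonal-entry argument distinguishing categories~2 and~3.
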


\begin{proof}
Two non-loop elements of $M[IAS(G)]$ are parallel if and only if the
corresponding columns of $IAS(G)$ have the same entries, at least one of which
is not 0.
\end{proof}

Notice that in case 1, the two non-loop elements of $\tau_{G}(v)$ correspond to the only two columns of $IAS(G)$ with nonzero entries in the $v$ row. Consequently these two elements constitute a component of $M[IAS(G)]$.

Vertex pairs of the types mentioned in cases 2 and 3 are \emph{nonadjacent
twins} and \emph{adjacent twins}, respectively. In case 4, $v$ is
\emph{pendant} on $w$. Notice that if $v$ and $w$ fall under case 2 or case 3
in $G$, then they fall under under case 4 in a graph locally equivalent to
$G$. For if $v$ and $w$ are adjacent twins in $G$, then $v$ is pendant on $w$
in $G_{s}^{w}$ and $G_{ns}^{w}$; and if $v$ and $w$ are nonadjacent twins with
a common neighbor $x$ in $G$, then $v$ and $w$ are adjacent twins in
$G_{s}^{x}$ and $G_{ns}^{x}$. Also, if $N_{G}(v)\cup\{v\}=N_{G}(w)\cup
\{w\}=\{v,w\}$ then cases 3 and 4 both apply.

\begin{corollary}
\label{parallelcor}If $\rho$ and $\sigma$ are parallel non-loop elements of
$M[IAS(G)]$ then one of these cases holds.

\begin{enumerate}
\item A single vertex triple contains both $\rho$ and $\sigma$. Moreover, the
submatroid $M[IAS(G)]\mid\{\rho,\sigma\}$ is a component of $M[IAS(G)]$.

\item Two distinct vertex triples $\{\rho,\rho^{\prime},\rho^{\prime\prime}\}$
and $\{\sigma,\sigma^{\prime},\sigma^{\prime\prime}\}$ contain $\rho$ and
$\sigma$. Moreover, (a) there is a compatible automorphism of $M[IAS(G)]$ that
interchanges $\rho$ and $\sigma$, interchanges $\{\rho,\rho^{\prime}%
,\rho^{\prime\prime}\}$ and $\{\sigma,\sigma^{\prime},\sigma^{\prime\prime}%
\}$, and preserves all other vertex triples and (b) there is a compatible
automorphism of $M[IAS(G)]$ that preserves all vertex triples, fixes $\rho$
and $\sigma$, interchanges $\rho^{\prime}$ and $\rho^{\prime\prime}$, and
interchanges $\sigma^{\prime}$ and $\sigma^{\prime\prime}$.
\end{enumerate}
\end{corollary}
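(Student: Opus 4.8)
The plan is to read off the two cases directly from the classification in Proposition~\ref{parallels}. That proposition lists every way two non-loop elements of $M[IAS(G)]$ can be parallel, and only its first category places both elements in a single vertex triple; the remaining three place them in two distinct triples. So the dichotomy of the corollary is exactly category~1 $\Rightarrow$ Case~1 and categories 2--4 $\Rightarrow$ Case~2. For Case~1, $\rho,\sigma$ are the two non-loop elements of $\tau_G(v)$ for an isolated vertex $v$, and the remark immediately following Proposition~\ref{parallels} already records that these correspond to the only two columns of $IAS(G)$ with a nonzero entry in row $v$ (and those columns are zero in every other row); hence $M[IAS(G)]\mid\{\rho,\sigma\}$ is a component of $M[IAS(G)]$, which is all Case~1 asserts.

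It remains to produce the two compatible automorphisms in Case~2, where $\rho\in\tau_G(v)$ and $\sigma\in\tau_G(w)$ for distinct vertices $v,w$. My tools are the induced isomorphisms $\beta_\ell^u,\beta_s^u,\beta_{ns}^u$ of Proposition~\ref{iso3}; the compatible automorphisms induced by graph automorphisms of $G$ (which relabel $W(G)$ by a vertex permutation, preserving $\phi/\chi/\psi$ types, and are compatible by the discussion around Corollary~\ref{induced}); and conjugation. The point of conjugation is that if $\beta\colon M[IAS(G)]\to M[IAS(G^{*})]$ is a compatible isomorphism arising from local and loop complementations (so its vertex bijection is the identity of the common vertex set), and $\alpha^{*}$ is a compatible automorphism of $M[IAS(G^{*})]$, then $\beta^{-1}\alpha^{*}\beta$ is a compatible automorphism of $M[IAS(G)]$ with the corresponding action on triples and on $\rho,\sigma$. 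So the strategy is to pass, by local complementations, to a graph in which the required automorphism is transparent, build it there, and conjugate back.

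For automorphism (a) I will use the observations following Proposition~\ref{parallels}: in each of categories 2--4 there is a graph $G^{*}$, reached from $G$ by a single simple or non-simple local complement (at a common neighbor of $v$ and $w$ in the nonadjacent-twin case, at $w$ in the pendant case, and $G^{*}=G$ in the adjacent-twin case), in which $v$ and $w$ are adjacent twins; a further loop complementation at $v$ or $w$ if needed makes their loop statuses agree without disturbing the twin relation. In that graph the transposition $(v\,w)$ is a graph automorphism, whose induced compatible automorphism interchanges $\tau(v)$ and $\tau(w)$ type-for-type while fixing every other triple; since $v$ and $w$ then have equal loop status, the parallel elements of $\tau(v)$ and $\tau(w)$ have the same $\phi/\chi/\psi$ type, so this automorphism interchanges the images of $\rho$ and $\sigma$. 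Conjugating back yields (a). For automorphism (b), whose vertex bijection must be the identity, I will instead reduce to the pendant configuration ($v$ pendant on $w$, and, after a loop complementation, $v$ unlooped). There the composite $\beta_{ns}^{v}\circ\beta_\ell^{w}$ returns the graph to itself — the two operations toggle the loop at $w$ in opposite directions and neither alters any adjacency — while interchanging $\chi(w)\leftrightarrow\psi(w)$ and $\phi(v)\leftrightarrow\psi(v)$ and fixing $\phi(w),\chi(v)$ and every other element. Since $\rho,\sigma$ are here $\phi(w)$ and $\chi(v)$, this is precisely the within-triple double swap of (b), and conjugating back gives (b) for $G$.

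The main obstacle is bookkeeping rather than any real idea. Proposition~\ref{iso3} relabels the $\phi,\chi,\psi$ slots differently according to whether the pivot vertex is looped and whether a given vertex is adjacent to it, so verifying that each composite of local and loop complementations (i) returns to a graph isomorphic to $G$ — identical to $G$ in the case of (b) — and (ii) induces exactly the claimed permutation of $W(G)$ requires checking the loop-status configurations of $v$ and $w$ one by one. I expect to tame this by first reducing to the unlooped case via loop complementations, absorbed into the conjugating isomorphism, so that only the adjacent-twin and pendant normal forms need to be analysed in detail.
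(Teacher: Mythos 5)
Your proof is correct, and it reaches the two automorphisms by a genuinely different mechanism than the paper, although the overall skeleton (reduce to a normal form by local complementation, build the automorphism there, conjugate back) is the same. The paper normalizes \emph{both} (a) and (b) to the pendant configuration and certifies the automorphisms by a direct column/cycle-space argument: the transposition $(\chi_G(v)\,\phi_G(w))$ is an automorphism because the two columns coincide, and any product of two disjoint transpositions of the set $S=\{\phi_G(v),\psi_G(v),\chi_G(w),\psi_G(w)\}$ is an automorphism because $S$ consists of the only four columns with nonzero $v$-entry and sums to zero in the cycle space. You instead certify the automorphisms as composites of maps already known to be compatible isomorphisms: for (b), the composite $\beta_{ns}^{v}\circ\beta_{\ell}^{w}$ in the pendant normal form, which indeed returns $G$ to itself and realizes exactly the paper's permutation $(\phi_G(v)\,\psi_G(v))(\chi_G(w)\,\psi_G(w))$; for (a), the graph transposition $(v\,w)$ in an adjacent-twin normal form. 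Your route buys automatic compatibility and avoids the cycle-space computation, at the cost of the case-by-case bookkeeping you acknowledge. One point deserves explicit care in (a): your assertion that in the adjacent-twin normal form ``the parallel elements of $\tau(v)$ and $\tau(w)$ have the same $\phi/\chi/\psi$ type'' is true only for the category-3 pair (the one with $1$ in both rows). When $\{v,w\}$ is a $K_{2}$ component, categories 3 and 4 overlap and there are additional parallel pairs such as $\{\phi_G(v),\chi_G(w)\}$ that are \emph{not} type-matched and are not swapped by the graph transposition; these must be routed through your pendant recipe (local complementation at the support vertex first), after which they do become type-matched. Since your recipes are indexed by the category of the \emph{pair} rather than of the vertex pair, this is handled by your framework, but it is exactly the kind of spot where the bookkeeping could silently fail if done by vertex pair instead.
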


\begin{proof}
If case 1 of Proposition \ref{parallels} holds, then case 1 of this statement holds.

Suppose case 4 of Proposition \ref{parallels} holds, i.e., $v$ is pendant on
$w$ in $G$. For notational convenience, suppose that neither $v$ nor $w$ is
looped; then $\{\rho,\sigma\}=\{\chi_{G}(v),\phi_{G}(w)\}$. As the columns of
$IAS(G)$ corresponding to these elements are identical, the transposition
$(\chi_{G}(v)\phi_{G}(w))$ defines an automorphism of $M[IAS(G)]$.

Note that the only four columns of $M[IAS(G)]$ with nonzero entries in the $v$
row are the columns corresponding to elements of the set $S=\{\phi_{G}(v)$,
$\psi_{G}(v)$, $\chi_{G}(w)$, $\psi_{G}(w)\}$. Consequently, every element of the cycle space of
$M[IAS(G)]$ includes an even number of elements of $S$. Note also that $S$ is
an element of the cycle space, i.e., the sum of these four columns is $0$. Consequently, if an element of the cycle space
of $M[IAS(G)]$ contains precisely two elements of $S$, then we obtain a new element of the cycle space by replacing these two elements with the other two elements of $S$. It
follows that if a permutation $\pi$ of $S$ is the composition of two disjoint
transpositions, then $\pi$ defines an automorphism of $M[IAS(G)]$.

Consequently the permutation $(\phi_{G}(v)\psi_{G}(v))(\chi_{G}(w)\psi
_{G}(w))$ is an example of a compatible automorphism of $M[IAS(G)]$ that
satisfies part 2(b) of the statement, and
\[
(\chi_{G}(v)\phi_{G}(w))(\phi_{G}(v)\chi_{G}(w))(\psi_{G}(v)\psi_{G}(w))
\]
is an example of an automorphism that satisfies part 2(a).

If case 2 or case 3 of Proposition \ref{parallels} holds then as noted before
the statement of this corollary, $G$ is locally equivalent to a graph $H$ in
which case 4 of Proposition \ref{parallels} holds. Let $\beta
:M[IAS(G)]\rightarrow M[IAS(H)]$ be a compatible isomorphism induced by a local
equivalence between $G$ and $H$. We have just seen that there are
automorphisms $\beta_{a}$ and $\beta_{b}$ of $M[IAS(H)]$, which satisfy parts
2(a) and 2(b) of the statement for $H$ (respectively). It follows that the
compositions $\beta^{-1}\beta_{a}\beta$ and $\beta^{-1}\beta_{b}\beta$ satisfy
the statement for $G$.
\end{proof}

The familiar idea of parallel reduction in matroid theory is simply to delete
one of a pair of parallels. It makes little difference which of the two
parallels is deleted, because the identity map of the ground set defines an
isomorphism between the two resulting matroids. We would like to define an
analogous notion of \textquotedblleft parallel reduction\textquotedblright%
\ for isotropic matroids, but regrettably it cannot be quite so simple. There
are two complications here that do not affect ordinary matroidal parallel reduction:

1. To obtain an isotropic minor of an isotropic matroid we cannot simply
delete an element. We must remove a whole vertex triple, by deleting two
elements and contracting the third.

2. Corollary \ref{parallelcor} tells us that choosing which parallel to
delete from $M[IAS(G)]$, and which element of that vertex triple to contract,
will not affect the resulting isotropic matroid up to isomorphism. However,
such an isomorphism need not be defined by the identity map of $W(G) $. For
instance, if $v$ is unlooped and $w\in N(v)$ then as noted in connection with
Theorem \ref{isominorspec}, when we contract $\chi_{G}(v)$ the resulting
isotropic minor is isomorphic to $M[IAS(((G_{s}^{v})_{s}^{w})_{s}^{v}-v)]$,
and an isomorphism involves changing the $\phi,\chi,\psi$ designations of some
matroid elements. On the other hand, if we contract $\phi_{G}(v)$ then the
resulting isotropic minor is identical to $M[IAS(G-v)]$.

Considering these complications, we always prefer to contract a $\phi$
element; consequently we always prefer to delete a parallel that is not a
$\phi$ element. (Note that it is impossible for two parallels to both be
$\phi$ elements, because no two columns of an identity matrix are the same.)
The following definition reflects these preferences.

\begin{definition}
\label{isoparallel}Let $G$ be a looped simple graph, and suppose $\rho$ and
$\sigma$ are distinct, parallel elements of $M[IAS(G)]$ such that $\rho$ is
not a $\phi_{G}$ element. An \emph{isotropic parallel reduction }of
$M[IAS(G)]$ corresponding to the pair $\rho,\sigma$ is an isotropic minor
obtained by contracting the $\phi_{G}$ element of the vertex triple that
contains $\rho$, and deleting both $\rho$ and the third element of that vertex triple.
\end{definition}

\begin{definition}
Let $G$ be a looped simple graph. A \emph{pendant-twin reduction} of $G$ is a
graph obtained from $G$ in one of the following ways:

\begin{enumerate}
\item Delete an isolated vertex.

\item Delete a twin vertex (adjacent or nonadjacent).

\item Delete a vertex of degree 1.
\end{enumerate}
\end{definition}

Proposition \ref{parallels} immediately implies the following.

\begin{corollary}
\label{isopar}The isotropic parallel reductions of $M[IAS(G)]$ are the
isotropic matroids of pendant-twin reductions of $G$.
\end{corollary}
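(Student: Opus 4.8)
The plan is to read the correspondence off directly from the classification of parallel pairs in Proposition~\ref{parallels}, combined with the fact (recorded after Theorem~\ref{isominorspec}) that contracting $\phi_{G}(v)$ produces the isotropic matroid $M[IAS(G-v)]$. The organizing observation is that, by Definition~\ref{isoparallel}, an isotropic parallel reduction always contracts the $\phi_{G}$ element of the vertex triple $\tau_{G}(v)$ containing the non-$\phi$ parallel $\rho$; hence such a reduction is exactly $M[IAS(G-v)]$, and the whole statement reduces to checking that the vertex $v$ is isolated, a twin, or of degree~$1$, according to which case of Proposition~\ref{parallels} produced the pair $\rho,\sigma$.

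First I would establish the forward inclusion. Given an isotropic parallel reduction for a pair $\rho,\sigma$ with $\rho$ not a $\phi_{G}$ element, Proposition~\ref{parallels} places the pair in exactly one of its four cases. In case~1 the pair is $\{\phi_{G}(v),\rho\}$ for an isolated vertex $v$, so $\rho\in\tau_{G}(v)$ and the reduction is $M[IAS(G-v)]$ with $v$ isolated. In cases~2 and~3 the element $\rho$ lies in the triple $\tau_{G}(v)$ of one of a pair of nonadjacent, respectively adjacent, twins, so the reduction is $M[IAS(G-v)]$ with $v$ a twin. In case~4 the pair is $\{\phi_{G}(w),\rho\}$ with $v$ pendant on $w$; since $\phi_{G}(w)$ is a $\phi$ element, the hypothesis that $\rho$ is not a $\phi$ element forces $\rho\in\tau_{G}(v)$, so the reduction is $M[IAS(G-v)]$ with $v$ of degree~$1$. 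In every case $G-v$ is a pendant-twin reduction.

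Conversely I would check that each pendant-twin reduction arises this way. If $G-v$ deletes an isolated vertex, case~1 supplies a parallel pair inside $\tau_{G}(v)$ whose associated reduction contracts $\phi_{G}(v)$; if $G-v$ deletes a (nonadjacent or adjacent) twin, cases~2 or~3 supply a parallel pair whose non-$\phi$ member lies in $\tau_{G}(v)$; and if $G-v$ deletes a degree-$1$ vertex pendant on $w$, case~4 supplies the pair $\{\phi_{G}(w),\rho\}$ with $\rho\in\tau_{G}(v)$. In each instance the corresponding isotropic parallel reduction contracts $\phi_{G}(v)$ and so equals $M[IAS(G-v)]$.

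I do not expect a genuine obstacle: the content is entirely the bookkeeping of matching the four parallel-pair cases to the three reduction types. The only point requiring a moment's care is confirming, in each case, which element of $\tau_{G}(v)$ plays the role of the non-$\phi$ parallel $\rho$ (so that contracting the $\phi_{G}$ element of its triple is the legitimate reduction under Definition~\ref{isoparallel}); this is a direct inspection of the $\phi,\chi,\psi$ columns, using the loop status of $v$ to determine the relevant diagonal entries.
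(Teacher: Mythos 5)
Your proposal is correct and takes essentially the same route as the paper: the paper simply states that Corollary~\ref{isopar} follows immediately from Proposition~\ref{parallels}, and your case-by-case matching of the four parallel-pair types to the three pendant-twin reduction types, together with the observation that contracting $\phi_{G}(v)$ yields exactly $M[IAS(G-v)]$, is precisely the bookkeeping that remark leaves to the reader.
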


Applying Corollary \ref{isopar} repeatedly, we deduce the following.

\begin{corollary}
\label{disther}Let $G$ be a looped simple graph, with its vertices listed in
order $v_{1},\ldots,v_{n}$. Then the following statements are equivalent:

\item
\begin{enumerate}
\item There is a sequence of $n-1$ pendant-twin reductions that begins with
$G$, in which the $i^{th}$ reduction involves removing the vertex $v_{i}$.

\item There is a sequence of $n-1$ isotropic parallel reductions that begins
with $M[IAS(G)]$, in which the $i^{th}$ reduction involves removing the vertex
$v_{i}$.
\end{enumerate}
\end{corollary}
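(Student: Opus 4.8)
The plan is to prove the two statements equivalent by induction on $n=|V(G)|$, using Corollary~\ref{isopar} as the single-step engine. The base case $n=1$ is vacuous: each statement asserts the existence of an empty sequence of zero reductions. For the inductive step I would isolate the following vertex-indexed refinement of Corollary~\ref{isopar}: for a fixed vertex $v$, there is a pendant-twin reduction of $G$ that deletes $v$ if and only if there is an isotropic parallel reduction of $M[IAS(G)]$ that removes the vertex triple $\tau_{G}(v)$, and whenever either operation is available the two results are $G-v$ and $M[IAS(G-v)]$, respectively.

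Granting this refinement, the inductive step is immediate in both directions. Suppose statement~1 holds, so the first reduction deletes $v_{1}$ and produces the graph $G-v_{1}$ on the vertices $v_{2},\dots,v_{n}$. The refinement supplies a matching isotropic parallel reduction of $M[IAS(G)]$ that removes $v_{1}$ and yields $M[IAS(G-v_{1})]$; applying the induction hypothesis to $G-v_{1}$, equipped with the truncated vertex order $v_{2},\dots,v_{n}$ and the tail of the given reduction sequence, completes the chain of isotropic parallel reductions. The converse direction is symmetric: a first isotropic parallel reduction removing $v_{1}$ forces, by the refinement, a pendant-twin reduction deleting $v_{1}$, after which the induction hypothesis applied to $G-v_{1}$ finishes the argument.

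The real content, and the step I expect to require the most care, is the vertex-indexed refinement itself, since Corollary~\ref{isopar} as stated only equates the two families of resulting matroids without tracking which vertex is removed. To pin down the vertex I would use the remark following Theorem~\ref{isominorspec} that contracting $\phi_{G}(v)$ produces exactly $M[IAS(G-v)]$, so that an isotropic parallel reduction---which by Definition~\ref{isoparallel} always contracts the $\phi_{G}$ element of the triple containing the deleted parallel $\rho$---removes precisely the vertex $v$ with $\rho\in\tau_{G}(v)$. It then remains to check that $v$ admits such a parallel $\rho$ (a non-$\phi_{G}$ element of $\tau_{G}(v)$ parallel to some $\sigma$) exactly when $v$ is deletable by a pendant-twin reduction. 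This is a direct case-matching against Proposition~\ref{parallels}: case~1 there is precisely the isolated-vertex reduction, cases~2 and~3 are precisely the two twin reductions, and case~4 is precisely the degree-one (pendant) reduction, with $\rho$ in each case a non-$\phi_{G}$ element of $\tau_{G}(v)$. Since both availability conditions reduce to ``$v$ is isolated, a twin, or pendant'' and both outputs are $G-v$ (resp.\ $M[IAS(G-v)]$), the refinement follows, and with it the corollary.
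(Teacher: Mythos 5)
Your proof is correct and follows essentially the same route as the paper, whose entire argument is ``Applying Corollary~\ref{isopar} repeatedly, we deduce the following.'' Your vertex-indexed refinement---matching the four cases of Proposition~\ref{parallels} to the three pendant-twin reduction types and noting that contracting $\phi_{G}(v)$ yields exactly $M[IAS(G-v)]$---simply makes explicit the vertex-tracking that the paper leaves implicit in its one-line proof.
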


If $G$ satisfies Corollary \ref{disther} then we refer to the two sequences of
reductions as \emph{resolutions}, the first a pendant-twin resolution of $G$
and the second an isotropic parallel resolution of $M[IAS(G)]$. A connected
graph that admits such resolutions is called \emph{distance hereditary}
\cite{BM}. Corollary \ref{disther} gives us a matroidal characterization of arbitrary distance hereditary graphs:
$M[IAS(G)]$ has an isotropic parallel resolution if and only if the connected components of $G$ are all distance hereditary.

Results connected with Corollary \ref{disther} have appeared in the literature before, in different contexts. Bouchet proved an equivalent version of Corollary \ref{disther} involving isotropic systems that are ``totally decomposable'' \cite[Corollary 3.3]{Btree}. A special case was discussed by Ellis-Monaghan and Sarmiento \cite{EMS}, who proved that if a distance hereditary graph has a pendant-twin resolution without any adjacent twin reduction, then it is the interlacement graph of a medial graph of a series-parallel graph.

We should emphasize that Corollary \ref{disther} does not assert that $M[IAS(G)]$ is a series-parallel matroid in the usual sense. Indeed, if $G$ has a connected component with three or more vertices then $M[IAS(G)]$ is not regular \cite{Tnewnew}, so it is certainly not series-parallel.

\section{Forests} \label{sec:forests}

Theorem \ref{forests} follows directly from two results that are already
known. One is the equivalence between parts 1 and 2 of Theorem \ref{theory},
and the other is a theorem of Bouchet \cite{Btree}, who verified a conjecture
of Mulder by proving that locally equivalent trees are isomorphic. Bouchet's
proof of Mulder's conjecture involves Cunningham's theory of split
decompositions \cite{Cu}; we provide an alternative argument that involves
isotropic parallel reductions instead.

The first step in this alternative argument is a special case of Proposition
\ref{parallels}.

\begin{proposition}
\label{parallelfor}Let $G$ be a forest. Two non-loop elements of $M[IAS(G)]$
are parallel if and only if they fall into one of these two categories:

\begin{enumerate}
\item The two non-loop elements of the vertex triple of an isolated vertex are parallel.

\item If $v\neq w\in V(G)$ and $N_{G}(v)=\{w\}$ then $\chi_{G}(v)$ and
$\phi_{G}(w)$ are parallel.
\end{enumerate}
\end{proposition}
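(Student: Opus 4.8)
The plan is to deduce this directly from Proposition~\ref{parallels}, exploiting the fact that a forest contains no cycles --- in particular no loops, no triangles, and no $4$-cycles. The ``if'' direction requires nothing beyond inspecting columns of $IAS(G)$: the two non-loop elements of an isolated vertex's triple are identical columns, and for a leaf $v$ with $N_{G}(v)=\{w\}$ the column $\chi_{G}(v)$ equals $\phi_{G}(w)$, so both listed types are genuinely parallel. The substance is therefore the ``only if'' direction, and I would obtain it by walking through the four cases of Proposition~\ref{parallels} and showing that, for a forest, each collapses into one of the two categories above.

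First I would record that a forest has no loops (a loop is a cycle of length one), so every vertex is unlooped. This removes all the looped sub-cases of Proposition~\ref{parallels}, and in its pendant case it identifies the element of $\tau_{G}(v)$ whose column has a $0$ entry in row $v$ as $\chi_{G}(v)$, since the diagonal entry of $A(G)$ at an unlooped vertex is $0$; thus the pendant case yields precisely the pair $\{\chi_{G}(v),\phi_{G}(w)\}$ of Category~2, while the isolated-vertex case is Category~1 verbatim.

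The heart of the argument is to dispose of the two twin cases. For adjacent twins $v,w$ one has $N_{G}(v)\cup\{v\}=N_{G}(w)\cup\{w\}$; any common neighbor $x$ would produce a triangle $vwx$, which a forest forbids, so $v$ and $w$ share no neighbor and the closed-neighborhood equality forces $N_{G}(v)=\{w\}$ and $N_{G}(w)=\{v\}$. Hence $v$ is pendant on $w$, so the configuration is already the pendant case. For nonadjacent twins $v,w$ one has $N_{G}(v)=N_{G}(w)\neq\emptyset$; two distinct common neighbors $x,y$ would give a $4$-cycle $v\,x\,w\,y\,v$, which a forest forbids, so $N_{G}(v)=N_{G}(w)=\{x\}$ for a single vertex $x$, and again $v$ and $w$ are leaves pendant on $x$. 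In both twin cases the vertices involved are leaves, so the parallels they produce are accounted for by Category~2 applied to these leaves.

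I expect the twin cases to be the only real obstacle, since the exclusion of triangles and $4$-cycles is exactly what prevents the nondegenerate twin configurations from arising; everything else is routine bookkeeping with the columns of $IAS(G)$. Once the twin configurations are seen to degenerate to pendant (leaf) configurations, every parallel non-loop pair falls into Category~1 or Category~2, which completes the characterization.
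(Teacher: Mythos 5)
Your overall strategy is exactly the paper's: the published proof is the single sentence ``This follows from Proposition~\ref{parallels} because a forest has no looped vertex, and no twins of degree $>1$,'' and your expansion of it --- no loops, so the pendant case of Proposition~\ref{parallels} yields precisely the pair $\{\chi_{G}(v),\phi_{G}(w)\}$; no triangles, so adjacent twins have no common neighbor and form a $K_{2}$; no $4$-cycles, so nonadjacent twins share exactly one neighbor --- is correct graph theory and is clearly the intended argument.

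However, your final step, that the twin configurations produce parallels ``accounted for by Category~2 applied to these leaves,'' does not hold as stated, and it is worth flagging because it exposes a gap in the proposition itself. Case~2 of Proposition~\ref{parallels} applied to nonadjacent twins $v,w$ with $N_{G}(v)=N_{G}(w)=\{x\}$ yields the parallel pair $\{\chi_{G}(v),\chi_{G}(w)\}$; this pair is not of the form $\{\chi_{G}(a),\phi_{G}(b)\}$, though it lies in the same parallel class as the two Category~2 pairs $\{\chi_{G}(v),\phi_{G}(x)\}$ and $\{\chi_{G}(w),\phi_{G}(x)\}$, so it is recovered by transitivity of parallelism --- you should say so explicitly. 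The real problem is case~3 applied to adjacent twins forming a $K_{2}$ component $\{v,w\}$: the prescribed pair is the one whose columns have $1$ in both the $v$ and $w$ rows, namely $\{\psi_{G}(v),\psi_{G}(w)\}$. Both columns equal the vector with $1$ exactly in rows $v$ and $w$, so this is a genuine parallel pair of non-loop elements, and its parallel class contains no $\phi_{G}$ or $\chi_{G}$ element at all; it therefore falls outside both categories, and the ``only if'' direction of Proposition~\ref{parallelfor}, read literally, fails for any forest with a $K_{2}$ component. A careful write-up should either add this pair as a further category, or recast the statement as describing the parallel classes generated by the listed pairs; the paper's own one-line proof elides exactly the same point, so you are in good company, but your sentence asserting that the twin cases are ``accounted for by Category~2'' is the step that is false as written.
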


\begin{proof}
This follows from Proposition \ref{parallels} because a forest has no looped
vertex, and no twins of degree $>1$.
\end{proof}

Suppose now that $G$ and $H$ are forests, and $M[IAS(G)] \cong M[IAS(H)]$. Then
$\left\vert V(G)\right\vert =$ $\left\vert V(H)\right\vert $ and as stated in Theorem~\ref{theory}, there is a compatible isomorphism $\beta:M[IAS(G)]\rightarrow
M[IAS(H)]$. As $\beta$ is a compatible isomorphism, there is an associated
bijection $\beta:V(G)\rightarrow V(H)$ such that for each $v\in V(G)$, $\beta$
maps the vertex triple $\tau_{G}(v)$ to $\tau_{H}(\beta(v))$.

Notice that Theorem \ref{forests} does not require any particular connection
between a matroid isomorphism $\beta:M[IAS(G)]\rightarrow M[IAS(H)]$ and a
graph isomorphism $G \cong H$. It is convenient to prove a slightly
stronger statement, which does require a connection: namely, if $\beta
:M[IAS(G)]\rightarrow M[IAS(H)]$ is a compatible isomorphism then there is a
bijection between $V(G)$ and $V(H)$ which defines a graph isomorphism 
$G \cong H$ and also agrees with the bijection $\beta:V(G)\rightarrow V(H)$ at
every vertex where $\beta(\phi_{G}(v))=\phi_{H}(\beta(v))$. During the
argument we refer to this statement as the \emph{strong form} of Theorem
\ref{forests}.

If $\left\vert V(G)\right\vert =\left\vert V(H)\right\vert =0$ the theorem is
satisfied vacuously. The argument proceeds using induction.

If $G$ has an isolated vertex $v$ then every entry of the $\chi_{G}(v)$ column of $IAS(G)$ is $0$, so $\chi_{G}(v)$ is a loop of $M[IAS(G)]$. Then
$\beta(\chi_{G}(v))$ is a loop of $M[IAS(H)]$, so necessarily $\beta(v)$ is
isolated in $H$ and $\beta(\chi_{G}(v))=\chi_{H}(\beta(v))$. As $\beta$
respects vertex triples, $\beta(\tau_{G}(v))=\tau_{H}(\beta(v))$. Of course
the submatroids $M[IAS(G)]-\tau_{G}(v)$ and $M[IAS(H)]-\tau_{H}(\beta(v))$ are
isomorphic, as we may simply restrict $\beta$. As $v$ and $\beta(v)$ are isolated, one need only look at the matrices $IAS(G)$ and $IAS(H)$ to see that $M[IAS(G)]-\tau_{G}(v)$ and $M[IAS(H)]-\tau_{H}(\beta(v))$ are $M[IAS(G-v)]$ and $M[IAS(H-\beta(v))]$, respectively. Consequently the inductive hypothesis tells us that $G-v\cong H-\beta(v)$. As
$v$ and $\beta(v)$ are both isolated, it follows that $G\cong H$. Moreover,
this graph isomorphism is given by a bijection that agrees with the
isomorphism $G-v\cong H-\beta(v)$ given by the induction hypothesis, and it
matches $v$ to $\beta(v)$, so it satisfies the strong form of the theorem.

If $G$ has no isolated vertex it has a vertex $v$ with precisely one neighbor, $w$. Then $\chi_{G}(v)$ is parallel to $\phi_{G}(w)$ in $M[IAS(G)]$, so $\beta(\chi_{G}(v))$ is parallel to $\beta(\phi_{G}(w))$ in $M[IAS(H)]$. As $\beta$ respects vertex triples, $\beta(\chi_{G}(v))$ and $\beta(\phi_{G}(w))$ cannot fall under case 1 of Proposition \ref{parallelfor}; they must fall
under case 2. Consequently, $\beta(\{\chi_{G}(v)$, $\phi_{G}(w)\})$ is either $\{\chi_{H}(\beta(v))$, $\phi_{H}(\beta(w))\}$ or $\{\phi_{H}(\beta(v)),\chi_{H}(\beta(w))\}$.

Composing $\beta$ with the automorphism of $M[IAS(G)]$ from case 2(a)
of Corollary \ref{parallelcor} if necessary, we may presume that $\beta
(\chi_{G}(v))$ $=\chi_{H}(\beta(v))$ and $\beta(\phi_{G}(w))=$ $\phi_{H}(\beta(w))$.
Then $\chi_{H}(\beta(v))$ and $\phi_{H}(\beta(w))$ are parallel in
$M[IAS(H)]$, so it must be that $\beta(v)$ is pendant on $\beta(w)$ in $H$.
Composing with the automorphism of $M[IAS(G)]$ mentioned in case 2(b) of
Corollary \ref{parallelcor} if necessary, we may also presume that $\beta
(\phi_{G}(v))=\phi_{H}(\beta(v))$.

Then $\beta$ induces an isomorphism between the isotropic minors
\begin{gather*}
M[IAS(G-v)]=(M[IAS(G)]/\phi_{G}(v))-\chi_{G}(v)-\psi_{G}(v)\text{ and }\\
M[IAS(H-\beta(v))]=(M[IAS(H)]/\phi_{H}(\beta(v)))-\chi_{H}(\beta(v))-\psi
_{H}(\beta(v))\text{.}%
\end{gather*}
The inductive hypothesis tells us that there is a bijection between $V(G-v)$
and $V(H-\beta(v))$, which defines a graph isomorphism and agrees with the
bijection defined by $\beta$ at every vertex $x\in V(G-v)$ where $\beta
(\phi_{G}(x))=\phi_{H}(\beta(x))$. In particular, the isomorphism matches $w$
to $\beta(w)$. As $v$ and $\beta(v)$ are pendant on $w$ and $\beta(w)$
respectively, it follows that we can extend that isomorphism to an isomorphism
$G\cong H$, which matches $v$ to $\beta(v)$. Clearly this isomorphism also
satisfies the strong form of the theorem.

\subsection*{Acknowledgements}
We thank the referee for careful readings of preliminary versions of the paper, and many useful comments.

\end{document}